\newcommand{\map}[3]{#1: #2 \rightarrow #3}
\newcommand{\setdef}[2]{\left\{#1 \; | \; #2\right\}}
\newcommand{\mc}{\mathcal}
\newcommand{\onebf}{\mathbf{1}}
\newcommand{\de}{\mathrm{d}}
\newcommand{\RR}{\mathbb{R}}
\newcommand{\NN}{\mathbb{N}}
\newcommand{\ba}{\begin{array}}
\newcommand{\ea}{\end{array}}
\newcommand{\until}[1]{\{1,\dots,#1\}}
\newcommand{\ds}{\displaystyle}
\newcommand{\be}{\begin{equation}}
\newcommand{\ee}{\end{equation}}
    \newcommand{\zerobf}{\mathbf{0}}
    \newcommand{\sgn}[1]{\mathrm{sgn}\left(#1\right)}
 \newcommand{\ksmargin}[1]{\marginpar{\color{red}\footnotesize [KS]:
#1}}
\newcommand{\ymax}{y_{\mathrm{max}}}
\newcommand{\ymin}{y_{\mathrm{min}}}
  \newcommand{\supp}{\text{supp}}
\newcommand{\lambdamax}{\lambda_{\text{max}}}
\title{
Service Rate, Busy Period \& Throughput Analysis of a Horizontal Traffic Queue
}
\author{Mohammad Motie \and Ketan Savla
%
}
\institute{M. Motie \at
              Sonny Astani Department of Civil and Environmental Engineering\\
              University of Southern California \\
              \email{motiesha@usc.edu}           
           \and
           K. Savla \at
           Sonny Astani Department of Civil and Environmental Engineering\\
              University of Southern California \\
              \email{ksavla@usc.edu}  
}
\date{\today}
\begin{document}

\date{}

\maketitle
\thispagestyle{empty}

\begin{abstract}

We consider a horizontal traffic queue (HTQ) on a periodic road segment, where vehicles arrive according to a spatio-temporal Poisson process, and depart after traveling a distance that is sampled independently and identically from a spatial distribution. When inside the queue, the speed of a vehicle is proportional to a power $m >0$ of the distance to the vehicle in front. The service rate of HTQ is equal to the sum of the speeds of the vehicles, and has a complex dependency on the state (vehicle locations) of the system. We show that the service-rate increases (resp., decreases) in between arrivals and departures for $m<1$ (resp., $m>1$) case. 
For a given initial condition, we define the throughput of such a queue as the largest arrival rate under which the queue length remains bounded. We extend the busy period calculations for M/G/1 queue to our setting, including for non-empty initial condition. These calculations are used to prove that the throughput for $m=1$ case is equal to the inverse of the 
time required to travel average total distance by a solitary vehicle in the system, and also to derive a probabilistic upper bound on the queue length over a finite time horizon for the $m>1$ case. Finally, we study throughput under a release control policy, where the additional expected waiting time caused by the control policy is interpreted as the magnitude of the perturbation to the arrival process. We derive a lower bound on throughput for a given  
combination of maximum allowable perturbation, for $m<1$ and $m>1$ cases. In particular, if the allowable perturbation is sufficiently large, then this lower bound grows unbounded as $m \to 0^+$. Illustrative simulation results are also presented.

\end{abstract}
\section{Introduction}
\label{sec:introduction}
We consider a horizontal traffic queue (HTQ) on a periodic road segment, where vehicles arrive according to a spatio-temporal Poisson process, and depart the queue after traveling a distance that is sampled independently and identically from a spatial distribution. When inside the queue, the speed of a vehicle is proportional to a power $m>0$ of the distance to the vehicle in front. For a given initial condition, we define the throughput of such a queue as the largest arrival rate under which the queue length remains bounded. We provide rigorous analysis for the service rate, busy period distribution, and throughput of the proposed HTQ. 

Our motivation for studying HTQ comes from advancements in connected and autonomous vehicle technologies that allow to program individual vehicles with rules that can optimize system level performance. Within this application context, one can interpret the results of this paper as rigorously characterizing the impact of a parametric class of car-following behavior on system throughput. 


In the linear case ($m=1$), i.e., when the speed of every vehicle is proportional to the distance to the vehicle directly in front, the periodicity of the road segment implies that the sum of the speeds of the vehicles is proportional to the total length of the road segment, i.e., it is constant. This feature allows us to exploit the equivalence between workload and queue length to show that, independent of the initial condition and almost surely, the throughput is the inverse of the time required by a solitary vehicle to travel average distance. 

In the non-linear case ($m \neq 1$), the cumulative service rate of HTQ queue is constant if and only if all the inter-vehicle distances are equal. For all other inter-vehicle configurations, we show that the service rate is strictly decreasing (resp., strictly increasing) in the super-linear, i.e., $m>1$ (resp., sub-linear, i.e., $m<1$) case. The service rate exhibits an another contrasting behavior in the sub- and super-linear regimes. In the super-linear case, the service rate is maximum (resp., minimum) when all the vehicles are co-located (resp., when the inter-vehicle distances are equal), and vice-versa for the sub-linear case. Using a combination of these properties, we prove that, when the length of the road segment is at most one, the throughput in the super-linear (resp., sub-linear) case is upper (resp., lower) bounded by the throughput for the linear case.

We prove the remaining bounds on the throughput for the non-linear case as follows. The standard calculations for joint distributions of duration and number of arrivals during a busy period for M/G/1 queue are extended to the HTQ setting, including for non-empty initial conditions. 
These joint distributions are used to derive probabilistic upper bounds on queue length over finite time horizons for HTQ for the $m>1$ case. Such bounds are optimized to get lower bounds on throughput defined over finite time horizons. Simulation results show good comparison between such lower bounds and numerical estimates.
 
We also analyze throughput in the sub-linear and super-linear cases under perturbation to the arrival process, which is attributed to the additional expected waiting time induced by a release control policy that adds appropriate delay to the arrival times to ensure a desired minimum inter-vehicle distance $\triangle>0$ at the time of a vehicle joining the HTQ. Since the minimum inter-vehicle distance is non-decreasing in between arrivals and jumps, this implies an upper bound on the queue length which is inversely proportional to $\triangle$. We derive a lower bound on throughput for a given  
combination of maximum allowable perturbation. In particular, if the allowable perturbation is sufficiently large, then this lower bound grows unbounded, as $m \to 0^+$.  


Queueing models have been used to model and analyze traffic systems. The focus here has been primarily on vertical queues, under which vehicles travel at maximum speed until they hit a congestion spot where all vehicles queue on top of each other.
%
The queue length and waiting time of a minor traffic stream at an unsignalized intersection where major traffic stream has high priority is studied in \cite{Tanner:62} and \cite{Heidemann:91}. 
In \cite{Heidemann:94}, a vertical single server queue is utilized to model the queue length distribution at signalized intersections.
In \cite{Jain.Smith:97}, a state-dependent queuing system is used to model vehicular traffic flow where the service rate depends on the number of vehicles on each road link. 

 On the other hand, the \emph{horizontal traffic queue} terminology has been primarily used to study macroscopic traffic flow, e.g., see \cite{Helbing:03}. While such models capture the macroscopic relationship between traffic flow and density, a rigorous description and analysis of an underlying queue model is lacking. Indeed, to the best of our knowledge, there is no prior work on the analysis of a traffic queue model that explicitly incorporates car-following behavior. 
%
%

The proposed HTQ has an interesting connection with processor sharing (PS) queues, and this connection does not seem to have been documented before.
A characteristic feature of PS queues is that all the outstanding jobs receive service simultaneously, while keeping the total service rate of the server constant. The simplest model is where the service rate for an individual job is equal to $1/N$, where $N$ is the number of outstanding jobs. In our proposed system, one can interpret the road segment as a server simultaneously providing service to all the vehicles, with the service rate of an individual vehicle equal to its speed. This natural analogy between HTQ and PS queues, to the best of our knowledge, was reported for the first time in our recent work~\cite{Motie.Savla.CDC15}.
The $1/N$ rule applied to our setting implies that all the vehicles travel with the same speed. Clearly, such a rule, or even the general discriminatory PS disciplines, e.g., see \cite{Kleinrock:67}, are not applicable to the car following models considered in this paper. Indeed, the proposed HTQ is best described as a state-dependent PS queue. 


In the PS queue literature, the focus has been on the sojourn time and queue length distribution. For example, see \cite{Ott:84} and \cite{Yashkov:83} for M/G/1-PS queue and 
\cite{Grishechkin:94} for G/G/1-PS queue. Fluid limit analysis for PS queue is provided in \cite{Chen.ea:97} and \cite{Gromoll.Puha.ea:02}. 
However, relatively less attention has been paid to the throughput analysis of state-dependent PS queues. 
In \cite{Moyal:08,Kherani.Kumar:02,Chen.Jordan:07}, throughput analysis for state-dependent PS queues is provided, where throughput is defined as the quantity of work achieved by the server per unit of time.
%
Stability analysis for a single server queue with workload-dependent service and arrival rate is provided in \cite{Bambos.Walrand:89} and \cite{Bekker:05}. However, the dependence of service rate on the system state in the HTQ proposed in the current paper is complex, and hence none of these results are readily applicable. 

In summary, there are several novel contributions of the paper. First, we propose a novel horizontal traffic queue and place it in the context of processor-sharing queues and state-dependent queues. We establish monotonicity properties of service rates in between jumps (i.e., arrivals and departures), and derive bounds on change in service rates at jumps. 
Second, we adapt busy period calculations for M/G/1 queue to our current setup, including for non-empty initial conditions. These results allow us to provide tight results for throughput in the linear case, and probabilistic bounds on queue length over finite time horizon in the super-linear case. We also study throughput under a batch release control policy, whose effect is interpreted as a perturbation to the arrival process. We provide lower bound on the throughput for a maximum permissible perturbation for sub- and super-linear cases. In particular, we show that, for sufficiently large perturbation, this lower bound grows unbounded as $m \to 0^+$. It is interesting to compare our analytical results with simulation results, which suggest a sharp transition in the throughput from being unbounded in the sub-linear regime to being bounded in the super-linear regime. While our analytical results do not exhibit such a phase transition yet, their novelty is in providing rigorous estimates of any kind on the throughput of horizontal traffic queues under nonlinear car following models.
%

The rest of the paper is organized as follows. We conclude this section with key notations to be used throughout the paper. The setting for the proposed horizontal traffic queue and formal definition of throughput are provided in Section~\ref{sec:problem-formulation}. Section~\ref{sec:service-rate-monotonicity} contains useful properties on the dynamics in service rate in between and during jumps. Key busy period properties for the M/G/1 queue are extended to the HTQ case in Section~\ref{sec:busy-period}. Throughput analysis is reported in Section~\ref{sec:throughput-analysis}. Simulations are presented in \ref{sec:simulations}. Concluding remarks and directions for future work are presented in Section~\ref{sec:conclusions}. A few technical intermediate results are collected in the appendix. 

\subsection*{Notations}
Let $\RR$, $\RR_+$, and $\RR_{++}$ denote the set of real, non-negative real, and positive real numbers, respectively. 
Let $\NN$ be the set of natural numbers. 
If $x_1$ and $x_2$ are of the same size, then $x_1 \geq x_2$ implies element-wise inequality between $x_1$ and $x_2$. If $x_1$ and $x_2$ are of different sizes, then $x_1 \geq x_2$ implies inequality only between elements which are common to $x_1$ and $x_2$ -- such a common set of elements will be specified explicitly.  
For a set $\mathcal{J}$, let $\text{int}(\mathcal{J})$ and  $|\mathcal{J}|$ denote the interior and cardinality of $\mathcal{J}$, respectively.
Given $a \in\RR$, and $b > 0$, we let $\mod(a,b):=a-\lfloor \frac{a}{b}\rfloor b$. Let $\mc S_N^L$ be the $N-1$-simplex over $L$, i.e., $\mc S_N^L=\setdef{x \in \RR_+^N}{\sum_{i=1}^N x_i = L}$. When $L=1$, we shall use the shorthand notation $\mc S_N$. 
When referring to the set $\until{N}$, for brevity, we let the indices $i=-1$ and $i=N+1$ correspond to $i=N$ and $i=1$ respectively.
 Also, for $p, q \in \mc S_N$, we let
$D(p || q)$ denote the K-L divergence of $q$ from $p$, i.e., $D(p || q):= \sum_{i=1}^N p_i \log\left(p_i/q_i\right)$.
We also define a permutation matrix,  $P^- \in \{0,1\}^{N \times N}$,  as follows:
\begin{align*}
P^-:=\begin{bmatrix}
\zerobf_{N-1}^T & 1 \\
I_{N-1} & \zerobf_{N-1}
\end{bmatrix}
\end{align*}
where  $\zerobf_N$ and $\onebf_N$ stand for vectors of size $N$, all of whose entries are zero and one, respectively. We shall drop $N$ from $\mathbf{0}_N$ and $\mathbf{1}_N$ whenever it is clear from the context.

\section{The Horizontal Traffic Queue (HTQ) Setup}
\label{sec:problem-formulation}
Consider a periodic road segment of length $L$; without loss of generality, we assume it be a circle. Starting from an arbitrary point on the circle, we assign coordinates in $[0,L]$ to the circle in the clock-wise direction (See Figure \ref{fig:htq}). Vehicles arrive on the circle according to a spatio-temporal process: the arrival process $\{A(t),t\geq0\}$, is assumed to be a Poisson process with rate $\lambda>0$, and the arrival locations are sampled independently and identically from a spatial distribution $\varphi$ and mean value $\bar \varphi$. Without loss of generality, let the support of $\varphi$ be $\text{supp}(\varphi)=[0,\ell]$ for some $\ell\in[0,L]$. 
Upon arriving, vehicle $i$ travels distance $d_i$ in a counter-clockwise direction, after which it departs the system. 
The travel distances $\{d_i\}_{i=1}^\infty$ are sampled independently and identically from a spatial distribution $\psi$ with support $[0,R]$ and mean value $\bar \psi$. 
Let the set of $\varphi$ and $\psi$ satisfying the above conditions be denoted by $\Phi$ and $\Psi$ respectively. 
The stochastic processes for arrival times, arrival locations, and travel distances are all assumed to be independent of each other.

\begin{figure}[htb!]
 \centering
 \includegraphics[width=5cm]{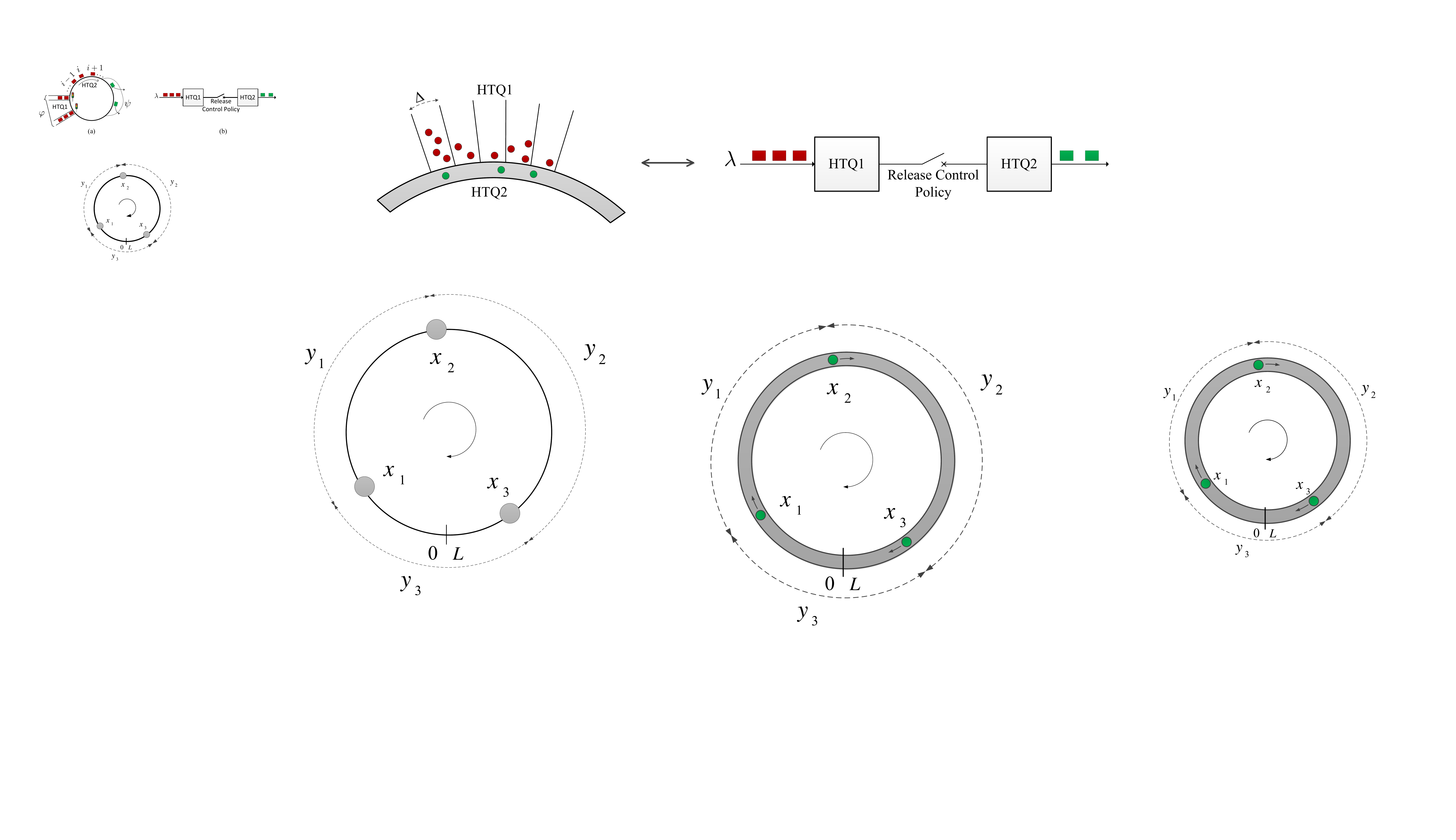} 
  \caption{Illustration of the proposed HTQ with three vehicles.}
    \label{fig:htq}
\end{figure}





\subsection{Dynamics of vehicle coordinates between jumps}
Let the time epochs corresponding to arrival and departure of vehicles be denoted as $\{\tau_1, \tau_2, \ldots\}$. We shall refer to these events succinctly as \emph{jumps}. We now formally state the dynamics under this car-following model. 
We describe the dynamics over an arbitrary time interval of the kind $[\tau_j,\tau_{j+1})$. Let $N \in \NN$ be the fixed number of vehicles in the system during this time interval. 
Define the inter-vehicle distances associated with vehicle coordinates $x \in [0,L]^N$ as follows:
\begin{equation}
\label{eq:inter-vehicle-distance-Rn}
y_i(x)  = \mod \left(x_{i+1} - x_i, L\right), \quad i \in \until{N} 
\end{equation}
where we implicitly let $x_{N+1} \equiv x_1$ (See Figure \ref{fig:htq} for an illustration). Note that the normalized inter-vehicle distances $y/L$ are probability vectors. 
When inside the queue, the speed of every vehicle is proportional to a power $m >0$ of the distance to the vehicle directly in front of it. 
We assume that this power $m > 0$ is the same for every vehicle at all times. Then, starting with $x(\tau_{j}) \in [0,L]^N$, the vehicle coordinates over $[\tau_{j},\tau_{j+1})$ are given by:
\begin{equation}
\label{eq:dynamics-moving-coordinates}
x_i(t) = \mod \left( x_i(\tau_{j}) + \int_{\tau_{j}}^t y_i^m(x(z)) \, dz, L \right), \qquad \forall \, i \in \until{N}, \quad \forall \, t \in [\tau_{j},\tau_{j+1}) \, ,
\end{equation}

\begin{remark}
It is easy to see that the clock-wise ordering of the vehicles is invariant under \eqref{eq:inter-vehicle-distance-Rn}-\eqref{eq:dynamics-moving-coordinates}. 
\end{remark}

The dynamics in inter-vehicle distances is given by:
\begin{equation}
\label{eq:inter-vehicle-distance-dynamics}
\dot{y}_i = y^m_{i+1} - y_i^m, \qquad i \in \until{N} 
\end{equation}
where we implicitly let $y_{N+1} \equiv y_1$.

\subsection{Change in vehicle coordinates during jumps}
Let $x(\tau^-_{j}) =\left(x_1(\tau^-_{j}), \ldots, x_N(\tau^-_{j})\right) \in [0,L]^N$ be the vehicle coordinates just before the jump at $\tau_{j}$. If the jump corresponds to the departure of vehicle $k \in \until{N}$, then the coordinates of the vehicles $x(\tau_{j}) =\left(x_1(\tau_{j}), \ldots, x_{N-1}(\tau_{j})\right) \in [0,L]^{N-1}$ after re-ordering due to the jump, for $i \in \until{N-1}$, are given by:
 \begin{equation*}
		x_i(\tau_{j})=\left\{\begin{array}{ll}
		\ds x_i(\tau^-_{j}) 	& i\in \until{k-1} \\[15pt]
		\ds x_{i+1}(\tau^-_{j})	& i\in \{k+1, \ldots, N-1\}\,.\end{array}\right.
	\end{equation*}

 Analogously, if the jump corresponds to arrival of a vehicle at location $z \in [0,\ell]$ in between the locations of the $k$-th and $k+1$-th vehicles at time $\tau_{j}^-$, then the coordinates of the vehicles $x(\tau_{j}) =\left(x_1(\tau_{j}), \ldots, x_{N+1}(\tau_{j})\right) \in [0,L]^{N+1}$ after re-ordering due to the jump, for $i \in \until{N+1}$, are given by:
 \begin{equation*}
 \begin{split}
 x_{k+1}(\tau_{j}) & = z \\
		x_i(\tau_{j}) & =\left\{\begin{array}{ll}
		\ds x_i(\tau^-_{j}) 	& i\in \until{k} \\[15pt]
		\ds x_{i-1}(\tau^-_{j})	& i\in \{k+2, \ldots, N+1\}\,.\end{array}\right.
		\end{split}
	\end{equation*}

\subsection{Problem statement}
Let $x_0 \in [0,L]^{n_0}$ be the initial coordinates of $n_0$ vehicles present at $t=0$. An HTQ is described by the tuple $\left(L, m, \lambda, \varphi, \psi, x_0\right)$. 
%
%
Let 
$N(t;L, m, \lambda, \varphi, \psi, x_0)$ be the corresponding queue length, i.e., the number of vehicles at time $t$ for an HTQ $\left(L, m, \lambda, \varphi, \psi, x_0\right)$. For brevity in notation, at times, we shall not show the dependence of $N$ on parameters which are clear from the context.


In this paper, our objective is to provide rigorous characterizations of the dynamics of the proposed HTQ. A key quantity that we study is throughput, defined below.  
 
\begin{definition}[Throughput of HTQ]\label{def:throughput}
Given $L > 0$, $m > 0$, $\varphi \in \Phi , \psi \in \Psi$, $x_0 \in [0,L]^{n_0}$, $n_0 \in \NN$ and $\delta \in [0,1)$, 
the throughput of HTQ is defined as:
\begin{equation}
\label{eq:throughput-def}
\lambdamax(L,m,\varphi,\psi,x_0,\delta):= \sup \left\{\lambda \geq 0: \, \Pr \left( N(t;L, m, \lambda, \varphi, \psi, x_0) < + \infty, \quad \forall t \geq 0  \right) \geq 1 - \delta  \right\}.
\end{equation}
\end{definition}


Figure~\ref{fig:throughput-simulations} shows the complex dependency of throughput on key queue parameters such as $m$ and $L$. In particular, it shows that for every $L$, $\varphi$, $\psi$, $x_0$ and $\varphi$, the throughput exhibits a phase transition from being unbounded for $m \in (0,1)$ to being bounded for $m>1$. Moreover, Figure~\ref{fig:throughput-simulations} also suggests that, for sufficiently small $L$, throughput is monotonically non-increasing in $m$, and that it is monotonically non-decreasing in $m>1$, for sufficiently large $L$. Also, it can be observed that initial condition can also affect the throughput. We now develop analytical results that match the throughput profile in Figure~\ref{fig:throughput-simulations} as closely as possible. To that purpose, we will make extensive use of novel properties of \emph{service rate} and \emph{busy period} of the proposed HTQ, which could be of independent interest.

\begin{figure}
\centering
\subfigure[]{\includegraphics[width=6cm]{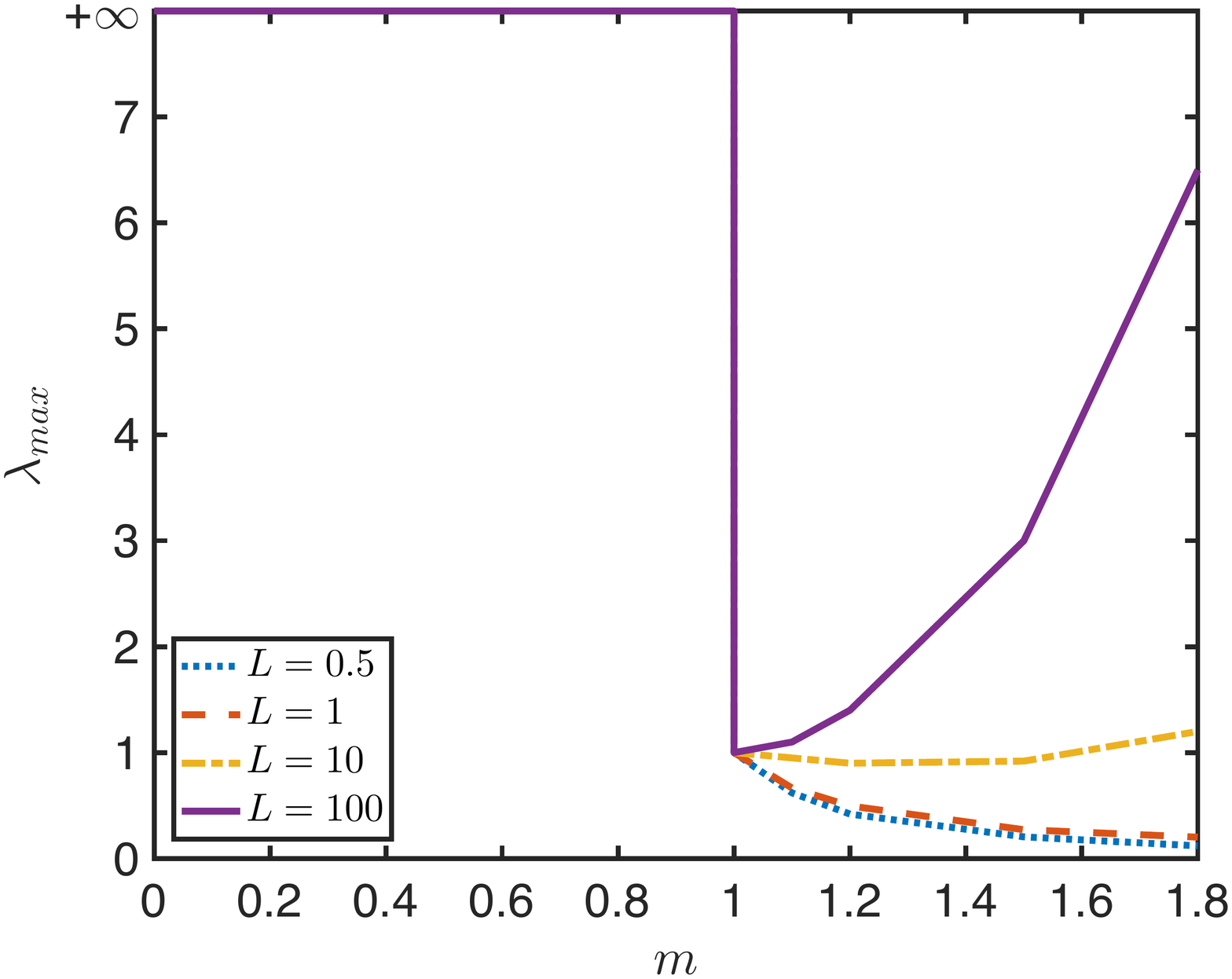}}
\centering
\hspace{0.3in}
\subfigure[]{\includegraphics[width=6cm]{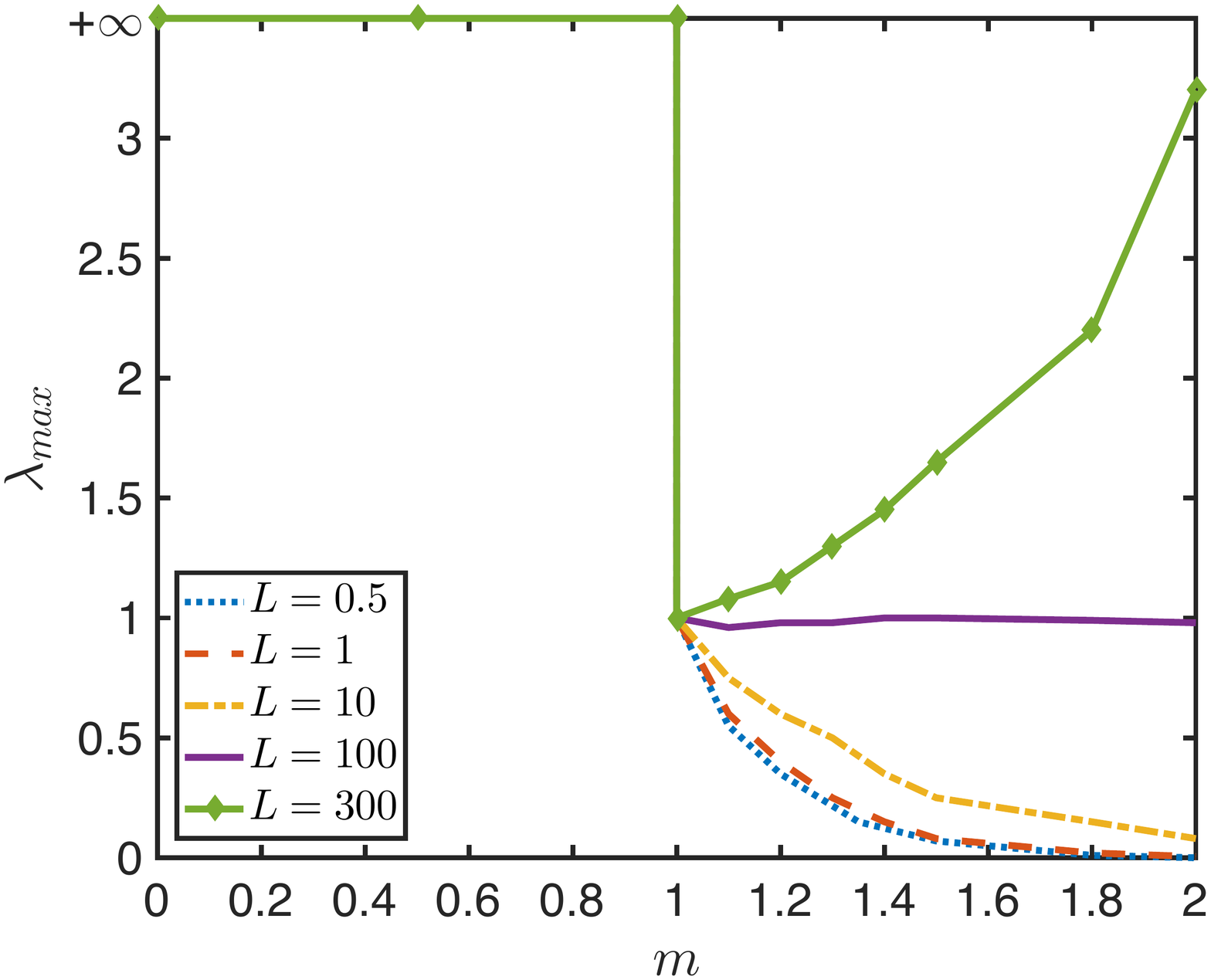}}\\
\subfigure[]{\includegraphics[width=6cm]{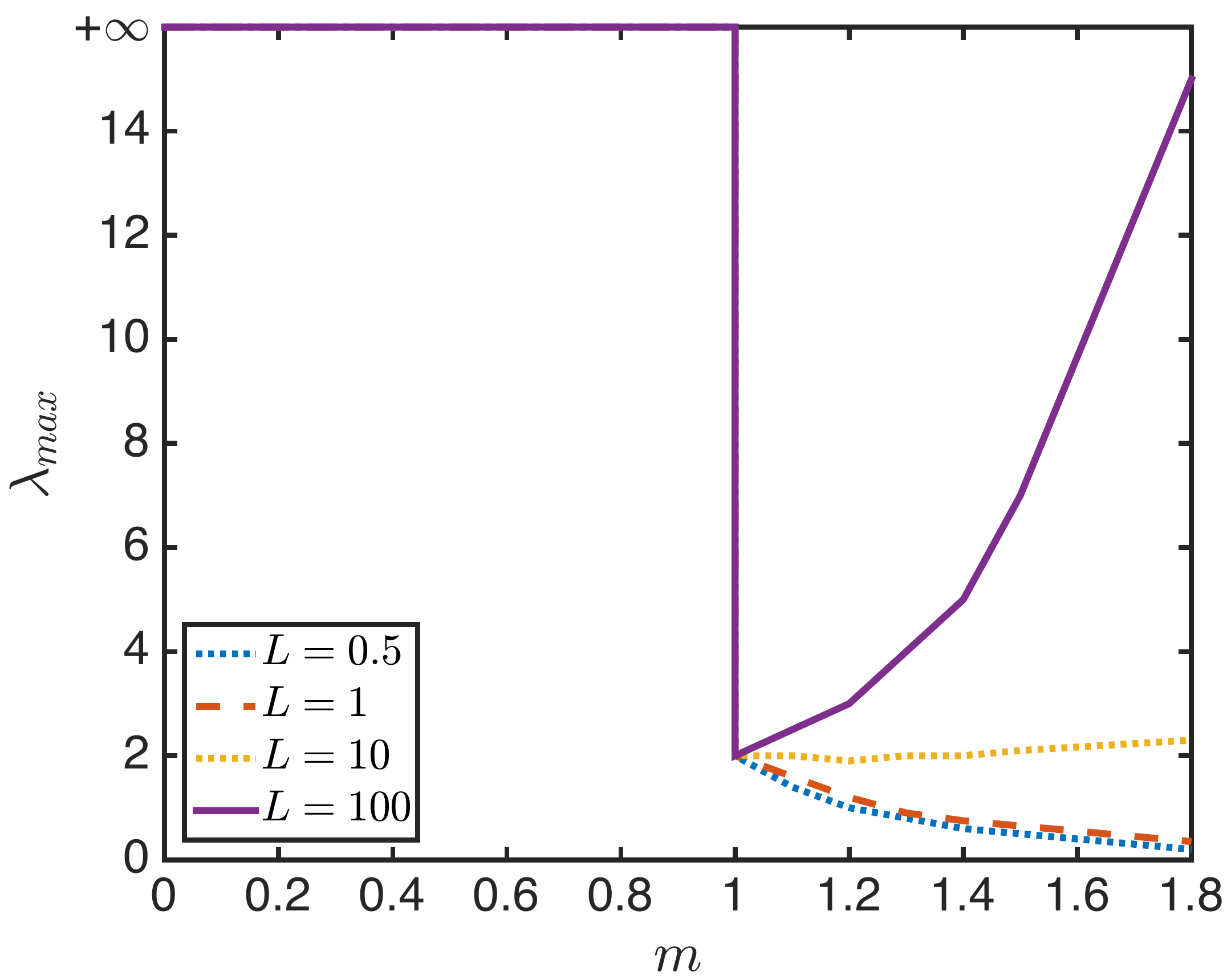}}
\centering
\hspace{0.3in}
\subfigure[]{\includegraphics[width=6cm]{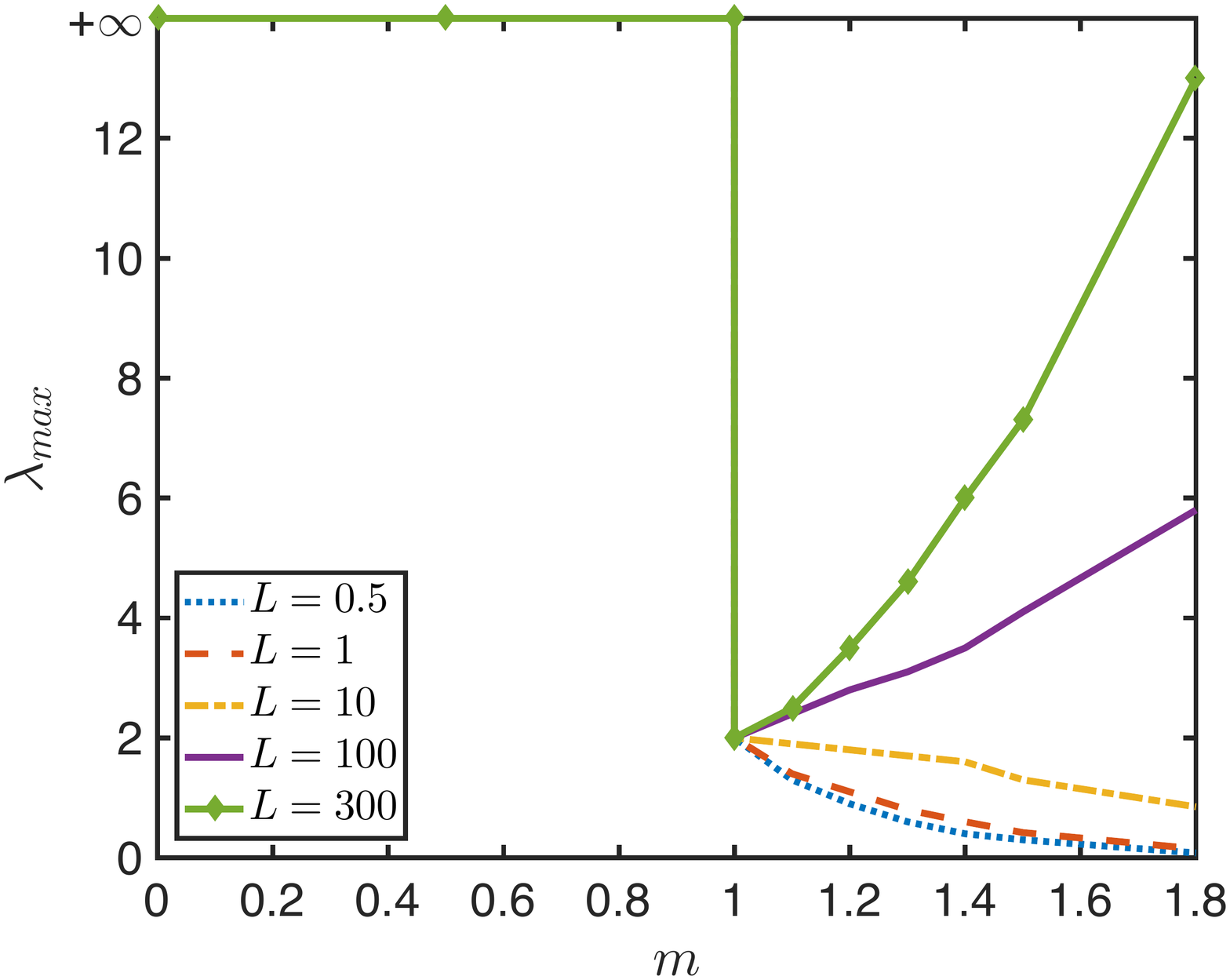}}
\caption{Throughput for various combinations of $m$, $L$, and $n_0$.  The parameters used in individual cases are:  (a) $\varphi=\delta_{0}$, $\psi = \delta_L$, and $n_0=0$ (b) $\varphi=\delta_{0}$, $\psi = \delta_L$, and $n_0=100$ (c) $\varphi=U_{[0,L]}$, $\psi = U_{[0,L]}$, and $n_0=0$ (d) $\varphi=U_{[0,L]}$, $\psi = U_{[0,L]}$, and $n_0=100$. In all the cases, the locations of initial $n_0$ vehicles were chosen at equal spacing in $[0,L]$.
 \label{fig:throughput-simulations}}
\end{figure}




\section{Service Rate Properties of the Horizontal Traffic Queue}
\label{sec:service-rate-monotonicity}
For every $y \in \mc S_N^L$, $N \in \NN$, $L>0$, we let 
$\ymin:=\min_{i \in \until{N}} y_i$, and $\ymax:=\max_{i \in \until{N}} y_i$ denote the minimum and maximum inter-vehicle distances respectively. It is easy to establish the following monotonicity properties of $\ymin$ and $\ymax$.

\begin{lemma}[Inter-vehicle Distance Monotonicity Between Jumps]
\label{lem:vehicle-distance-monotonicity}
For any  $y \in \mc S^L_N$, $N \in \NN$, $L>0$, under the dynamics in \eqref{eq:inter-vehicle-distance-dynamics}, for all $m>0$
$$
\frac{d}{dt} \ymin \geq 0 \qquad \& \qquad \qquad \frac{d}{dt} \ymax \leq 0.
$$
\end{lemma}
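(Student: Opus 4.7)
My plan is to exploit the cyclic structure of the inter-vehicle dynamics by focusing on an index that attains the extremum and showing the pointwise derivative has the right sign there, then lifting this to a statement about the (non-smooth) min/max via Dini derivatives.

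For the minimum, fix any time $t$ in an inter-jump interval, and let $k \in \until{N}$ be any index attaining $\ymin(t) = y_k(t)$. Then $y_{k+1}(t) \geq y_k(t) \geq 0$, and since $s \mapsto s^m$ is non-decreasing on $\RR_+$ for any $m>0$, I get
\begin{equation*}
\dot{y}_k(t) \;=\; y_{k+1}^m(t) - y_k^m(t) \;\geq\; 0.
\end{equation*}
Symmetrically, if $k$ attains $\ymax(t) = y_k(t)$, then $y_{k+1}(t) \leq y_k(t)$, so $\dot{y}_k(t) \leq 0$.

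To pass from these pointwise statements to monotonicity of $\ymin$ and $\ymax$, I would invoke the standard Danskin-type fact that for a finite family of $C^1$ functions $\{y_i\}$, the lower-right Dini derivative of the pointwise minimum satisfies $D_+ \ymin(t) \geq \min_{i \in I_{\min}(t)} \dot{y}_i(t)$, where $I_{\min}(t) = \{i : y_i(t) = \ymin(t)\}$, and analogously $D^+ \ymax(t) \leq \max_{i \in I_{\max}(t)} \dot{y}_i(t)$. Combined with the sign calculations above, this gives $D_+ \ymin \geq 0$ and $D^+ \ymax \leq 0$ throughout the inter-jump interval. Since each $y_i$ is $C^1$ by \eqref{eq:inter-vehicle-distance-dynamics} (the right-hand sides are continuous and Lipschitz on the relevant domain), $\ymin$ and $\ymax$ are locally Lipschitz, so the sign of their Dini derivatives controls their monotonicity, giving the claim.

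The only real subtlety is handling times when the extremum is attained by more than one index, where $\ymin$ or $\ymax$ fails to be differentiable; this is precisely what the Dini derivative formulation is designed to absorb, so I view this as a routine technicality rather than the main obstacle. If one wanted to avoid the Dini machinery entirely, an equivalent route is to argue directly that for any $k \in I_{\min}(t)$ and any $h > 0$ small enough, $\ymin(t+h) \leq y_k(t+h)$ by definition, while for $j \notin I_{\min}(t)$ the continuity of $y_j$ keeps $y_j(t+h) > y_k(t)$, so $\ymin(t+h) \geq y_k(t) + o(1)$ via a first-order Taylor expansion using $\dot{y}_k(t) \geq 0$; the same sketch with reversed inequalities handles $\ymax$.
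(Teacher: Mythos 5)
Your argument is essentially the paper's: pick an index $k$ attaining the extremum, use the dynamics $\dot y_k = y_{k+1}^m - y_k^m$ together with monotonicity of $s \mapsto s^m$ to get the sign, and conclude. The paper states this in two lines without addressing the case where the extremum is attained by several indices (and hence $\ymin$, $\ymax$ may fail to be differentiable); your Dini-derivative framing closes that small gap rigorously, but the underlying idea and computation are identical.
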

\begin{proof}
Let $\ymin(t) = y_j(t)$, i.e., the $j$-th vehicle has the minimum inter-vehicle distance at time $t\geq 0$.
Therefore, \eqref{eq:inter-vehicle-distance-dynamics} implies that $\dot{y}_{\min}(t) = \dot{y}_j(t)=y_{j+1}^m(t) - y_j^m(t)\geq 0$. One can similarly show that $\ymax$ is non-increasing.
\qed
\end{proof}

Due to the complex state-dependence of the departure process, the queue length process is difficult to analyze. We propose to study a related scalar quantity, called \emph{workload} formally defined as follows, where we recall the notations introduced in 
Section~\ref{sec:problem-formulation}.

\begin{definition}[Workload]
The workload associated with the HTQ at any instant is the sum of the distances remaining to be travelled by all the vehicles present at that instant. That is, if the current coordinates and departure coordinates of all vehicles are $x \in [0,L]^N$ and $q \in \RR_+^N$ respectively,  with $q \geq x$, then the workload is given by:
$$
w(x,q):= \sum_{i=1}^N (q_i-x_i).
$$   
\end{definition}

Since the maximum distance to be travelled by any vehicle from the time of arrival to the time of departure is upper bounded by $R$, we have the following simple relationship between workload and queue length at any time instant:
\be\label{eq:workload-upperbound}
w(t) \leq N(t)\, R \, , \qquad \forall \, t \geq 0 \, .
\ee
An implication of \eqref{eq:workload-upperbound} is that unbounded workload implies unbounded queue length in our setting. We shall use this relationship to establish an upper bound on the throughput. 
However, a finite workload does not necessarily imply finite queue length. In order to see this, consider the state of the queue with $N$ vehicles, all of whom have distance $1/N$ remaining to be travelled. Therefore, the workload at this instant is $1/N \times N= 1$, which is independent of $N$. 

When the workload is positive, its rate of decrease is equal to \emph{service rate} in between jumps, defined next.

\begin{definition}[Service Rate]
When the HTQ is not idle, its instantaneous service rate is equal to the sum of the speeds of the vehicles present in the system at that time instant, i.e., $s(x)=\sum_{i=1}^N y_i^m(x)$.
\end{definition}

Since the service rate depends only on the inter-vehicle distances, we shall alternately denote it as $s(y)$. For $m=1$, $s(y)=\sum_{i=1}^N y_i \equiv L$, i.e., the service rate is independent of the state of the system, and is constant in between and during jumps. This property does not hold true in the nonlinear ($m \neq 1$) case. Nevertheless, one can prove interesting properties for the service rate dynamics. We start by deriving bounds on service rate in between jumps. 

\begin{lemma}[Bounds on Service Rates]
\label{lem:service-rate-bounds}
For any $y \in \mc S_N^L$, $N \in \NN$, $L>0$, under the dynamics in \eqref{eq:inter-vehicle-distance-dynamics},
\begin{enumerate}
\item $L^m N^{1-m} \leq s(y) \leq L^m$ if $m > 1$;
\item $L^m \leq s(y) \leq L^m N^{1-m}$ if $m \in (0,1)$.
\end{enumerate}
\end{lemma}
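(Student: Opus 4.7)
The claim is purely a state-space inequality: for any $y\in\mathcal S_N^L$ (so $y_i\ge0$ and $\sum_i y_i=L$), the quantity $s(y)=\sum_{i=1}^N y_i^m$ is sandwiched between $L^m$ and $L^m N^{1-m}$, with the ordering of the two bounds flipping depending on whether $m>1$ or $m<1$. The dynamics enter only to guarantee that $\mathcal S_N^L$ is invariant, which is immediate from summing \eqref{eq:inter-vehicle-distance-dynamics} telescopically. So the real work is two one-line inequalities in each regime.

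My plan is to get one bound from convexity/concavity and the other from the pointwise estimate $y_i\le L$. For the convexity bound, I would apply Jensen's inequality (equivalently, the power-mean inequality) to the probability measure that places mass $1/N$ on each $y_i$. When $m>1$, the map $t\mapsto t^m$ is convex, so $\frac1N\sum_i y_i^m \ge \bigl(\frac1N\sum_i y_i\bigr)^m=(L/N)^m$, which rearranges to $s(y)\ge L^m N^{1-m}$. When $m\in(0,1)$, the map is concave, and the same application of Jensen's gives the reverse inequality $s(y)\le L^m N^{1-m}$. Both bounds are attained by the uniform configuration $y_i\equiv L/N$.

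For the remaining bounds I would exploit that each $y_i\in[0,L]$. Write $y_i^m = y_i\cdot y_i^{m-1}$. If $m>1$, then $y_i^{m-1}\le L^{m-1}$ for every $i$, so $s(y)=\sum_i y_i\cdot y_i^{m-1}\le L^{m-1}\sum_i y_i = L^m$. If $m\in(0,1)$, then $t\mapsto t^{m-1}$ is decreasing on $(0,L]$, so $y_i^{m-1}\ge L^{m-1}$ (with the convention that the $y_i=0$ terms contribute $0$ in both sides, which is consistent with $0^m=0$), giving $s(y)\ge L^{m-1}\sum_i y_i = L^m$. Both of these bounds are attained in the limit where all the mass is concentrated on a single inter-vehicle gap.

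No step here is a real obstacle; the only mild care is the $y_i=0$ boundary in the concentration argument, which is harmless since $0^m=0$ for $m>0$. The proof is essentially two applications of Jensen and two applications of the trivial bound $y_i\le L$, and I would present the four lines in a compact case split on the sign of $m-1$.
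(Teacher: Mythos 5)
Your proposal is correct and follows essentially the same two-pronged argument as the paper: one bound comes from convexity/concavity of $t\mapsto t^m$ (the paper phrases this as the extremum of $\sum_i z_i^m$ over $\mathcal S_N$ being attained at $z=\onebf/N$, which is exactly your Jensen application), and the other comes from the pointwise estimate $y_i\le L$ (the paper writes it as $(y_i/L)^m \lessgtr y_i/L$ after normalizing, which is algebraically identical to your $y_i^{m-1}\lessgtr L^{m-1}$ factorization). Your handling of the $y_i=0$ boundary is a small but correct piece of care that the paper leaves implicit.
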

\begin{proof}
Normalizing the inter-vehicular distances by $L$, the service rate can be rewritten as 
\begin{equation}
\label{eq:service-rate-normalized}
s(y)=L^m \sum_{i=1}^N \left(\frac{y_i}{L}\right)^m. 
\end{equation}
Therefore, for $m > 1$, $s(y) \leq L^m \sum_{i=1}^N \frac{y_i}{L}=L^m$. One can similarly show that, for $m \in (0,1)$, $s(y) \geq L^m$. In order to prove the remaining bounds, we note that $\sum_{i=1}^N z_i^m$ is strictly convex in $z=[z_1, \ldots, z_N]$ for $m > 1$, and that the minimum of $\sum_{i=1}^N z_i^m$ over $z \in \mc S_N$ occurs at $z= \onebf/N$, and is equal to $N^{1-m}$. Similarly, for $m \in (0,1)$, $\sum_{i=1}^N z_i^m$ is strictly concave in $z$, and its maximum over $z \in \mc S_N$ occurs at $z= \onebf/N$, and is equal to $N^{1-m}$. Combining these facts with \eqref{eq:service-rate-normalized}, and noting that $y/L \in \mc S_N$, gives the lemma.
\qed
\end{proof}

\begin{lemma}[Service Rate Monotonicity Between Jumps]
\label{lem:service-rate-dynamics-between-jumps}
For any  $y \in \mc S^L_N$, $N \in \NN$, $L>0$, under the dynamics in \eqref{eq:inter-vehicle-distance-dynamics}, 
$$
\frac{d}{dt} s(y) \leq 0 \quad \text{      if } m > 1 \qquad \& \qquad \frac{d}{dt} s(y) \geq 0\quad \text{ if } m \in (0,1) \, ,
$$
where the equality holds true if and only if $y=\frac{L}{N}\onebf$.
\end{lemma}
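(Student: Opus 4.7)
The plan is to differentiate $s(y)=\sum_{i=1}^N y_i^m$ along the dynamics in \eqref{eq:inter-vehicle-distance-dynamics} and recognize the resulting expression as the deviation of a Riemann-type sum from a cyclic telescoping sum that vanishes identically. Applying \eqref{eq:inter-vehicle-distance-dynamics} gives
\[
\dot s(y)=\sum_{i=1}^N m\, y_i^{m-1}\dot y_i = m\sum_{i=1}^N y_i^{m-1}\bigl(y_{i+1}^m-y_i^m\bigr),
\]
with the index taken cyclically. Setting $u_i:=y_i^m$ and $\alpha:=(m-1)/m$ rewrites this compactly as $\dot s(y)=m\sum_{i=1}^N u_i^{\alpha}(u_{i+1}-u_i)$.

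Next, I would introduce an antiderivative $F$ of $u\mapsto u^{\alpha}$: take $F(u):=u^{\alpha+1}/(\alpha+1)$ whenever $\alpha\neq -1$, and $F(u):=\log u$ in the borderline case $m=1/2$ (where $\alpha=-1$). For $m>1$ one has $\alpha\in(0,1)$, so $F$ is strictly convex on $(0,\infty)$; for $m\in(0,1)$ one has $\alpha<0$, so $F$ is strictly concave. The tangent-line inequality therefore yields
\[
F(u_{i+1})-F(u_i)\;\geq\;u_i^{\alpha}(u_{i+1}-u_i)\qquad \text{if }m>1,
\]
with the reversed inequality if $m\in(0,1)$, and equality in either case iff $u_{i+1}=u_i$.

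Summing this inequality over $i\in\until N$ with the cyclic convention $u_{N+1}\equiv u_1$ makes the left-hand side telescope to zero. For $m>1$ this yields $0\geq\sum_i u_i^{\alpha}(u_{i+1}-u_i)=\dot s(y)/m$, i.e.\ $\dot s(y)\leq 0$; symmetrically, $\dot s(y)\geq 0$ for $m\in(0,1)$. Equality in the summed form forces equality in every term, i.e.\ $u_i=u_{i+1}$ for all $i$, which, since $u\mapsto u^{1/m}$ is strictly monotone on $(0,\infty)$, is equivalent to all $y_i$ being equal and hence to $y=(L/N)\onebf$, as claimed.

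The only real obstacle is mild bookkeeping: verifying that the cyclic indexing truly causes the telescoping sum to vanish (immediate from $y_{N+1}\equiv y_1$), and switching to the logarithmic antiderivative in the edge case $m=1/2$. Both are routine, and the whole argument collapses to a one-line convexity inequality once the substitution $u_i=y_i^m$ is in place.
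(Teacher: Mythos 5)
Your proof is correct, and it takes a genuinely different route from the paper's. After computing $\dot s(y) = m\sum_{i} y_i^{m-1}\bigl(y_{i+1}^m - y_i^m\bigr)$, the paper invokes the appendix Lemma~\ref{lem:appendix-general-summation}, which signs the general cyclic sum $\sum_i h(y_i)\bigl(g(y_{i+1}) - g(y_i)\bigr)$ for strictly monotone $h,g$ by rewriting it as the integral of a signed step function and pairing up intervals. You instead substitute $u_i = y_i^m$, recognize $y_i^{m-1} = u_i^{(m-1)/m}$ as the derivative $F'(u_i)$ of an antiderivative $F$ (a power, or $\log$ when $m=1/2$) of definite convexity, apply the tangent-line inequality termwise, and kill the left-hand side via the vanishing cyclic telescoping sum $\sum_i\bigl(F(u_{i+1}) - F(u_i)\bigr) = 0$; the equality case then falls out immediately from strict convexity/concavity. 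This is more economical and self-contained for this lemma. The tradeoff is that the paper's appendix lemma is a reusable tool: it is applied again in Lemma~\ref{lem:service-rate-dot-lower-bound} to sign the two sums appearing in $\frac{\partial^2}{\partial m^2} f(y,m)|_{m=1}$, where the relevant $h$ is $\log^2 z$ or $z + z\log z$ and your substitution-plus-convexity device would have to be reengineered case by case; so the paper trades a less transparent argument here for reuse downstream. One small shared caveat: when $m<1$ and some $y_i=0$, the factor $y_i^{m-1}$ (equivalently $F'(u_i)$) is singular, so both your argument and the paper's implicitly concern $y$ in the relative interior of $\mc S_N^L$ — a pre-existing quirk of the statement, not a gap specific to your proof.
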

\begin{proof}
The time derivative of service rate is given by:
\begin{align}
\frac{d}{dt} s(y) & = \frac{d}{dt} \sum_{i=1}^N y_i^m =  m \sum_{i=1}^N y_i^{m-1} \dot{y}_i \nonumber \\ 
\label{eq:service-rate-derivative}
& =  m \sum_{i=1}^N y_i^{m-1} \left( y_{i+1}^m - y_i^m\right) 
\end{align}
where the second equality follows by \eqref{eq:inter-vehicle-distance-dynamics}. The result then follows by application of Lemma~\ref{lem:appendix-general-summation}, and by noting that $g(z)=z^m$ is a strictly increasing function for all $m>0$, and $h(z)=z^{m-1}$ is strictly decreasing if $m \in (0,1)$, and strictly increasing if $m>1$.
\qed
\end{proof}

The following lemma quantifies the change in service rate due to departure of a vehicle. 

\begin{lemma}[Change in Service Rate at Departures]
\label{lem:service-rate-jumps}
Consider the departure of a vehicle that changes inter-vehicle distances from $y \in \mc S_N^L$ to $y^- \in \mc S_{N-1}^L$, for some $N \in \NN \setminus \{1\}$, $L > 0$. If $y_1 \geq 0$ and $y_2 \geq 0$ denote the inter-vehicle distances behind and in front of the departing vehicle respectively, at the moment of departure, then the change in service rate due to the departure satisfies the following bounds:
\begin{enumerate}
\item if $m > 1$, then $0 \leq s(y^-) - s(y) \leq (y_1+y_2)^m \left(1-2^{1-m} \right)$;
\item if $m \in (0,1)$, then $0 \leq s(y) - s(y^-) \leq \min\{y_1^m,y_2^m\}$.
\end{enumerate}
\end{lemma}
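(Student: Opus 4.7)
The plan is to reduce the lemma to elementary scalar inequalities for the map $t \mapsto t^m$. First I would note that, since the departure of a vehicle merges its two adjacent gaps $y_1$ and $y_2$ into a single gap of size $y_1 + y_2$ while leaving all other gaps unchanged, the definition of service rate immediately yields
\begin{equation*}
s(y^-) - s(y) \;=\; (y_1 + y_2)^m - y_1^m - y_2^m.
\end{equation*}
Both cases of the lemma then become claims about this scalar expression.

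For the super-linear case $m > 1$, the lower bound $(y_1+y_2)^m \geq y_1^m + y_2^m$ follows from superadditivity of $t \mapsto t^m$ on $\RR_+$ (one quick justification: $f(a) := (a+b)^m - a^m - b^m$ satisfies $f(0) = 0$ and $f'(a) = m[(a+b)^{m-1} - a^{m-1}] \geq 0$). For the upper bound, I would factor out $(y_1+y_2)^m$ and set $t := y_1/(y_1+y_2) \in [0,1]$; the desired inequality becomes $t^m + (1-t)^m \geq 2^{1-m}$. Since $t \mapsto t^m + (1-t)^m$ is strictly convex on $[0,1]$ (being the sum of strictly convex functions for $m > 1$) and symmetric about $t = 1/2$, it attains its minimum at $t = 1/2$, where the value equals $2 \cdot (1/2)^m = 2^{1-m}$, giving exactly the required bound.

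For the sub-linear case $m \in (0,1)$, the lower bound on $s(y) - s(y^-)$ follows from subadditivity of $t \mapsto t^m$, i.e., $(y_1+y_2)^m \leq y_1^m + y_2^m$, which is a consequence of concavity together with $0^m = 0$. For the upper bound, I would assume without loss of generality $y_1 \leq y_2$, so that $\min\{y_1^m, y_2^m\} = y_1^m$; the claim then reduces to $y_2^m \leq (y_1+y_2)^m$, which is immediate from the monotonicity of $t \mapsto t^m$ for $m > 0$. The symmetric case $y_2 \leq y_1$ is identical.

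I do not anticipate any genuine obstacles: the hardest ingredient is the minimum-at-$1/2$ argument for $t^m + (1-t)^m$ when $m > 1$, and even that is a one-line convexity/symmetry observation. The edge cases $y_1 = 0$ or $y_2 = 0$ require no special treatment, since then both sides of each inequality collapse to zero.
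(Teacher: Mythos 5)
Your proposal is correct and follows essentially the same route as the paper's proof: reduce to the scalar identity $s(y^-)-s(y)=(y_1+y_2)^m-y_1^m-y_2^m$, use super-/sub-additivity of $t\mapsto t^m$ for the sign, and for the $m>1$ upper bound normalize and appeal to the minimum of $z^m+(1-z)^m$ at $z=1/2$, while the $m\in(0,1)$ upper bound comes from monotonicity of the power map. The paper derives super-/sub-additivity via $a^m\le a$ (resp.\ $\ge a$) for $a\in[0,1]$ rather than a derivative argument, and obtains the $\min$ by proving both $\le y_1^m$ and $\le y_2^m$ rather than by a WLOG reduction, but these are cosmetic variations of the same argument.
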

\begin{proof}
If $m>1$, then $\left(\frac{y_1}{y_1+y_2}\right)^m + \left(\frac{y_2}{y_1+y_2}\right)^m \leq \frac{y_1}{y_1+y_2} + \frac{y_2}{y_1+y_2} = 1$, i.e., $s(y^-)-s(y) = (y_1 + y_2)^m - y_1^m - y_2^m \geq 0$. One can similarly show that $s(y)-s(y^-) \geq 0$ if $m \in (0,1)$.

In order to show the upper bound on $s(y^-)-s(y)$ for $m>1$, we note that the minimum value of $z^m + (1-z)^m$ over $z \in [0,1]$ for $m > 1$ is $2^{1-m}$, and it occurs at $z=1/2$. Therefore, 
\begin{align*}
s(y^-)-s(y) = (y_1 + y_2)^m - y_1^m - y_2^m & = (y_1 + y_2)^m \left(1 - \left(\frac{y_1}{y_1+y_2}\right)^m - \left(\frac{y_2}{y_1+y_2}\right)^m \right) \\
& \leq (y_1+y_2)^m \left(1-2^{1-m} \right)
\end{align*}

The upper bound on $s(y)-s(y^-)$ for $m \in (0,1)$ can be proven as follows. Since $y_1^m \leq (y_1 + y_2)^m$, $s(y) - s(y^-) = y_1^m + y_2 ^m - (y_1 + y_2)^m \leq y_2^m$.  
Similarly, $s(y)-s(y^-) \leq y_1^m$. Combining, we get $s(y)-s(y^-) \leq \min \{y_1^m, y_2^m\}$. Note that, in proving this, we nowhere used the fact that $m \in (0,1)$. However, this bound is useful only for $m \in (0,1)$. 
\qed
\end{proof}

\begin{remark}[Change in Service Rate at Arrivals]
\label{rem:service-rate-jump-arrival}
The bounds derived in Lemma~\ref{lem:service-rate-jumps} can be trivially used to prove the following bounds for change in service rate at arrivals:
\begin{enumerate}
\item if $m > 1$, then $0 \leq s(y) - s(y^+) \leq (y_1+y_2)^m \left(1-2^{1-m} \right)$;
\item if $m \in (0,1)$, then $0 \leq s(y^+) - s(y) \leq \min\{y_1^m,y_2^m\}$,
\end{enumerate}
where $y_1$ and $y_2$ are the inter-vehicle distances behind and in front of the arriving vehicle respectively, at the moment of arrival.
\end{remark}


The following lemma will facilitate generalization of Lemma~\ref{lem:service-rate-dynamics-between-jumps}. In preparation for the lemma, let $f(y,m):=m \sum_{i=1}^N y_i^{m-1} \left( y_{i+1}^m - y_i^m\right)$ be the time derivative of service rate, as given in \eqref{eq:service-rate-derivative}.

\begin{lemma}
\label{lem:service-rate-dot-lower-bound}
For all $y \in \text{int}(\mc S_N^L)$, $N \in \NN \setminus \{1\}$, $L > 0$:
\begin{equation}
\label{eq:service-rate-der-m}
\frac{\partial}{\partial m} f(y,m)|_{m=1} = - L D\left(\frac{y}{L} || P^- \frac{y}{L}\right) \leq 0
\end{equation}
Additionally, if $L < e^{-2}$, then
\begin{equation}
\label{eq:service-rate-twice-der-m}
\frac{\partial^2}{\partial m^2} f(y,m)|_{m=1} \geq 0
\end{equation}
Moreover, equality holds true in \eqref{eq:service-rate-der-m} and \eqref{eq:service-rate-twice-der-m} if and only if $y = \frac{L}{N}\onebf$.
\end{lemma}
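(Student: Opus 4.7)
The plan is to differentiate $f(y,m) = m\sum_i y_i^{m-1}(y_{i+1}^m - y_i^m)$ twice with respect to $m$, and then manipulate the resulting cyclic sums by re-indexing so as to expose the Kullback--Leibler divergence structure predicted by the statement.

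First I would split $f(y,m) = m(G_1(m) - G_2(m))$ with $G_1(m) := \sum_i y_i^{m-1} y_{i+1}^m$ and $G_2(m) := \sum_i y_i^{2m-1}$; both are smooth in $m$ and satisfy $G_1(1)=G_2(1)=L$, so $f(y,1)=0$ and the product rule gives $\partial_m f|_{m=1} = G_1'(1) - G_2'(1)$. Differentiating term by term using $\partial_m y_i^{\alpha(m)} = \alpha'(m)(\ln y_i) y_i^{\alpha(m)}$, and then cyclically re-indexing $\sum_i y_{i+1}\ln y_{i+1} = \sum_j y_j \ln y_j$ together with $\sum_i y_{i+1} \ln y_i = \sum_j y_j \ln y_{j-1}$, the difference collapses to $\sum_j y_j \ln(y_{j-1}/y_j) = -L\, D(y/L \,||\, P^- y/L)$, yielding \eqref{eq:service-rate-der-m}; the KL divergence vanishes exactly when $y_{j-1}=y_j$ for all $j$, i.e., $y = (L/N)\onebf$.

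For the second derivative, Leibniz gives $\partial_m^2 f|_{m=1} = 2(G_1'(1)-G_2'(1)) + (G_1''(1)-G_2''(1))$, where $G_1''(1) = \sum_i y_{i+1}(\ln y_i + \ln y_{i+1})^2$ and $G_2''(1) = 4\sum_i y_i(\ln y_i)^2$. After the same cyclic re-index and the elementary identity $(u+v)^2 - (2v)^2 = (u-v)(u+3v)$, I will collect the compact form
\begin{equation*}
\partial_m^2 f|_{m=1} = \sum_{j=1}^N y_j\,(a_{j-1} - a_j)(2 + a_{j-1} + 3 a_j), \qquad a_j := \ln y_j.
\end{equation*}
Writing $a_j = \ln L + \ln p_j$ with $p_j := y_j/L$, the bracket becomes $-C + (\ln p_{j-1} + 3\ln p_j)$, where $C := -(2 + 4\ln L) > 6$ precisely when $L < e^{-2}$. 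Separating the constant part and using $\sum_j y_j(a_{j-1}-a_j) = -L\,D(p \,||\, P^- p)$, the expression rewrites as $\partial_m^2 f|_{m=1} = L\,C\,D(p \,||\, P^- p) + L\,\Phi(p)$, with $\Phi(p) := \sum_j p_j(b_{j-1}-b_j)(b_{j-1}+3b_j)$ and $b_j := \ln p_j$; the first term is manifestly non-negative.

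The hard part will be to bound $\Phi(p)$ from below by $-6\,D(p \,||\, P^- p)$, at which point the slack $C - 6 > 0$ afforded by $L < e^{-2}$ closes the argument. My plan is to exploit the algebraic identity $(b_{j-1}-b_j)(b_{j-1}+3b_j) = (b_{j-1}+b_j)^2 - 4 b_j^2$ to rewrite $\Phi(p) = \sum_j p_j(b_{j-1}+b_j)^2 - 4\sum_j p_j b_j^2$; apply the Gibbs inequality $\sum_j p_j \ln p_{j-1} \leq \sum_j p_j \ln p_j$ (cross-entropy dominates entropy) together with Jensen on the cyclic pairing $j \mapsto b_{j-1} + b_j$; and control the bilinear cross term $\sum_j p_j b_{j-1} b_j$ by Cauchy--Schwarz against the second moments $\sum_j p_j b_j^2$ and $\sum_j p_{j+1} b_j^2$. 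These inequalities all saturate simultaneously only when $b_{j-1}=b_j$ for every $j$, which together with $\sum_j p_j = 1$ forces $p = \onebf/N$ and hence $y = (L/N)\onebf$, matching the stated equality case in \eqref{eq:service-rate-twice-der-m}.
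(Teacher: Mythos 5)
Your first-derivative calculation, the rewriting of the second derivative as $\sum_j y_j(a_{j-1}-a_j)(2+a_{j-1}+3a_j)$, and the decomposition
\[
\partial_m^2 f|_{m=1} \;=\; L\,C\,D(p\,\|\,P^-p) + L\,\Phi(p),\qquad C=-(2+4\ln L),\ \Phi(p)=\sum_j p_j(b_{j-1}-b_j)(b_{j-1}+3b_j),
\]
are all algebraically correct (I checked them against the paper's formula by re-indexing the cyclic sums). The threshold $C\geq 6 \Leftrightarrow L\leq e^{-2}$ is also right, so your argument would indeed close if you could establish $\Phi(p)\geq -6D(p\,\|\,P^-p)$.

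That is exactly where there is a real gap. The toolbox you propose for it — Gibbs, Jensen, Cauchy–Schwarz — points in the wrong direction. Writing $A=\sum_j p_j b_j^2$, $B=\sum_j p_j b_{j-1}^2$, $X=\sum_j p_j b_{j-1}b_j$, your identity gives $\Phi = B+2X-3A$, and the inequality you need reduces (after subtracting $6D$) to controlling a term that behaves like $X-A$. Cauchy–Schwarz yields only $X\geq -\sqrt{AB}$ (and since all $b_j<0$ you trivially have $X>0$), but the quantity that must be non-negative is $X-A=\sum_j p_j b_j(b_{j-1}-b_j)$, a one-sided, "reverse Cauchy–Schwarz''-type comparison of a cross term with a second moment. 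Jensen applied to $j\mapsto b_{j-1}+b_j$ likewise gives a lower bound on $\sum_j p_j(b_{j-1}+b_j)^2$, which is the wrong side as well, and Gibbs only tells you $D\geq 0$, which cancels harmlessly against itself in your estimate. None of these standard inequalities delivers the required rearrangement-type bound.

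The inequality $\Phi + 6D \geq 0$ is nevertheless true, but the mechanism that proves it is precisely the cyclic-sum lemma the paper relies on (Lemma~\ref{lem:appendix-general-summation}), applied to the normalized variables. Concretely, completing the square in your bracket gives $(b_{j-1}-b_j)(b_{j-1}+3b_j-6) = (b_{j-1}+b_j-3)^2-(2b_j-3)^2$; with $c_j:=\ln p_j-\tfrac32$ this becomes $\Phi+6D=\sum_j p_j(c_{j-1}-c_j)(c_{j-1}+3c_j)=(B'-A')+2(X'-A')$, where $A'=\sum_j p_jc_j^2$, $B'=\sum_j p_{j+1}c_j^2$, $X'=\sum_j p_jc_{j-1}c_j$. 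Each of $B'-A'$ and $X'-A'$ is a cyclic sum of the form $\sum_i h(p_i)\bigl(g(p_{i+1})-g(p_i)\bigr)$ with $h$ increasing and $g$ decreasing on $(0,1)$ (take $h(z)=(\ln z-\tfrac32)^2$, $g(z)=z$ for the first, and $h(z)=\ln z-\tfrac32$, $g(z)=z\ln z-\tfrac32 z$ for the second; note $g'(z)=\ln z -\tfrac12<0$ on $(0,1)$). By Lemma~\ref{lem:appendix-general-summation} both are non-negative, with equality iff $p=\onebf/N$. So your route is salvageable — but only by invoking the paper's rearrangement lemma, not by the Gibbs/Jensen/Cauchy–Schwarz sketch as stated. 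The paper's proof is shorter because it applies that same lemma directly to the two cyclic sums $\sum_i \log^2 y_i\,(y_{i+1}-y_i)$ and $\sum_i \log y_i\,\bigl(y_{i+1}\log y_{i+1}+y_{i+1}-y_i\log y_i -y_i\bigr)$, using $L<e^{-2}$ to make $z+z\log z$ decreasing on the relevant range, rather than first separating out the $\ln L$ dependence.

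Finally, a minor point: your constant should satisfy $C\geq 6$, not necessarily $C>6$, for the lower bound $L(C-6)D\geq 0$; the "iff $y=\tfrac{L}{N}\onebf$" equality claim then follows from the strict positivity of $\Phi+6D$ away from uniform, not from the slack $C-6>0$ alone.
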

\begin{proof}
Taking the partial derivative of $f(y,m)$ with respect to $m$, we get that 
\begin{align*}
\frac{\partial}{\partial m} f(y,m) & = \frac{f(y,m)}{m} + m \sum_{i=1}^N \left( y_i^{m-1} y_{i+1}^m \left(\log y_i + \log y_{i+1} \right) - 2 y_i^{2m-1} \log y_{i} \right)
\end{align*}
In particular, for $m=1$: 
\begin{align*}
\frac{\partial}{\partial m} f(y,m) |_{m=1} & = f(y,1) + \sum_{i=1}^N \left(y_{i+1} \left(\log y_i + \log y_{i+1} \right) - 2 y_i \log y_i \right) \\
& = L \sum_{i=1}^N \frac{y_i}{L} \log \left(\frac{y_{i-1}/L}{y_i/L} \right) \\
& = - L D\left(\frac{y}{L} || P^- \frac{y}{L}\right)
\end{align*}
where, for the second equality, we used the trivial fact that $f(y,1)=0$. 
Taking second partial derivative of $f(y,m)$ w.r.t. $m$ gives:
\begin{align*}
\frac{\partial^2}{\partial m^2} f(y,m) = & \sum_{i=1}^N  y_i^{m-1}\log y_i  \left(y_{i+1}^m - y_i^m \right) + \sum_{i=1}^N y_i^{m-1} \left(y_{i+1}^m \log y_{i+1} - y_i^m \log y_i \right)\\
& + \sum_{i=1}^N \left( y_i^{m-1} y_{i+1}^m \left(\log y_i + \log y_{i+1} \right) - 2 y_i^{2m-1} \log y_{i} \right) \\
& + m \sum_{i=1}^N \left( y_i^{m-1} y_{i+1}^m \left(\log y_i + \log y_{i+1} \right)^2  - 4 y_i^{2m-1} \log^2 y_i \right)
\end{align*}
In particular, for $m=1$:
\begin{align}
\frac{\partial^2}{\partial m^2} f(y,m) |_{m=1} = & \sum_{i=1}^N \left(y_{i+1} - y_i \right) \log y_i + \sum_{i=1}^N \left(y_{i+1} \log y_{i+1} - y_i \log y_i \right) \nonumber \\
& + \sum_{i=1}^N \left(y_{i+1} \left(\log y_i + \log y_{i+1} \right) - 2 y_i \log y_i \right) \nonumber \\
& + \sum_{i=1}^N \left( y_{i+1} (\log y_i + \log y_{i+1})^2 - 4 y_i \log^2 y_i\right) \nonumber \\
\label{eq:second-derivative-m-final-eq}
= &  \sum_{i=1}^N \log^2 y_i \left(y_{i+1} - y_i \right) + 2 \sum_{i=1}^N \log y_i \left(y_{i+1} \log y_{i+1} + y_{i+1} - y_i \log y_i - y_i \right)  \\
\nonumber
\geq & \, 0
\end{align}
It is easy to check that, $\log z$, $\log^2 z$ and $z + z \log z$ are strictly increasing, strictly decreasing and strictly decreasing functions, respectively, for $z \in (0,e^{-2})$. Therefore, Lemma~\ref{lem:appendix-general-summation} implies that each of the two terms in \eqref{eq:second-derivative-m-final-eq} is non-negative, and hence the lemma.
\qed
\end{proof}

Lemma~\ref{lem:service-rate-dot-lower-bound} implies that, for sufficiently small $L$, $f(y,m)$ is locally convex in $m$. One can use this property along with an exact expression for $\frac{\partial}{\partial m}f(y,m)$ in Lemma~\ref{lem:service-rate-dot-lower-bound} at $m=1$, and the fact that $f(y,1)=0$ for all $y$, to develop a linear approximation in $m$ of $f(y,m)$ around $m=1$. The following lemma derives this approximation, as also suggested by Figure~\ref{fig:service-rate-dot-vs-m}.

\begin{figure}[htb!]
 \centering
 \includegraphics[width=5cm]{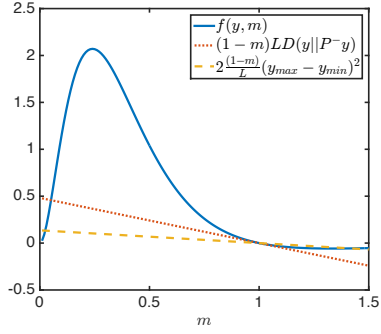} 
  \caption{$f(y,m)$ vs. $m$ for a typical $y \in \mc S_{10}$.}
    \label{fig:service-rate-dot-vs-m}
\end{figure}

\begin{lemma}
\label{lem:service-rate-dot-deltay-bound}
For a given $y \in \text{int}(\mc S_N^L)$, $n \in \NN$, $L \in (0,e^{-2})$, there exists $\underbar{m}(y) \in [0,1)$ such that 
$$
\frac{d}{dt} s(y) \geq 2 \frac{(1-m)}{L} \left(\ymax - \ymin \right)^2 \, , \qquad \forall \, m \in [\underbar{m}(y),1]
$$
\end{lemma}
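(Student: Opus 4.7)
My plan is to Taylor-expand $f(y,\cdot)$ around $m = 1$ using the two derivative identities supplied by Lemma~\ref{lem:service-rate-dot-lower-bound}, and then bound the resulting K--L term from below by $(\ymax - \ymin)^2$ via Pinsker's inequality together with a cyclic telescoping argument.

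The degenerate case $y = \tfrac{L}{N}\onebf$ is immediate: all summands in \eqref{eq:service-rate-derivative} vanish, so $f(y,m) \equiv 0$, and $\ymax - \ymin = 0$ as well, so any choice of $\underbar{m}(y)$ works. Henceforth I assume $y \neq \tfrac{L}{N}\onebf$. Then the strict inequality case of Lemma~\ref{lem:service-rate-dot-lower-bound} yields $\frac{\partial^2}{\partial m^2} f(y,m)\big|_{m=1} > 0$. Since $y \in \text{int}(\mc S_N^L)$ is fixed and $m \mapsto \frac{\partial^2}{\partial m^2} f(y,m)$ is a finite sum of terms of the form $y_i^{\alpha m + \beta}(\log y_i)^\gamma$, it is continuous in $m$. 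Hence there exists $\underbar{m}(y) \in [0,1)$ on which this second derivative stays non-negative, so $m \mapsto f(y,m)$ is convex on $[\underbar{m}(y),1]$ and lies above its tangent at $m=1$:
\begin{align*}
f(y,m) &\geq f(y,1) + (m-1)\,\frac{\partial}{\partial m} f(y,m)\Big|_{m=1} \\
&= 0 + (1-m)\, L\, D\bigl(y/L \,\big\|\, P^- y/L\bigr),
\end{align*}
having substituted $f(y,1) = 0$ and \eqref{eq:service-rate-der-m}.

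To finish, I apply Pinsker's inequality $D(p\|q) \geq \tfrac{1}{2}\|p-q\|_1^2$ with $p = y/L$ and $q = P^- y/L$, noting that $\|p - q\|_1 = \tfrac{1}{L}\sum_{i=1}^N |y_i - y_{i-1}|$. Viewing $(y_i)_{i=1}^N$ as a cyclic sequence and splitting the cycle into the two arcs joining any index attaining $\ymin$ and any index attaining $\ymax$, the triangle inequality applied along each arc gives $\sum_{i=1}^N |y_i - y_{i-1}| \geq 2(\ymax - \ymin)$. Squaring yields $D(y/L \,\|\, P^- y/L) \geq \tfrac{2}{L^2}(\ymax - \ymin)^2$, and combining with the tangent-line bound above (together with $\frac{d}{dt} s(y) = f(y,m)$) delivers the claim.

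The step I expect to be the main obstacle is the convexity/neighborhood step producing $\underbar{m}(y)$: it relies on the strict positivity of $\frac{\partial^2}{\partial m^2} f(y,m)\big|_{m=1}$, which in turn requires both the hypothesis $L < e^{-2}$ (via Lemma~\ref{lem:service-rate-dot-lower-bound}) and the already-separated condition $y \neq \tfrac{L}{N}\onebf$. The Pinsker step and the cyclic telescoping identity are otherwise routine.
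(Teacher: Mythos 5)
Your proof is correct and follows essentially the same route as the paper's: local convexity of $m \mapsto f(y,m)$ near $m=1$ (from the nonnegative second derivative in Lemma~\ref{lem:service-rate-dot-lower-bound}), the tangent-line bound at $m=1$ using $f(y,1)=0$ and \eqref{eq:service-rate-der-m}, then Pinsker's inequality combined with $\|y - P^- y\|_1 \geq 2(\ymax - \ymin)$. The one place you are more explicit than the paper is the justification that $\underbar{m}(y)$ exists: the paper simply invokes ``local convexity,'' whereas you separate off the degenerate uniform case, note that \eqref{eq:service-rate-twice-der-m} is strict for $y \neq \frac{L}{N}\onebf$, and then use continuity of $\frac{\partial^2}{\partial m^2} f(y,m)$ in $m$ to extend positivity to a neighborhood of $m=1$ — a genuinely necessary step that the paper leaves implicit. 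This is a welcome tightening rather than a different argument; the computational core (Pinsker plus the cyclic telescoping bound on the total variation) is identical.
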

\begin{proof}
For a given $y \in \text{int}(\mc S_N^L)$, the local convexity of $f(y,m):=\frac{d}{dt} s(y)$ in $m$, and the expression of $\frac{\partial}{\partial m}f(y,m)$ at $m=1$ in Lemma~\ref{lem:service-rate-dot-lower-bound}  implies that $\frac{d}{dt} s(y) \geq (1-m) L D\left(\frac{y}{L} || P^-\frac{y}{L} \right)$ for sufficiently small $m<1$. Pinsker's inequality implies $D\left(\frac{y}{L} || P^-\frac{y}{L} \right) \geq \frac{\|y-P^-y\|_1^2}{2 L^2}$. This, combined with the fact that $\|y-P^-y\|_1 \geq 2(\ymax-\ymin)$ for all $y \in \text{int}(\mc S_N^L)$, gives the lemma.
\qed
\end{proof}

%

\section{Busy Period Properties of the Horizontal Traffic Queue}
\label{sec:busy-period}

The system is called \emph{busy} when there is at least one vehicle on the road, or equivalently, the workload is positive. Once the system gets empty, it becomes \emph{idle} up to the time of next arrival. Thus, the system alternates between busy and idle periods. Accordingly, while the first busy period might start from a non-zero initial condition, if the first busy period terminates, then the subsequent busy periods will start from the zero initial condition. In this paper, unless otherwise stated explicitly, we shall implicitly assume a zero initial condition when referring to a busy period. 

\subsection{Expected Busy Period Duration}
The next lemma provides an expression for the expectation of the busy period duration in the linear case.

 \begin{lemma}\label{lemma:busy-period-mean}
 For any $\lambda < L/\bar \psi$, $L > 0$, $m=1$,  $\varphi \in \Phi$, $\psi \in \Psi$, the mean value of the busy period duration is equal to $\bar \psi / (L - \lambda \bar \psi)$.
 \end{lemma}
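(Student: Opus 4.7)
The plan is to reduce this to the classical M/G/1 busy-period computation by exploiting the state-independence of the service rate in the linear case. By the remark following Lemma~\ref{lem:service-rate-bounds}, when $m=1$ we have $s(y)=\sum_{i=1}^N y_i \equiv L$ for every state $y$, and by Lemma~\ref{lem:service-rate-jumps} (and its arrival counterpart in Remark~\ref{rem:service-rate-jump-arrival}, which is trivial for $m=1$) the service rate is also unchanged at arrivals and departures. Hence, while the queue is busy, the workload $w(t)$ decreases at the constant deterministic rate $L$, and at each arrival it jumps up by the travel distance $d_i \sim \psi$. Since the system is busy if and only if $w(t)>0$, the busy period of the HTQ coincides with the busy period of the workload process, which is precisely the workload of an M/G/1 queue with Poisson$(\lambda)$ arrivals, deterministic service speed $L$, and i.i.d.\ work requirements distributed as $\psi$ (equivalently, i.i.d.\ service times $d_i/L$ with mean $\bar\psi/L$ and traffic intensity $\rho=\lambda\bar\psi/L<1$, which is exactly the hypothesis $\lambda<L/\bar\psi$).

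First I would state the workload equivalence above formally, emphasizing that the busy period of HTQ equals the length of the excursion of $w(t)$ above zero initiated by the first arrival after an idle period. Next I would carry out the standard recursive argument. Let $B$ denote the length of the busy period started by an arrival carrying work $d_1$. Conditioning on $d_1$, the server needs time $d_1/L$ to finish this initial work if no further arrivals occurred, but during that interval new arrivals occur according to a Poisson process of rate $\lambda$; by the usual branching decomposition, each such arrival initiates an independent sub-busy-period distributed as $B$, and the Poisson count during $d_1/L$ has mean $\lambda d_1/L$. Taking expectations and using Wald's identity,
\begin{equation*}
\mathbb{E}[B] \;=\; \frac{\bar\psi}{L} \;+\; \frac{\lambda\,\bar\psi}{L}\,\mathbb{E}[B].
\end{equation*}
Solving for $\mathbb{E}[B]$ under $\rho<1$ yields $\mathbb{E}[B] = \bar\psi/(L-\lambda\bar\psi)$, as claimed.

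There is essentially no analytical obstacle once the reduction to M/G/1 is in place; the only thing worth double-checking is that the equivalence is genuine in our setting, namely that (i) the location-on-the-circle aspect of the HTQ is irrelevant for the workload process (since $w(t)$ only tracks remaining travel distances, and all speeds sum to $L$ regardless of positions), and (ii) the spatial arrival distribution $\varphi$ plays no role in the busy-period length (it affects which inter-vehicle distances appear but not the rate at which total remaining distance is drained). Both are immediate from $s(y)\equiv L$ and from the independence of $\{d_i\}$ from arrival times and locations, so no additional care beyond pointing this out is required.
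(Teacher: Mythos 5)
Your proof is correct, but it follows a genuinely different route from the paper's. The paper uses a workload-balance argument: letting $w_B$ be the total work brought into the system during the busy period and $N_{bn}$ the number of arrivals after the initiating one, it applies Wald's equation to get $E[w_B]=(E[N_{bn}]+1)\bar\psi=(\lambda E[B]+1)\bar\psi$, and then invokes the pathwise identity $B=w_B/L$ implied by the constant drain rate $L$; solving the resulting linear equation gives the answer. Your argument instead uses the classical Tak\'acs branching decomposition of the M/G/1 busy period: the busy period equals the normalized initial service time $d_1/L$ plus i.i.d.\ sub-busy-periods spawned by each Poisson arrival during that initial service, so that $E[B]=\bar\psi/L+(\lambda\bar\psi/L)E[B]$, again solvable since $\lambda\bar\psi/L<1$. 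The paper's approach is slightly more elementary and direct -- it needs only Wald and the drain-rate identity, and no recursive structure -- while the branching decomposition is the natural starting point if one wants higher moments or the busy-period transform. Your preliminary reduction-to-M/G/1 checks (that $s(y)\equiv L$ makes positions on the circle and the spatial distribution $\varphi$ irrelevant to the workload excursion, and that the $\{d_i\}$ are independent of arrival times and locations) are exactly the observations the paper relies on implicitly, so no gap there.
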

  \begin{proof}
A busy period, say of duration $B$, is initiated by the arrival of a vehicle, say $j$, when the system is idle. 
Let the number of vehicles that arrive during the busy period be $N_{bn}$. Note that $N_{bn}$ does not include the vehicle initiating the busy period. Therefore, the workload brought into the system during the busy period is equal to $w_B=\sum_{i=j}^{j+N_{bn}} d_i$.
The expected value of $N_{bn}$ can be obtained by conditioning on the duration of the busy period: 
\be\label{eq:nb-expectation}
E[N_{bn}]=E\left[E[N_{bn}|B]\right]=E[\lambda B]=\lambda E[B]
\ee
where the second equality follows from the fact that the arrival process is a Poisson process. 
Since the event $\{N_{bn}+1=n\}$ is independent of $\{d_{j+i},i > n\}$, $N_{bn}+1$ is a stopping time for the sequence $\{d_{j+i},i\geq 1\}$. Therefore, using Wald's equation, e.g., see \cite[Theorem 3.3.2]{Ross:96}, and \eqref{eq:nb-expectation}, the expected value of the workload $w_B$ added to the system during the busy period $B$ is given by: 
\be\label{eq:wb-expectation}
E[w_B]=(E[N_{bn}]+1) \, \bar \psi=(\lambda E[B]+1) \, \bar \psi.
\ee

\begin{figure}[htb!]
 \centering
 \includegraphics[width=5.5cm]{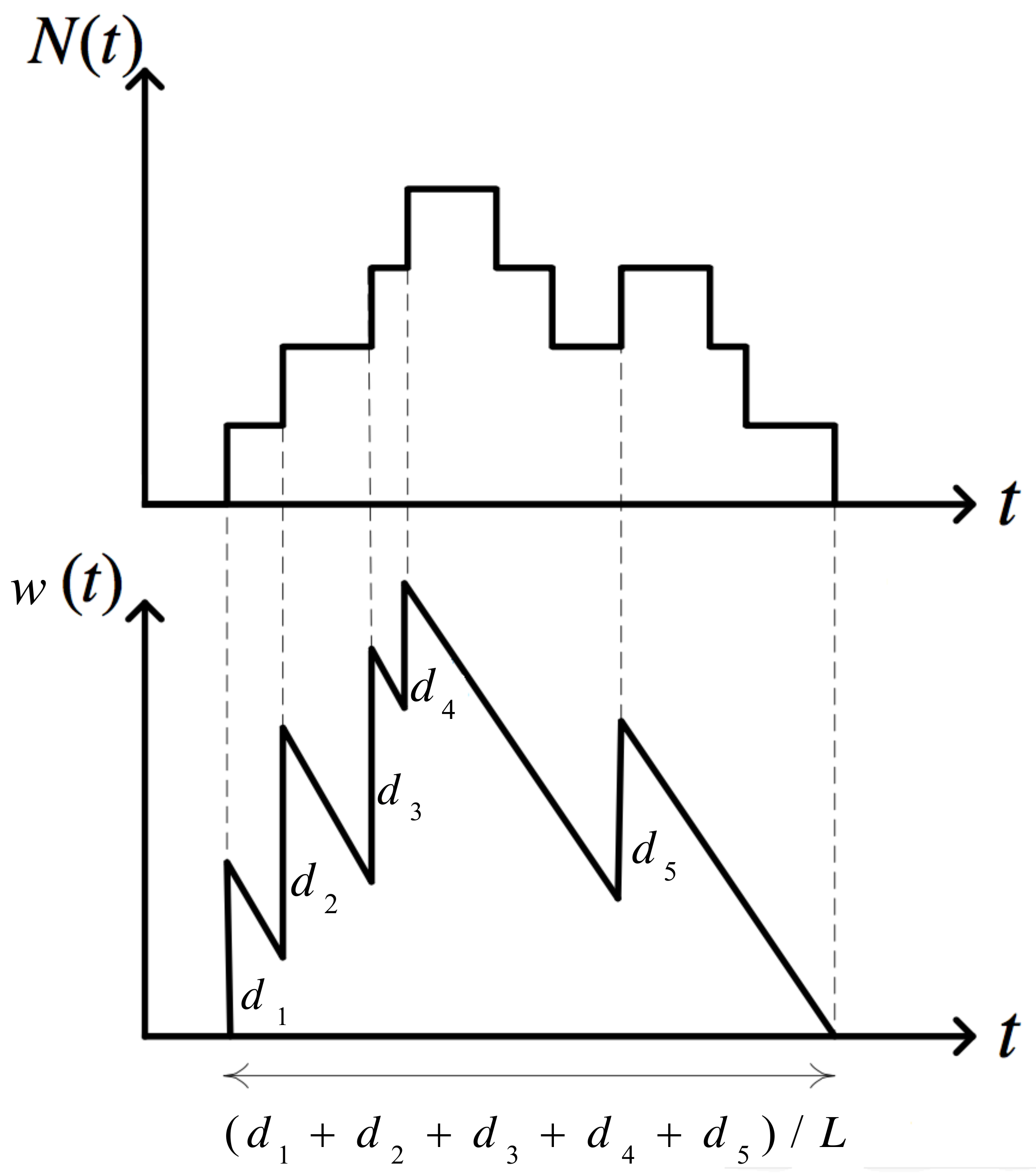} 
  \caption{(a) Queue length process and (b) workload process during a busy period. }
    \label{fig:busy-period}
\end{figure}

Since the workload decreases at a constant rate $L$ during a busy period, we have $B=w_B/L$ (see Figure \ref{fig:busy-period} for an illustration). Therefore, $E[B]=E[w_B]/L$, which when combined with \eqref{eq:wb-expectation}, establishes the lemma. 
\qed
\end{proof}

\begin{remark}
\label{remark:waiting-time}
Since the mean busy period duration is an upper bound on the mean waiting time, Lemma \ref{lemma:busy-period-mean} also gives an upper bound on the mean waiting time. One can then use Little's law~\cite{Kleinrock:75}\footnote{Little's law has previously been used in the context of processor sharing queues, e.g., in \cite{Altman.ea:06}.} to show that the mean queue length is upper bounded by $\lambda\bar \psi / (L - \lambda \bar \psi)$.
\end{remark}

Let $\mathcal{I}(t):=\int_0^t\delta_{\{w(s)=0\}}\de s$ be the cumulative \emph{idle time} up to time $t$. The following result characterizes the long run proportion of the idle time in the linear case.
\begin{proposition}
\label{prop:long-run-idle-time}
For any $\lambda<L/\bar \psi$, $m=1$, $L > 0$, $\varphi \in \Phi, \psi \in \Psi$, the long-run proportion of time in which HTQ is idle is given by the following:
$$\lim_{t\to\infty} \frac{\mathcal{I}(t)}{t}=1-\frac{\lambda \bar \psi}{L}>0 \quad a.s.$$
\end{proposition}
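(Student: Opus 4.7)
The plan is to exploit the alternating renewal structure of the HTQ between busy and idle periods, and apply the renewal reward theorem. First I would observe that the instants at which the system transitions from busy to idle are regeneration points: by the strong Markov property, after such an instant the Poisson arrival process restarts afresh, and the next busy period begins from the zero initial condition (as stipulated at the start of Section~\ref{sec:busy-period}). Combined with the assumed independence of inter-arrival times, arrival locations, and travel distances, this makes the successive idle/busy cycles independent and identically distributed (modulo the first cycle, which may begin with a non-trivial initial condition but contributes a single a.s.\ finite term that is irrelevant in the long-run ratio).

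Next I would identify the length and reward contributed by one cycle. Since $\{A(t)\}$ is Poisson with rate $\lambda$, an idle period $I$ is exponentially distributed with mean $E[I]=1/\lambda$, and the cumulative idle time accumulated during a cycle is exactly $I$. By Lemma~\ref{lemma:busy-period-mean}, a busy period $B$ (from zero initial condition) has mean $E[B]=\bar\psi/(L-\lambda\bar\psi)$, which is finite under the standing hypothesis $\lambda\bar\psi<L$. In particular, $E[B]<\infty$ implies $B<\infty$ almost surely, so cycles are a.s.\ finite with finite mean length $E[I]+E[B]$.

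Applying the strong law for renewal reward processes (e.g., \cite[Thm.~3.6.1]{Ross:96}) to the cumulative reward $\mathcal{I}(t)$ then yields
$$
\lim_{t\to\infty}\frac{\mathcal{I}(t)}{t} \;=\; \frac{E[I]}{E[I]+E[B]} \;=\; \frac{1/\lambda}{1/\lambda + \bar\psi/(L-\lambda\bar\psi)} \;=\; 1-\frac{\lambda\bar\psi}{L} \quad a.s.,
$$
and strict positivity follows from $\lambda<L/\bar\psi$. The potential non-trivial initial condition of the first busy period is handled by starting the renewal epochs at the first idle instant (which is a.s.\ reached since the first busy period has finite mean length by Lemma~\ref{lemma:busy-period-mean} applied with an appropriately enlarged ``starting workload''; the first cycle then contributes only a bounded a.s.\ correction to $\mathcal{I}(t)/t$ that vanishes as $t\to\infty$).

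The only genuine obstacle is the i.i.d.\ justification for the cycles; once the regenerative structure at empty-system instants is in hand, the rest is routine renewal theory combined with the already-proved Lemma~\ref{lemma:busy-period-mean}. I would therefore be careful to spell out why consecutive busy periods have the common law implicit in Lemma~\ref{lemma:busy-period-mean}, namely that each begins with exactly one vehicle whose arrival location and remaining travel distance are drawn from $\varphi$ and $\psi$ independently of the past.
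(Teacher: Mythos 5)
Your proposal is correct and follows essentially the same route as the paper: decompose the timeline into i.i.d.\ idle/busy cycles, apply the renewal reward theorem with reward rate one during idle periods, and plug in $E[I]=1/\lambda$ together with the expression for $E[B]$ from Lemma~\ref{lemma:busy-period-mean}. The additional care you take in justifying the regenerative structure and discounting the first (possibly atypical) cycle is reasonable elaboration but does not constitute a different argument.
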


\begin{proof}
HTQ alternates between busy and idle periods. Let $Z=I+B$ be the duration of a cycle that contains an idle period of length $I$ followed by a busy period of length $B$. Idle period, $I$, has the same distribution as inter-arrival times i.e. an exponential random variable with mean $1/\lambda$, and the mean value of $B$ is given in Lemma \ref{lemma:busy-period-mean}. Note that duration of cycles, $Z$, are i.i.d. random variables. Thus, the busy-idle profile of the system is an alternating renewal process where renewals correspond to the moments at which the system gets idle.  Suppose the system earns reward at a rate of one per unit of time when it is idle (and thus the reward for a cycle equals the idle time of that cycle i.e. $I$). Then, the total reward earned up to time $t$ is equal to the total idle time in $[0,t]$ (or $\mathcal{I}(t)$), and by the result for renewal reward process (see \cite{Ross:96}, Theorem 3.6.1), with probability one,
$\lim_{t\to\infty} \mathcal{I}(t)/t=E[I]/(E[B]+E[I])$.  
\qed
\end{proof}

%

\subsection{Busy Period Distribution}
\label{sec:busy-period}
In this section, we compute the cumulative distribution function for the number of new arrivals during a busy period for a HTQ with constant service rate, say $p>0$. This could, e.g., correspond to \eqref{eq:inter-vehicle-distance-dynamics} for $m=1$. However, our analysis in this section, is not restricted to this specific model, but applies to any HTQ with constant service rate $p$. 
This cumulative distribution for the number of new arrivals during a busy period, while of independent interest, will be used to derive lower bounds on the throughput in the super-linear case in Section~\ref{subsec:superlinear}. Our analysis is inspired by that of M/G/1 queue, e.g., see \cite{Ross:96}, where our consideration for non-zero initial condition appears to be novel. 

Let us consider an arbitrary busy period spanning time interval $(0,t)$, without loss of generality. For non-zero initial condition, one has to distinguish between the first and subsequent busy periods. Let the workload at the beginning of the arbitrary busy period, denoted as $d_0$, be sampled from $\theta$. The relationship between $\theta$ and $\psi$ is as follows. 
If the system starts with a non-zero initial initial condition with initial workload $w_0>0$, then the value of the $d_0$ for the first busy period will be deterministic and equals $w_0$, and hence $\theta=\delta_{w_0}$. However, for subsequent busy periods, or if the initial condition is zero, $d_0$ is sampled from $\theta=\psi$. 
The workload brought to the system by arriving vehicles, $\{d_i\}_{i=1}^\infty$, equals to the distance that  vehicles wish to travel and are sampled identically and independently from the distribution $\psi$. When the system is busy, the workload decreases at a given constant rate $p > 0$. The busy period ends when the workload becomes zero. 
\begin{remark}
We emphasize that $d_0$ denotes the workload at the beginning of a busy period (see Figure \ref{fig:normalized-distance} for further illustration), and hence is not equal to zero when the queue starts from a zero initial condition. 
\end{remark}

In order to align our calculations with the standard M/G/1 framework, where service rate is assumed to be unity, we consider normalized workloads, $\tilde{d}_i:=d_i/p$ for all $i \in \{0,1,\cdots\}$ (see Figure \ref{fig:normalized-distance} for an illustration). Correspondingly, let the distributions for the normalized distances be denoted as $\tilde{\theta}$ and $\tilde{\psi}$. Let the arrival time of the $k$-th new vehicle during $(0,t)$ be denoted as $T_k$, and let $N_{bn}$ denote the number of arrivals in $(0,t)$, i.e., the total number of arrivals over the entire duration of the busy period, including the vehicle which initiates the busy period, is $N_{bn} + 1$. 

 A busy period ends at time $t$, and $N_{bn}=n-1$ if and only if, 
\begin{enumerate}[(i)]
\item $T_k\leq \tilde d_0+\cdots+ \tilde d_{k-1}, \qquad k=1,\cdots,n-1$ 
\item $\tilde d_0+\cdots+ \tilde d_{n-1}=t$
\item There are exactly $n-1$ arrivals in $(0,t)$
\end{enumerate} 

\begin{figure}[htbp]
\begin{center}
\includegraphics[width=8cm]{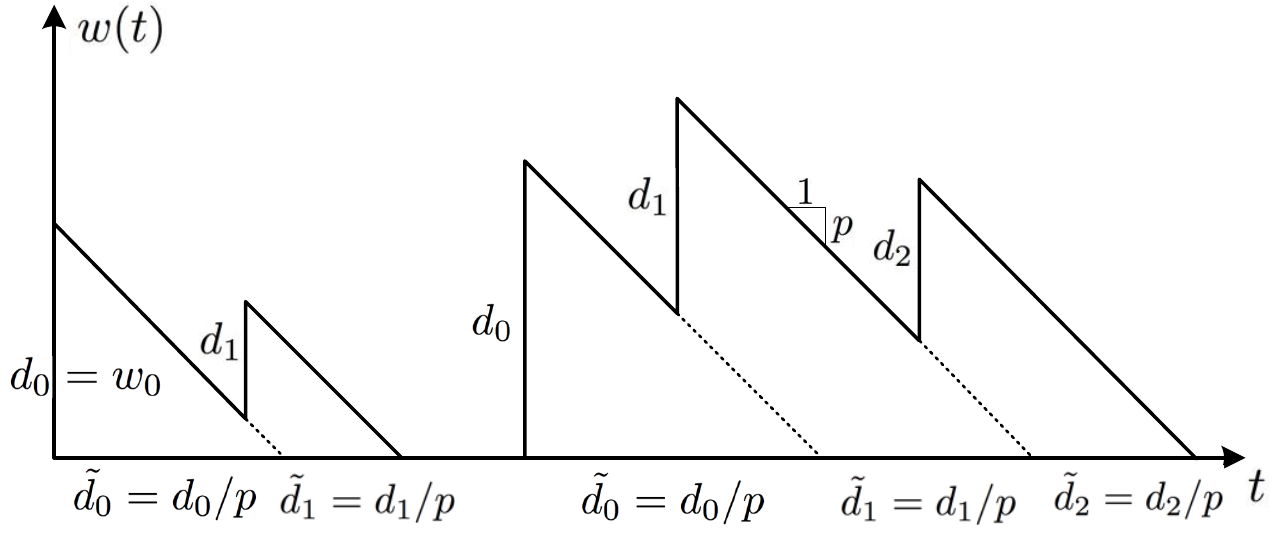} 
\caption{Evolution of workload during first two busy periods for an HTQ with constant service rate $p$, and starting from a non-zero initial condition. In the first busy period, $d_0$ is equal to the workload $w_0$ associated with the non-zero initial condition. In the second busy period, $d_0$ is equal to the workload brought by the first vehicle that initiates that busy period.}
\label{fig:normalized-distance}
\end{center}
\end{figure}

%

By treating densities as if they are probabilities, we get:
\begin{align}
\label{eq:prop-busy-des}
\Pr & (B =t \text{ and }N_{bn} = n -1)\nonumber \\
=&  \Pr(\tilde d_0+\cdots+ \tilde d_{n-1}=t, n-1 \text{ arrivals in $(0,t)$}, T_k\leq \tilde d_0+\cdots+ \tilde d_{k-1}, k=1,\cdots,n-1) \nonumber \\
=& \int_0^t\Pr(T_k\leq \tilde d_0+\cdots+ \tilde d_{k-1}, k=1,\cdots,n-1| n-1 \text{ arrivals in $(0,t)$}, \tilde d_0+\cdots+ \tilde d_{n-1}=t, \tilde d_0 =z) \nonumber \\
& \times \Pr(n-1 \text{ arrivals in $(0,t)$}, \tilde d_1+\cdots+ \tilde d_{n-1}=t-z)\, \tilde \theta(z)  \, \de z 
\end{align}
where we recall that $B$ is the random variable corresponding to the busy period duration. By the independence of normalized distances and the arrival process, the second probability term in the integrand in \eqref{eq:prop-busy-des} can be expressed as
\begin{align}
\label{eq:prop-busy-part2}
\Pr(n-1 \text{ arrivals in $(0,t)$}, \tilde d_1+\cdots+ \tilde d_{n-1}=t-z)=e^{-\lambda t}\frac{(\lambda t)^{n-1}}{(n-1)!} \tilde \psi_{n-1}(t- z) 
\end{align}
where $\tilde\psi_n$ is the $n$-fold convolution of $\tilde\psi$ with itself. 

In the first probability term in \eqref{eq:prop-busy-des}, it is given that the system receives $n-1$ arrivals in $(0,t)$ and since the arrival process is a Poisson process, the ordered arrival times, $\{T_1,T_2,\cdots,T_{n-1}\}$, are distributed as the ordered values of a set of $n-1$ independent uniform $(0,t)$ random variables $\{a_1,a_2,\cdots,a_{n-1}\}$ (see Theorem 2.3.1 in \cite{Ross:96}). Thus,
\begin{align}\label{eq:busy-period-given-n-prob}
\Pr & (T_k\leq \tilde d_0+\cdots+ \tilde d_{k-1}, k=1,\cdots,n-1| n-1 \text{ arrivals in $(0,t)$}, \tilde d_0+\cdots+ \tilde d_{n-1}=t,\tilde d_0 =z) \nonumber \\
&  =  \Pr(a_k\leq \tilde d_0+\cdots+\tilde d_{k-1}, k=1,\cdots,n-1|\tilde d_0+\cdots+\tilde d_{n-1} =t,\tilde d_0 =z)
\end{align}


%


By noting that $t-U$ will also be a uniform $(0,t)$ random variable whenever $U$ is, it follows that $a_1,\cdots,a_{n-1}$ has the same joint distribution as $t-a_{n-1},\cdots,t-a_1$. Thus, replacing $a_k$ with $a_{n-k}$ for $k\in\{1,\cdots,n-1\}$ in \eqref{eq:busy-period-given-n-prob}, we get
\begin{align}\label{eq:prob-manipulation}
 \Pr&(a_k  \leq \tilde d_0+\cdots+\tilde d_{k-1}, k=1,\cdots,n-1|\tilde d_0+\cdots+\tilde d_{n-1} =t,\tilde d_0 =z) \nonumber \\
 &  =\Pr(t-a_{n-k}  \leq \tilde d_0+\cdots+\tilde d_{k-1}, k=1,\cdots,n-1|\tilde d_0+\cdots+\tilde d_{n-1} =t,\tilde d_{0} =z)  \nonumber \\
 & = \Pr(t-a_{n-k}  \leq  t-(\tilde d_{k}+\cdots+\tilde d_{n-1}), k=1,\cdots,n-1|\tilde d_0+\cdots+\tilde d_{n-1} =t,\tilde d_0 =z)  \nonumber \\
  & = \Pr(a_{n-k}  \geq  \tilde d_{k}+\cdots+\tilde d_{n-1}, k=1,\cdots,n-1|\tilde d_0+\cdots+\tilde d_{n-1} =t,\tilde d_0 =z)  \nonumber \\
  & = \Pr(a_{n-k}  \geq  \tilde d_{k}+\cdots+\tilde d_{n-1}, k=1,\cdots,n-1|\tilde d_0+\cdots+\tilde d_{n-1} =t,\tilde d_0 =z) = \begin{cases}  z/t &  z<t \\
                          0      & \text{otherwise}
    \end{cases}
\end{align}
where the last equality follows from Lemma \ref{lemma:busy-period-type1-prob}.
If we let $H_p(t,n,\theta):=\Pr\{B\leq t, N_{bn}=n-1\}$ when the service rate equals $p$, and $d_0$ has distribution $\theta$; then, by plugging \eqref{eq:prop-busy-part2} and \eqref{eq:prob-manipulation} in \eqref{eq:prop-busy-des}, we get
\begin{equation*}
\frac{d}{dt}H_p(t,n,\theta) = e^{-\lambda t}\frac{(\lambda t)^{n-1}}{t(n-1)!}\int_0^t z \tilde\psi_{n-1}(t-z)\tilde \theta(z)\de z
\end{equation*}

By recalling the two special cases of interest to us: $\theta=\delta_{w_0}$ for a given non-zero initial workload $w_0$, and $\theta=\psi$ for zero initial condition, and using Lemma \ref{lemma:t-n},  we get that 
\begin{equation}
\label{eq:G-def}
G_p(t,n,\theta):=\frac{d}{dt}H_p(t,n,\theta)=\begin{cases} e^{-\lambda t}\frac{(\lambda t)^{n-1}w_0}{t(n-1)!p}\tilde \psi_{n-1}(t-w_0/p) & \theta = \delta_{w_0} \\ e^{-\lambda t}\frac{(\lambda t)^{n-1}}{n!}\tilde \psi_n(t) & \theta = \psi \end{cases}
\end{equation}

For $r \in \NN$, let $G_{r,p}(t,n,\theta)$ be the $r$-fold convolution of $G_p(t,n,\theta)$, defined in \eqref{eq:G-def}, with respect to $t$. In words, $G_{r,p}(t,n,\theta)$ is the probability that the number of new arrivals in each of (any) $r$ busy periods is equal to $n-1$, and that the sum of durations of all the busy periods is equal to $t$. Similarly, for non-zero initial condition, let $G_{p_1}(\theta_1) * G_{r-1,p_2}(\theta_2) (t,n)$ be the probability that the number of new arrivals in each of (any) $r$ busy periods is equal to $n-1$, and that the sum of durations of all the busy periods is equal to $t$, when the constant service rate for the first busy period is $p_1$ and is $p_2$ for the rest of the $r-1$ busy periods.

\section{Throughput Analysis}
\label{sec:throughput-analysis}

\subsection{Linear Case: $m=1$}
\label{sec:linear}
In this section, we provide an exact characterization of throughput for the linear case, i.e., when $m=1$. Recall that, for $m=1$, the service rate $s(y)=\sum_{i=1}^N y_i \equiv L$ is constant.

\begin{proposition}\label{prop:unstable}
For any $L > 0$, $\varphi \in \Phi$, $\psi \in \Psi$, $x_0 \in [0,L]^{n_0}$, $n_0 \in \NN$ and :
$$
\lambdamax(L,m=1,\varphi,\psi,x_0,\delta=0) \leq L/\bar{\psi} \, .
$$ 
\end{proposition}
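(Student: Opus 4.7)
The plan is to exploit the fact that, for $m=1$, the service rate is identically $L$, which reduces the workload dynamics of the HTQ to that of a standard M/G/1 workload process. Once workload instability is established for $\lambda > L/\bar\psi$, the inequality $w(t)\leq N(t) R$ from \eqref{eq:workload-upperbound} transfers instability of workload to instability of the queue length, thereby ruling out such $\lambda$ from the feasible set in \eqref{eq:throughput-def} when $\delta=0$.

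First, I would record the workload representation in the linear case. Since $s(y) = \sum_i y_i \equiv L$ whenever the system is busy (as noted immediately before the proposition statement), the workload obeys
\begin{equation*}
w(t) \;=\; w(x_0, q_0) + \sum_{i=1}^{A(t)} d_i \;-\; L\int_0^t \mathbf{1}_{\{w(s) > 0\}}\, ds \;\geq\; w(x_0, q_0) + \sum_{i=1}^{A(t)} d_i - L t,
\end{equation*}
where $w(x_0,q_0)\geq 0$ denotes the workload associated with the initial condition and the inequality merely drops the indicator. This is exactly the M/G/1 workload equation with input rate $\lambda$, job-size distribution $\psi$, and constant server speed $L$.

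Second, I would invoke the strong law of large numbers for the compound Poisson process to obtain
\begin{equation*}
\lim_{t\to\infty} \frac{1}{t}\sum_{i=1}^{A(t)} d_i \;=\; \lambda\,\bar\psi \quad \text{almost surely},
\end{equation*}
so for any fixed $\lambda > L/\bar\psi$ the lower bound above tends to $+\infty$ almost surely, giving $w(t)\to+\infty$ a.s. Combining this with \eqref{eq:workload-upperbound} yields $N(t) \geq w(t)/R \to +\infty$ a.s., and in particular $\sup_{t\geq 0} N(t) = +\infty$ on an event of probability one. Hence such a $\lambda$ is not in the set over which the supremum in \eqref{eq:throughput-def} is taken (with $\delta=0$). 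As this holds for every $\lambda > L/\bar\psi$, the claim $\lambda_{\max}(L,1,\varphi,\psi,x_0,0)\leq L/\bar\psi$ follows.

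The argument is essentially routine once the constant-service-rate observation is made; there is no real obstacle beyond the compound-Poisson SLLN and the book-keeping of initial workload. The only conceptual care needed is interpreting the feasibility condition $\Pr(N(t)<+\infty,\forall t\geq 0) \geq 1$ in \eqref{eq:throughput-def} as requiring the queue length to stay bounded over the whole time horizon (consistent with the statement in the abstract), so that a.s.\ divergence of $N(t)$ genuinely disqualifies $\lambda$.
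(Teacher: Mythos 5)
Your proof is correct and follows essentially the same route as the paper's: write the workload as initial workload plus cumulative arrived work minus $L$ times busy time, drop the nonnegative terms to get $w(t)\geq r(t)-Lt$, apply the strong law / renewal-reward theorem to $r(t)=\sum_{i\leq A(t)}d_i$, and then transfer unbounded workload to unbounded queue length via $w(t)\leq N(t)R$. The only cosmetic difference is that the paper phrases it as a contradiction starting from the assumption $\lambdamax>L/\bar\psi$, whereas you directly show that every $\lambda>L/\bar\psi$ is infeasible; these are logically identical. You also explicitly flag the interpretational point that the feasibility event $\{N(t)<+\infty \ \forall t\}$ must be read as boundedness of the queue-length trajectory (as in the abstract) for a.s.\ divergence to disqualify $\lambda$ -- a point the paper glosses over but which is worth noting.
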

\begin{proof}
By contradiction, assume $\lambdamax>L/\bar{\psi}$. Let $r(t):=\sum_{i=1}^{A(t)} d_i$ be the workload added to the system by the $A(t)$ vehicles that arrive over $[0,t]$. Therefore, 
\be\label{eq:workload-linear}
w(t)=w_0+r(t)-L(t-\mathcal{I}(t))
\ee
where $w_0$ is the initial workload. The process $\{r(t), \, t\geq 0\}$ is a renewal reward process, where the renewals correspond to arrivals of vehicles and the rewards correspond to the distances $\{d_i\}_{i=1}^{\infty}$ that vehicles wish to travel in the system upon arrival before their departures. 
Inter-arrival times are exponential random variables with mean $1/\lambda$, and the reward associated with each renewal is independently and identically sampled from $\psi$, whose mean is $\bar{\psi}$.  
%
%
Therefore, e.g., \cite[Theorem 3.6.1]{Ross:96} implies that, with probability one,  
\be
\label{eq:slln-renewal}
\lim_{t\to\infty}\frac{r(t)}{t}=\lambda \bar{\psi}
\ee
Thus, for all $\varepsilon \in \left(0,\lambda \bar{\psi} -L\right)$, there exists a $t_0\geq0$ such that, with probability one,
\be\label{eq:s-t-lowerbound}
\frac{r(t)}{t}\geq\lambda \bar \psi-\varepsilon/2> L + \varepsilon/2 \qquad \forall \, t \geq t_0.
\ee
Since $w_0$ and $\mathcal{I}(t)$ are both non-negative, \eqref{eq:workload-linear} implies that $w(t)\geq r(t) - Lt$ for all $t \geq 0$. 
%
This combined with \eqref{eq:s-t-lowerbound} implies that, with probability one, $w(t) \geq \varepsilon t /2$ for all $t \geq t_0$, and hence 
$\lim_{t\to\infty}w(t)= + \infty$. This combined with \eqref{eq:workload-upperbound} implies that, with probability one, $\lim_{t\to\infty}N(t)=+\infty$. 
\qed
\end{proof}

\begin{theorem}\label{thm:stable}
For any $L > 0$, $\varphi \in \Phi$, $\psi \in \Psi$, $x_0 \in [0,L]^{n_0}$, $n_0 \in \NN$:
$$
\lambdamax(L,m=1,\varphi,\psi,x_0,\delta=1) = L/\bar{\psi} \, .
$$ 
\end{theorem}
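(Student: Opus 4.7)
The plan is to complement the upper bound from Proposition~\ref{prop:unstable} with a matching lower bound showing that, for every $\lambda < L/\bar\psi$, the queue length almost surely does not diverge; every such $\lambda$ then belongs to the set defining the supremum in \eqref{eq:throughput-def}. Since $m=1$, the service rate is identically $L$ whenever the workload is positive, so the workload process evolves like that of an M/G/1 queue with constant service rate $L$ and i.i.d.\ job sizes drawn from $\psi$, starting from the finite initial workload $w_0$ determined by the initial vehicle coordinates $x_0$.

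The first step is to show that the initial busy period---which starts from the non-zero workload $w_0$---is almost surely finite. I would do this by a minor variant of Lemma~\ref{lemma:busy-period-mean}: treat the deterministic $w_0$ as the ``distance'' of a virtual initiator and apply Wald's identity to the stopping time equal to the number of arrivals during this first busy period. The computation yields $E[B \mid \theta = \delta_{w_0}] = (w_0 + \bar\psi)/(L - \lambda\bar\psi) < \infty$; the same conclusion follows by integrating the busy-period density $G_L(t,n,\delta_{w_0})$ derived in Section~\ref{sec:busy-period}. Hence the first busy period ends at an almost surely finite random time.

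Once it ends, the system becomes idle, and all subsequent busy-idle cycles are i.i.d.\ with finite expected length $1/\lambda + \bar\psi/(L - \lambda\bar\psi)$ by Lemma~\ref{lemma:busy-period-mean}. The strong law for renewal processes, applied exactly as in the proof of Proposition~\ref{prop:long-run-idle-time}, then produces a sequence of idle instants $t_n \to \infty$ almost surely with $N(t_n) = 0$, which precludes $\lim_{t\to\infty} N(t) = +\infty$. Therefore $\Pr(N(t) < \infty\ \forall\, t \geq 0) = 1$, and combining with Proposition~\ref{prop:unstable} establishes $\lambdamax = L/\bar\psi$. The only substantive obstacle is the initial-transient step handling the non-zero initial condition, since Lemma~\ref{lemma:busy-period-mean} as stated covers only busy periods initiated from the empty queue; after this is handled, everything reduces to standard renewal theory already used in Proposition~\ref{prop:long-run-idle-time}.
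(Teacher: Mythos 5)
Your route is genuinely different from the paper's. The paper proves the lower bound $\lambdamax \geq L/\bar\psi$ by contradiction: it assumes the queue length grows unbounded with positive probability, invokes memorylessness to reduce (WLOG) to the event that the queue never empties so $\mathcal{I}(t)\equiv 0$, and then applies the renewal-reward SLLN \eqref{eq:slln-renewal} to $r(t)$ in \eqref{eq:workload-linear} to force $w(t)$ below zero in finite time --- a contradiction. You instead take a constructive regeneration route: show the first (initial-condition-dependent) busy period has finite expectation via Wald, then observe that all subsequent cycles are i.i.d.\ with finite mean (Lemma~\ref{lemma:busy-period-mean}), and invoke renewal theory as in Proposition~\ref{prop:long-run-idle-time} to produce idle instants tending to infinity. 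Both arguments land at the same place; the paper's is terser but leans on a somewhat informal ``WLOG $\mathcal{I}(t)\equiv 0$'' step, while yours makes the regeneration structure explicit at the cost of needing the non-empty-start extension of Lemma~\ref{lemma:busy-period-mean}, which you correctly flag as the one non-standard ingredient.

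One quantitative slip: your Wald computation for the first busy period should give $E[B\,|\,\theta=\delta_{w_0}] = w_0/(L-\lambda\bar\psi)$, not $(w_0+\bar\psi)/(L-\lambda\bar\psi)$. The first busy period is initiated by the deterministic initial workload $w_0$, not by a fresh arrival carrying an additional independent $\psi$-distributed job. Writing $w_B = w_0 + \sum_{i=1}^{N_{bn}} d_i$ and applying Wald exactly as in the proof of Lemma~\ref{lemma:busy-period-mean} gives $E[w_B] = w_0 + \lambda E[B]\bar\psi$ and $E[B]=E[w_B]/L$, hence $E[B]=w_0/(L-\lambda\bar\psi)$. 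The error does not affect your conclusion that $E[B]<\infty$ whenever $\lambda<L/\bar\psi$, so the argument goes through.
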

\begin{proof}
Assume that for some $\lambda < L/\bar{\psi}$, there exists some initial condition $(x_0,n_0)$ such that the queue length grows unbounded with some positive probability. 
Since the workload brought by every vehicle is i.i.d., and the inter-arrival times are exponential, without loss of generality, we can assume that the queue length never becomes zero. That is, the idle time satisfies $\mathcal{I}(t) \equiv 0$. Moreover, \eqref{eq:slln-renewal} implies that, for every $\varepsilon \in \left(0, L- \lambda \bar{\psi} \right)$, there exists $t_0 \geq 0$ such that, with probability one,
\begin{equation}
\label{eq:s-t-upperbound}
\frac{r(t)}{t} \leq \lambda \bar{\psi} + \varepsilon/2 < L - \varepsilon/2 \qquad \forall t \geq t_0
\end{equation} 

Combining \eqref{eq:workload-linear} with \eqref{eq:s-t-upperbound}, and substituting $\mathcal{I}(t) \equiv 0$, we get $w(t) < w_0 - \varepsilon t /2$, which implies that workload, and hence queue length, goes to zero in finite time after $t_0$, leading to a contradiction.  
%
Combining this with the upper bound proven in Proposition~\ref{prop:unstable} gives the result. 
%
%
%
\qed
\end{proof}

\begin{remark}
Theorem~\ref{thm:stable} implies that the throughput in the linear case is equal to the inverse of the 
time required to travel average total distance by a solitary vehicle in the system. In the linear case, the throughput can be characterized with probability one, independent of the initial condition of the queue. 
\end{remark}

\subsection{Monotonicity of Throughput in $m$ and $x_0$}
\label{sec:non-linear}


In this section, we show the following monotonicity property of $\lambdamax$ with respect to $m$ for small values of $L$: for given $x_0 \in [0,L]^{n_0}$, $n_0 \in \NN$, $L \in (0,1)$, $\varphi \in \Phi$, and $\psi \in \Psi$, throughput is a monotonically decreasing function of $m$. 
%
For this section, we rewrite \eqref{eq:dynamics-moving-coordinates} in $\RR_+^N$, i.e., without projecting onto $[0,L]^N$. Specifically, let the vehicle coordinates be given by the solution of 
\begin{equation}
\label{eq:x-dynamics-in-Rn}
\dot{x}_i = y_i^m, \qquad x_i(0)=x_{0,i}, \qquad i \in \until{N} 
\end{equation} 
Let $X(t;x_0,m)$ denote the solution to \eqref{eq:x-dynamics-in-Rn} at $t$ starting from $x_0$ at $t=0$. We will compare $X(t;x_0,m)$ under different values of $m$ and initial conditions $x_0$, over an interval of the kind $[0,\tau)$, in between arrivals and departures. 
We recall the notation that, if $x_{0}^1$ and $x_{0}^2$ are vectors of different sizes, then $x_{0}^1 \leq x_{0}^2$ implies element-wise inequality only for components which are common to $x_{0}^1$ and $x_{0}^2$. In Lemma~\ref{lemma:two-size-compare} and Proposition~\ref{prop:queue-length-2m}, this common set of components corresponds to the set of vehicles common between $x_{0}^1$ and $x_{0}^2$.
 
%

\begin{lemma}
\label{lemma:two-size-compare}
For any $L \in (0,1]$, $x^1_{0} \in \RR_+^{n_1}$, $x^2_{0} \in \RR_+^{n_2}$, $n_1, n_2 \in \NN$,
$$
x^1_{0} \leq x^2_{0}, \, \, n_2 \leq n_1, \, \,  0 < m_2 \leq m_1 \implies X(t;x^1_{0},m_1) \leq X(t;x^2_{0},m_2) \qquad \forall \, t \in [0,\tau)
$$ 
\end{lemma}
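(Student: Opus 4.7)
The plan is a comparison argument strengthened by a linear perturbation. Identify the common vehicles by their cyclically ordered indices $i_1 < i_2 < \cdots < i_{n_2}$ in system 1, in one-to-one correspondence with the indices $1,2,\ldots,n_2$ in system 2; by cyclic-order preservation between jumps (noted in the remark following \eqref{eq:dynamics-moving-coordinates}) and the invariant $\sum_j y_j \equiv L$, this identification persists on $[0,\tau)$ with all positions trapped in a strip of width $L$. For each $\varepsilon > 0$, introduce
\begin{equation*}
\phi_\alpha^\varepsilon(t) \defineas X_\alpha(t;x_0^2,m_2) + \varepsilon t - X_{i_\alpha}(t;x_0^1,m_1), \qquad \alpha \in \until{n_2},
\end{equation*}
noting $\phi_\alpha^\varepsilon(0) \geq 0$ by hypothesis. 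I aim to show $\phi_\alpha^\varepsilon(t) > 0$ on $(0,\tau)$ for every $\alpha$, and then let $\varepsilon \to 0^+$.

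Suppose to the contrary that $t^*_\varepsilon \defineas \inf\{t \in (0,\tau) : \phi^\varepsilon_\alpha(t) = 0 \text{ for some } \alpha\} < \tau$. At $t^*_\varepsilon$ some $\phi^\varepsilon_\alpha$ vanishes, while $\phi^\varepsilon_\beta(t^*_\varepsilon) \geq 0$ for every $\beta$. The crux is the gap estimate
\begin{equation*}
y^1_{i_\alpha}(t^*_\varepsilon) \;\leq\; X^1_{i_{\alpha+1}}(t^*_\varepsilon) - X^1_{i_\alpha}(t^*_\varepsilon) \;\leq\; X^2_{\alpha+1}(t^*_\varepsilon) - X^2_\alpha(t^*_\varepsilon) \;=\; y^2_\alpha(t^*_\varepsilon),
\end{equation*}
where the first step holds because any system-1 vehicles with indices strictly between $i_\alpha$ and $i_{\alpha+1}$ can only shrink vehicle $i_\alpha$'s gap, and the second step uses $\phi^\varepsilon_{\alpha+1}(t^*_\varepsilon) \geq 0$ combined with $\phi^\varepsilon_\alpha(t^*_\varepsilon) = 0$ to cancel the $\varepsilon t^*_\varepsilon$ offsets. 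Since both gaps lie in $[0,L] \subseteq [0,1]$, the monotonicity of $z \mapsto z^{m_1}$ and the elementary inequality $z^{m_1} \leq z^{m_2}$ for $z \in [0,1]$ and $m_1 \geq m_2$ yield
\begin{equation*}
\dot X^1_{i_\alpha}(t^*_\varepsilon) = y_{i_\alpha}^1(t^*_\varepsilon)^{m_1} \leq y_\alpha^2(t^*_\varepsilon)^{m_1} \leq y_\alpha^2(t^*_\varepsilon)^{m_2} = \dot X^2_\alpha(t^*_\varepsilon),
\end{equation*}
whence $\dot \phi^\varepsilon_\alpha(t^*_\varepsilon) \geq \varepsilon > 0$. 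But $\phi^\varepsilon_\alpha(t) > 0$ for $t < t^*_\varepsilon$ together with $\phi^\varepsilon_\alpha(t^*_\varepsilon) = 0$ forces the left derivative at $t^*_\varepsilon$ to be nonpositive, a contradiction. Hence $t^*_\varepsilon = \tau$, and letting $\varepsilon \to 0^+$ gives the lemma.

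The main obstacle is handling the cyclic wrap $\alpha = n_2$: the next common vehicle is $i_1$, and one must instead compare $y^1_{i_{n_2}}(t^*_\varepsilon)$ with $y^2_{n_2}(t^*_\varepsilon) = L + X^2_1(t^*_\varepsilon) - X^2_{n_2}(t^*_\varepsilon)$ using the lifted position $X^1_{i_1}(t^*_\varepsilon) + L$. The strip property $X^1_{n_1}(t) - X^1_1(t) \leq L$ (equivalent to $\sum_j y^1_j \equiv L$) together with $X^1_{i_1}(t) \geq X^1_1(t)$ makes the analogue of the gap estimate go through with the same algebra. The hypothesis $L \leq 1$ is essential for the second power comparison $z^{m_1} \leq z^{m_2}$; without it, the interplay between the two powers $m_1 \geq m_2$ could reverse.
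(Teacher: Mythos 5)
Your proof is correct and takes a different, more self-contained route than the paper's. The paper chains through the intermediate trajectory $X(t;x_0^2,m_1)$ in two stages — fewer vehicles at fixed power, then smaller power at fixed vehicle count — delegating the latter to a type-$K$ comparison lemma (Lemmas~\ref{lem:monotonicity-same-size} and \ref{lem:car-following-type-K}) and proving the former with the same gap estimate and first-contact argument that you use, but closing it by another appeal to Lemma~\ref{lem:monotonicity-same-size}, whose stated equal-dimension hypothesis does not directly cover a comparison between systems of different sizes, so that closure is informal. Your $\varepsilon t$ drift collapses both comparisons into one sweep (the gap estimate absorbs the extra vehicles in system 1, while $z^{m_1}\le z^{m_2}$ on $[0,1]$ absorbs the power difference) and turns the first-contact contradiction into a strict one, $\dot\phi^\varepsilon_\alpha(t^*_\varepsilon)\ge\varepsilon>0$, sidestepping the type-$K$ machinery altogether. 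The paper buys modularity and reuse of its appendix lemmas; you buy a single rigorous argument with a clean contradiction at the contact time. Your explicit treatment of the cyclic wrap $\alpha=n_2$ via the strip property $X^1_{n_1}-X^1_1\le L$ is a detail the paper elides, and you handle it correctly.
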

%
 \begin{proof}
The proof is straightforward when $n_1=n_2$. This is because, in this case, since $y_i \leq L\leq1$,  $m_2 \leq m_1$ implies $y_i^{m_2} \geq y_i^{m_1}$ for all $i \in \until{n_1}$. Using this with Lemmas~\ref{lem:monotonicity-same-size} and \ref{lem:car-following-type-K} gives the result. 
 
In order to prove  the result for $n_2 < n_1$, we show that $X(t; x^1_{0}, m_1) \leq X(t; x^2_{0}, m_1) \leq X(t; x^2_{0}, m_2)$. Note that the second inequality follows from the previous case. Therefore, it remains to prove the first inequality. Let $(i_1, \ldots, i_{n_2})$ be the set of indices of $n_2$ vehicles such that $0 \leq x^2_{0,i_1} \leq \ldots \leq x^2_{0,i_{n_2}} \leq L$. Similarly, let $(i_1, i_1+1, \ldots, i_2, i_2+1, \ldots)$ be the indices of $n_1$ vehicles in the order of increasing coordinates in $x_0^1$. Our assumption on the initial condition implies that $x^1_{0,i_k} \leq x^2_{0,i_k}$ for all $k \in \until{n_2}$. For brevity, let $x^1(t) \equiv X(t;x_0^1,m_1)$, and $x^2(t) \equiv X(t;x_0^2,m_1)$. It is easy to check that, for all $t \in [0,\tau)$, and all $k \in \until{n_2}$, 
\begin{equation}
\label{eq:xdot-ub}
\dot{x}^1_{i_k} = \left(x^1_{i_{k}+1} - x^1_{i_k}\right)^{m_1} \leq \left(x^1_{i_{k+1}} - x^1_{i_k}\right)^{m_1}
\end{equation}
Let $t \in [0,\tau)$ be the first time instant when $x^1_{i_k}(t)=x^2_{i_k}(t)$ for some $k \in \until{n_2}$. Then, recalling $x^1_{i_{k+1}}(t) \leq x^2_{i_{k+1}}(t)$, \eqref{eq:xdot-ub} implies that $\dot{x}_{i_k}^1(t) \leq \left(x^2_{i_{k+1}} - x^2_{i_k} \right)^{m_1} = \dot{x}^2_{i_k}(t)$. The result then follows from Lemma~\ref{lem:monotonicity-same-size}.  
%
%
\qed
 \end{proof}
 
 

Lemma~\ref{lemma:two-size-compare} is used to establish monotonicity of throughput as follows.
\begin{proposition}
\label{prop:queue-length-2m}
For any $L \in (0,1]$, $\varphi \in \Phi$, $\psi \in \Psi$, $\delta \in (0,1)$, $x^1_{0} \in [0,L]^{n_1}$, $x^2_{0} \in [0,L]^{n_2}$, $n_1, n_2 \in \NN$:
$$
x^1_{0} \leq x^2_{0}, \, \, n_2 \leq n_1, \, \,  0 < m_2 \leq m_1 \implies \lambdamax(L,m_1,\varphi,\psi,x^1_0,\delta)  \leq \lambdamax(L,m_2,\varphi,\psi,x^2_0,\delta)
$$
\end{proposition}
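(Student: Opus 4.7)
The plan is to construct a coupling of the two HTQ systems on a common probability space and prove that, under this coupling, the queue length of the first system almost surely dominates that of the second at every time $t\geq 0$. This immediately yields the proposition, since the event $\{N(t;L,m_1,\lambda,\varphi,\psi,x_0^1)<\infty \ \forall\, t\}$ would then be contained in $\{N(t;L,m_2,\lambda,\varphi,\psi,x_0^2)<\infty \ \forall\, t\}$, so every $\lambda$ admissible in Definition~\ref{def:throughput} for the first system is admissible for the second, and taking the supremum gives the desired inequality.

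Concretely, I would drive both systems by the same Poisson arrival process, the same i.i.d.\ arrival locations $\{z_j\}\sim\varphi$, and the same i.i.d.\ travel distances $\{d_j\}\sim\psi$, and work with the unfolded coordinates of \eqref{eq:x-dynamics-in-Rn}, so that a vehicle departs precisely when its unfolded coordinate reaches its target $z_j+d_j$ (or $x_{0,i}+d_i$ for initial vehicles). The central claim to prove by induction on the merged sequence of jump epochs of both systems is the following coupling invariant: at every $t\geq 0$, every vehicle currently in the second system is also in the first, and for each common vehicle its unfolded coordinate in the first system is at most the corresponding unfolded coordinate in the second. The base case at $t=0$ is exactly the hypothesis $x_0^1\leq x_0^2$ with $n_2\leq n_1$.

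Between consecutive jumps, Lemma~\ref{lemma:two-size-compare} applied to the common subsystem (treating the extra vehicles of the first system as legitimate neighbors that enter the common vehicles' dynamics only through their inter-vehicle distances) propagates the invariant; this is exactly the content of the $n_2<n_1$ branch of that lemma. At an arrival, both systems receive the same vehicle at the same location, so it enters the common set with identical coordinates in both, and the invariant is preserved trivially. At a departure of vehicle $j$ in the second system, the common set shrinks but the remaining inequalities are unaffected. Finally, the invariant together with the identical departure targets $z_j+d_j$ forces any first-system departure of a common vehicle to coincide with a simultaneous departure of the same vehicle from the second system: otherwise the invariant at the moment of the first-system departure would force the second-system coordinate to be past the departure threshold already, contradicting the vehicle's current presence there.

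The main obstacle is the bookkeeping at jumps — the set of common vehicles changes over time (shrinking at second-system departures, growing at arrivals), and Lemma~\ref{lemma:two-size-compare} must be reapplied afresh to the new common subsystem after each jump. The key subtlety to verify is that, at a second-system departure, the vehicle that becomes \emph{extra} in the first system does not retrospectively disturb the coordinate comparison among the remaining common vehicles. This is precisely what the $n_2\leq n_1$ case of the lemma captures: additional vehicles in the first system can only slow, never speed, the common vehicles relative to the second. That monotone slowdown is ultimately what makes the coupling argument go through in the nontrivial regime $n_1>n_2$ and delivers the inequality $\lambdamax(L,m_1,\varphi,\psi,x_0^1,\delta)\leq\lambdamax(L,m_2,\varphi,\psi,x_0^2,\delta)$.
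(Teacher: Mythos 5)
Your proposal is correct and takes essentially the same route as the paper: a pathwise coupling driven by a common realization of arrival times, arrival locations, and travel distances, with Lemma~\ref{lemma:two-size-compare} propagating the coordinate-dominance invariant between jumps, and a case analysis at each jump epoch to preserve both coordinate dominance and queue-length dominance. The paper phrases the induction as ``it suffices to show $x^1(\tau)\leq x^2(\tau)$ and $N_s(\tau)\geq N_f(\tau)$ at the first jump $\tau$'' rather than explicitly stating the coupling invariant, but the content is identical, including the key observation that a common vehicle's departure in HTQ-S cannot precede its departure in HTQ-F because both share the same departure target and HTQ-S coordinates trail HTQ-F coordinates.
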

\begin{proof}
For brevity in notation, we refer to the queue corresponding to $m_1$, and initial condition $x_0^1$ as HTQ-S. We refer to the other queue as HTQ-F. Let $\lambda$, $\varphi$ and $\psi$ common to HTQ-S and HTQ-F be given.  Let $x^1(t) \equiv X(t; x^1_0, m_1)$ and $x^2(t) \equiv X(t; x^2_0, m_2)$, and let $N_s(t)$ and $N_f(t)$ be the queue lengths in the two queues at time $t$. It suffices to show that $N_s(t) \geq N_f(t)$ for a given realization of arrival times, arrival locations, and travel distances. In particular, this also implies that the departure locations are also the same for every vehicle, including the vehicles present at $t=0$, in both the queues.  
%
%

Indeed, it is sufficient to show that $x^1(\tau) \leq x^2(\tau)$ and $N_s(\tau)\geq N_f(\tau)$ where $\tau$ is the time of first arrival or departure from either HTQ-S or HTQ-F. Accordingly, we consider two cases, corresponding to whether $\tau$ corresponds to arrival or departure. 

Since $x^1(t) \leq x^2(t)$ for all $t \in [0,\tau)$ from Lemma~\ref{lemma:two-size-compare}, and the departure locations of all the vehicles in HTQ-S and HTQ-F are identical, the first departure from HTQ-S can not happen before the first departure in HTQ-F. Therefore, $N_s(\tau) \geq N_f(\tau)$. Since $x^1(\tau^-) \leq x^2(\tau^-)$, and $x^2(\tau)$ is a subset of $x^2(\tau^-)$, we also have $x^1(\tau) \leq x^2(\tau)$. 

When $\tau$ corresponds to the time of the first arrival, since the arrivals happen at the same location in HTQ-S and HTQ-F, and since $x^1(\tau^-) \leq x^2(\tau^-)$, rearrangement of the indices of the vehicles to include the new arrival at $t=\tau$ implies that $x^1(\tau) \leq x^2(\tau)$. Moreover, since $N_s(\tau^-) \geq N_f(\tau^-)$, and the arrivals happen simultaneously in both HTQ-S and HTQ-F, we have $N_s(\tau) \leq N_f(\tau)$. 
\qed
\end{proof}

\begin{remark}
Proposition \ref{prop:queue-length-2m} establishes monotonicity of throughput only for $L\in(0,1]$. This is consistent with our simulation studies, e.g., as reported in Figure \ref{fig:throughput-simulations}, according to which, the throughput is non-monotonic for large $L$. 
%
\end{remark}
For the analysis of the linear car following model, we exploited the fact that the total service rate of the system is constant. However, for the nonlinear model, i.e., $m \neq 1$, the total service rate depends on the number and relative locations of vehicles. The state dependent service rate of nonlinear models makes the throughput analysis much more complex. In the next section, we find probabilistic bound on the throughput in the super-linear case.
%






\subsection{Throughput Bounds for the Super-linear Case from Busy Period Calculations}\label{subsec:superlinear}
In this section, we derive lower bound on the throughput for the super-linear case. The next result computes a bound on the probability that the queue length of the HTQ satisfies a given upper bound over a given time interval, using the probability distribution functions from \eqref{eq:G-def}.
In Propositions~\ref{prop:nbp-lower-bound} and \ref{prop:nbn-lower-bound}, for the sake of clarity, we add explicit dependence on $\lambda$ to this probability distribution function.

\begin{proposition}\label{prop:nbp-lower-bound}
For any $m>1$, $M\in \mathbb{N}$, $L>0$, $\lambda >0$, $\varphi\in \Phi$, $\psi \in \Psi$, and zero initial condition $x_0=0$, the probability that the queue length is upper bounded by $M$ over a given time interval $[0,T]$ satisfies the following bound: 
\begin{equation}
\label{eq:nbp-lower-bound}
  \Pr \big(N(t) \leq M \quad \forall t \in [0,T] \big)\geq \sup_{r \in \NN} \, \sum_{n=1}^M \int_T^{\infty} G_{r,L^m M^{1-m}}(t,n,\psi,\lambda) \, \de t
  \end{equation}
 \end{proposition}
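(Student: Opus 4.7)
The plan is to reduce the analysis to the constant–service–rate (``M/G/1-like'') setting studied in Section~\ref{sec:busy-period} by a sample-path coupling, and then read off the required probability from the convolution density $G_{r,p}$. The key observation that makes this work is that when $m>1$, Lemma~\ref{lem:service-rate-bounds} gives $s(y)\geq L^m N^{1-m}$, and since $N^{1-m}$ is decreasing in $N$ for $m>1$, the event $\{N(t)\leq M\}$ implies $s(y(t))\geq L^m M^{1-m}=:p$. Thus on the event we care about, HTQ serves workload at least as fast as an auxiliary queue with constant rate $p$.

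I would formalize this via the following steps. First, introduce an auxiliary M/G/1-type queue $Q_p$ with constant service rate $p=L^m M^{1-m}$, driven by the \emph{same} Poisson arrival process and the \emph{same} i.i.d. workloads $\{d_i\}\sim\psi$; let $\hat w(t)$ denote its workload. Second, prove the sample-path domination $w(t)\leq \hat w(t)$ on $\{s\leq t:N(s)\leq M\;\forall s\}$: the two processes share identical upward jumps, and between jumps $\hat w$ decreases at rate $p$ while $w$ decreases at rate $\geq p$, so the difference $\hat w-w$ is nondecreasing on this event. A direct consequence is that whenever $\hat w$ hits zero, so does $w$, i.e., every busy period of the HTQ is contained inside some busy period of $Q_p$.

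Third, I would argue by induction on the busy periods of $Q_p$. Let $\sigma_1<\sigma_1+\beta_1<\sigma_2<\sigma_2+\beta_2<\cdots$ denote their start and end times, and let $n_k-1$ be the number of new arrivals during the $k$-th busy period of $Q_p$. Suppose $N(t)\leq M$ has held up to $\sigma_k$ (so HTQ is idle at $\sigma_k^-$ by the previous paragraph); during $[\sigma_k,\sigma_k+\beta_k]$ the HTQ receives at most $n_k$ arrivals (the initiator plus $n_k-1$ new ones), and since departures only decrement queue length, $N(t)\leq n_k$ throughout this interval. If $n_k\leq M$, the induction continues, and at $\sigma_k+\beta_k$ the coupling gives $w=0$, so HTQ is again idle. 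Therefore the event
\[
E_r(n):=\Bigl\{\,\text{each of the first $r$ busy periods of $Q_p$ has exactly $n-1$ new arrivals and } \textstyle\sum_{k=1}^r\beta_k\geq T\Bigr\}
\]
with $n\leq M$ implies that the $r$-th busy period of $Q_p$ ends no earlier than $\sigma_r+\beta_r=\sum\iota_k+\sum\beta_k\geq T$, and along the way $N(t)\leq n\leq M$ for all $t\in[0,T]$.

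Finally, the events $\{E_r(n)\}_{n=1}^M$ are disjoint, and by the definition of the $r$-fold convolution $G_{r,p}$, $\Pr(E_r(n))=\int_T^{\infty}G_{r,L^m M^{1-m}}(t,n,\psi,\lambda)\,\de t$. Summing over $n$ and then optimizing over $r\in\NN$ yields \eqref{eq:nbp-lower-bound}. The main obstacle I anticipate is the careful bookkeeping in the inductive step: ensuring that the coupling $w\leq\hat w$ is genuinely preserved across arrival jumps and across the transition from the interior of a $Q_p$ busy period to its terminating instant, and verifying that HTQ necessarily empties at $\sigma_k+\beta_k$ under the inductive hypothesis rather than merely having $w=0$ at an intermediate instant. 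Everything else is a routine translation of the event $\{N(t)\leq M\;\forall t\in[0,T]\}$ into busy-period language of $Q_p$, for which Section~\ref{sec:busy-period} has already supplied the probability calculus.
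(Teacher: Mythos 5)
Your argument is correct and follows the paper's overall strategy --- dominate the HTQ by an auxiliary queue with constant service rate $p=L^m M^{1-m}$, then translate the queue-length event into a busy-period event and read off its probability from $G_{r,p}$ --- but the coupling mechanism you use is genuinely different and, in my view, cleaner. The paper introduces a slowed-down \emph{traffic} queue HTQ-s living on the same circle, explicitly assigning per-vehicle speeds so that the aggregate service rate equals $p$ exactly, and then shows by tracking the identifier sets $\mc J$, $\mc J_{s/f}$, $\mc J_{f/s}$ across departure epochs that every vehicle present in the original HTQ is also present in HTQ-s, giving $N(t)\le N_s(t)$; the bound $N_s\le M$ then comes from counting arrivals in each busy period of HTQ-s. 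You bypass the vehicle-level bookkeeping entirely: you couple workloads $w\le\hat w$ to a plain constant-rate queue $Q_p$, conclude HTQ is idle whenever $Q_p$ is, and bound $N(t)$ directly by the arrival count within each $Q_p$ busy period. The one point to state more carefully is the order of dependencies in your induction: within the $k$-th busy period of $Q_p$ you first obtain $N(t)\le n_k\le M$ purely from the arrival count and the fact that HTQ was idle at $\sigma_k^-$; this gives $s(y)\ge p$ on that interval via Lemma~\ref{lem:service-rate-bounds}, which in turn preserves $w\le\hat w$ and forces HTQ to empty at $\sigma_k+\beta_k$. Your second paragraph asserts the workload domination ``on the event'' before that event has been established, which reads circular in isolation, but the third paragraph's induction unwinds it correctly. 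Finally, both proofs share the same two sources of conservatism --- requiring all $r$ busy periods to have the same arrival count $n-1$ (so that the convolution $G_{r,p}$ applies directly) and dropping the interlacing idle times when bounding the coverage of $[0,T]$ --- so your bound matches \eqref{eq:nbp-lower-bound} exactly.
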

 \begin{proof}
Let us denote the current queueing system as HTQ-f. We shall compare queue lengths between HTQ-f and a slower queueing system HTQ-s, which starts from the same (zero) initial condition, and experiences the same realizations of arrival times, locations and travel distances. Let every incoming vehicle into HTQ-s and HTQ-f be tagged with a unique identifier. At time $t$, let $\mc J(t)$ be the set of identifiers of vehicles present both in HTQ-s and HTQ-f, $\mc J_{s/f}(t)$ be the set of identifiers of vehicles present only in HTQ-s, and $\mc J_{f/s}(t)$ be the set of identifiers of vehicles present only in HTQ-f.
Let $v^f_i$ denote the speed of the vehicle in HTQ-f with identifier $i \in \mc J(t) \cup \mc J_{f/s}(t)$, as determined by the car-following behavior underlying \eqref{eq:dynamics-moving-coordinates}. The vehicle speeds in HTQ-s are not governed by the car following behavior, but are rather related to the speeds of vehicles in HTQ-f as: 
 \begin{equation}
 \label{eq:htq-s-speed}
		v^s_i(t) =\left\{\begin{array}{ll}
		\ds v^f_i(t) \frac{p}{v^f(t)} \frac{|\mc J(t)|}{|\mc J(t)| + |\mc J_{s/f}(t)|} 	&i \in \mc J(t)\\[15pt]
		\ds \frac{p}{|\mc J(t)| + |\mc J_{s/f}(t)|}& i \in \mc J_{s/f}(t)\,\end{array}\right.
	\end{equation}
%
where $v^f(t):=\sum_{i \in \mc J(t)}v_i^f(t)$ is the sum of speeds of vehicles in HTQ-f that are also present in HTQ-s at time $t$, and $p$ is a parameter to be specified. Indeed, note that $\sum_{i \in \mc J(t) \cup \mc J_{s/f}(t)} v_i^s(t) \equiv p$, i.e., $p$ is the (constant) service rate of HTQ-s. 

Consider a realization where the number of arrivals into HTQ-s with $p=L^m M^{1-m}$ during any busy period overlapping with $[0,T]$ does not exceed $M$. We refer to such a realization as \emph{event} in the rest of the proof.  Since the maximum queue length during a busy period is trivially upper bounded by the number of arrivals during that busy period, conditioned on the event, we have 
\begin{equation}
\label{eq:slow-queue-queue-length-bound}
N_s(t) \leq M, \qquad t \in [0,T]
\end{equation}

Consider the union of departure epochs from HTQ-s and HTQ-f in $[0,T]$: $0=\tau_0 \leq \tau_1 \leq \ldots$. If $\mc J_{f/s}(\tau_k)=\emptyset$ for some $k \geq 0$, then $\mc J_{f/s}(t)=\emptyset$ for all $t \in (\tau_k,\tau_{k+1})$. Hence, the service rate for HTQ-f over the interval $(\tau_k,\tau_{k+1})$ is $v^f(t)$, which, conditioned on the event, is lower bounded by $L^m M^{1-m}=p$ by Lemma \ref{lem:service-rate-bounds}.
Therefore, $p/v^f(t) \leq 1$ over $(\tau_k,\tau_{k+1})$, and hence \eqref{eq:htq-s-speed} implies that all the vehicles with identifiers in $\mc J_f$ will travel slower in HTQ-s in comparison to HTQ-f. In particular, this implies that $\mc J_{f/s}(\tau_{k+1})=\emptyset$. This, combined with the fact that $\mc J_{f/s}(\tau_0)=\emptyset$ (both the queues start from the same initial condition), we get that, conditioned on the event, $\mc J_{s/f}(t) \equiv \emptyset$, and hence $N(t) \leq N_s(t)$ over $[0,T]$. Combining this with \eqref{eq:slow-queue-queue-length-bound} gives that, conditioned on the event, $N(t) \leq M$ over $[0,T]$.

We now compute the probability of the occurrence of the event using busy period calculations from Section~\ref{sec:busy-period}. The event can be categorized by the maximum number of busy periods, say $r \in \NN$, that overlap with $[0,T]$, i.e., the $r$-th busy period ends after time $T$ (and each of these busy periods has at most $M$ arrivals). Since these busy periods are interlaced with idle periods, the probability of the $r$-th busy period ending after time $T$ is lower bounded by the probability that the sum of the durations of $r$ busy periods is at least $T$. \eqref{eq:G-def} implies that the latter quantity is equal to $\sum_{n=1}^M \int_T^{\infty} G_{r,L^m M^{1-m}}(t,n,\psi,\lambda) \, \de t$. The proposition then follows by noting that this is true for any $r \in \NN$.
\qed
 \end{proof}
 
 \begin{remark}
In the proof of Proposition~\ref{prop:nbp-lower-bound}, when deriving probabilistic upper bound on the queue length over a given time horizon $[0,T]$, we neglected the idle periods in $[0,T]$. This introduces conservatism in the bound on the right hand side of \eqref{eq:nbp-lower-bound}. Since the idle period durations are distributed independently and identically according to an exponential random variable (since the arrival process is Poisson), one could incorporate them into \eqref{eq:nbp-lower-bound} by taking convolution of $G$ with idle period distributions. Our choice for not doing so here is to ensure conciseness in the presentation of bounds in \eqref{eq:nbp-lower-bound}. The resulting conservatism is also present in Proposition~\ref{prop:nbn-lower-bound}, and carries over to Theorems~\ref{thm:superlinear-bound-empty} and \ref{thm:superlinear-bound-non-empty}, as well as to the corresponding simulations reported in Figures~\ref{fig:lambda-m-empty}, \ref{fig:superlinear-large-L} and \ref{fig:phase-transition-non-empty}. 
%
%
\end{remark}
 
 The next result generalizes Proposition~\ref{prop:nbp-lower-bound} for non-zero initial condition. Note that the non-zero initial condition only affects the first busy period; all subsequent busy periods will necessarily start from with zero initial condition.

 \begin{proposition}\label{prop:nbn-lower-bound}
 For any $m>1$, $M\in\mathbb{N}$, $L>0$, $\lambda >0$, $\varphi\in \Phi$, $\psi \in \Psi$, initial condition $x_0 \in [0,L]^{n_0}$, $n_0 \in \NN$, with associated workload $w_0>0$, the probability that the queue length is upper bounded by $M+n_0$ over a given time interval $[0,T]$ satisfies the following: 
  $$\Pr \big(N(t) \leq M + n_0 \quad \forall t \in [0,T] \big)\geq \sup_{r \in \NN} \, \sum_{n=1}^M \int_T^{\infty} G_{L^m (M+n_0)^{1-m}}(\delta_{w_0}) * G_{r-1,L^m M^{1-m}}(\psi) (t,n,\lambda) \, \de t$$
 \end{proposition}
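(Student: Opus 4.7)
The plan is to mimic the coupling proof of Proposition~\ref{prop:nbp-lower-bound}, modifying the comparison queue HTQ-s to use two piecewise-constant service rates: $p_1 = L^m (M+n_0)^{1-m}$ during its first busy period (which starts with the $n_0$ initial vehicles and total workload $w_0$) and $p_2 = L^m M^{1-m}$ during each subsequent busy period (all of which start from zero workload). The internal vehicle speeds in HTQ-s are still prescribed by \eqref{eq:htq-s-speed}, with $p$ replaced by $p_1$ or $p_2$ according to which busy period is currently active, and HTQ-s shares with HTQ-f the same arrival times and locations, the same travel distances, and the same initial state $x_0$.

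I then condition on the event that every busy period of HTQ-s overlapping $[0,T]$ contains at most $M$ new arrivals. On this event, $N_s(t) \leq M + n_0$ throughout the first busy period and $N_s(t) \leq M$ during each subsequent one, so $N_s(t) \leq M + n_0$ on $[0,T]$. Running the same induction as in Proposition~\ref{prop:nbp-lower-bound} over the combined departure epochs of HTQ-s and HTQ-f would establish that $\mc J_{f/s}(t) = \emptyset$ throughout $[0,T]$: whenever $\mc J_{f/s}$ is empty at the previous epoch, $N(t) \leq N_s(t) \leq M + n_0$ on the next inter-epoch interval, so Lemma~\ref{lem:service-rate-bounds} gives $v^f(t) \geq p_1$ (in the first busy period) or $v^f(t) \geq p_2$ (in subsequent ones), and \eqref{eq:htq-s-speed} then makes the HTQ-s speed of every shared vehicle at most that of its HTQ-f counterpart. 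Hence $N(t) \leq N_s(t) \leq M + n_0$ on the event.

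To lower-bound the probability of this event, I would further restrict to the sub-event that, for a fixed $r \in \NN$, the first $r$ busy periods of HTQ-s have total duration at least $T$ and each contributes the same number $n - 1 \in \{0,\ldots,M-1\}$ of new arrivals, then sum over $n$ and take the supremum over $r$. The first busy period has deterministic initial workload $w_0$ and constant service rate $p_1$, so by \eqref{eq:G-def} its joint duration/arrival-count density is $G_{p_1}(t,n,\delta_{w_0},\lambda)$; the remaining $r - 1$ busy periods each start from zero workload with initiator drawn from $\psi$ at constant rate $p_2$, so their joint density is $G_{r-1,p_2}(t,n,\psi,\lambda)$. Convolving over $t$, integrating the tail over $[T,\infty)$, summing over $n \in \{1,\ldots,M\}$, and taking the supremum over $r$ produces exactly the displayed right-hand side.

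The main subtlety is that the HTQ-s service rate changes at the end of its first busy period, which forces a recheck of the coupling induction at that instant. This is not ultimately an obstacle: the end of the first busy period of HTQ-s is precisely when HTQ-s becomes empty, and by the induction up to that point HTQ-f has also emptied, so both queues restart from the zero state and the induction resumes verbatim with the new rate $p_2$ and initiator distribution $\psi$. This clean decoupling between the first and the subsequent busy periods is exactly what legitimizes interpreting the right-hand side as the convolution of $G_{p_1}(\delta_{w_0})$ with $G_{r-1,p_2}(\psi)$.
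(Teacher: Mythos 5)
Your proposal is correct and takes essentially the same approach as the paper, which merely says the proof mirrors that of Proposition~\ref{prop:nbp-lower-bound} with service rate $L^m(M+n_0)^{1-m}$ in the first busy period and $L^m M^{1-m}$ thereafter, and event threshold $M+n_0$ (first period) and $M$ (subsequent). You have usefully spelled out the one non-trivial point that the paper leaves implicit: since the coupling invariant $\mathcal{J}_{f/s}(t)=\emptyset$ forces HTQ-f to be empty whenever HTQ-s is, both queues restart simultaneously from the empty state at the end of HTQ-s's first busy period, so switching to rate $p_2$ and initiator distribution $\psi$ at that instant causes no break in the induction.
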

 \begin{proof}
The proof is similar to the proof of Proposition \ref{prop:nbp-lower-bound}; however, since we consider $M$ number of new arrivals in each of the busy periods, the \emph{event} of interest is when the queue length in HTQ-s does not exceed $M+n_0$ and $M$ in the first and subsequent busy periods, respectively, while operating with constant service rates $L^m (M+n_0)^{1-m}$ and $L^m M^{1-m}$, respectively. 
\qed
 \end{proof}

 We shall use Propositions~\ref{prop:nbp-lower-bound} and \ref{prop:nbn-lower-bound} to establish probabilistic lower bound for a finite time horizon version of the throughput defined in Definition \ref{def:throughput}: for $T>0$, let 
 \begin{equation*}
\label{eq:throughput-def-finite-horizon}
\lambdamax(L,m,\varphi,\psi,x_0,\delta,T):= \sup \left\{\lambda \geq 0: \, \Pr \left( N(t;L, m, \lambda, \varphi, \psi, x_0) < + \infty, \quad \forall t \in [0,T]  \right) \geq 1 - \delta  \right\}.
\end{equation*}
 
 

\begin{theorem}\label{thm:superlinear-bound-empty}
 For $L>0$, $m>1$, $\varphi\in\Phi$, $\psi\in\Psi$, $\delta\in(0,1)$, $T>0$, zero initial condition $x_0=0$, 
 \begin{equation}
 \label{eq:sublinear-throughput-zero-initial-condition}
 \lambda_{\max}(L, m, \varphi, \psi, x_0,\delta,T)\geq \sup_{M \in \NN}\;\sup \Big\{\lambda \geq 0 \; \Big | \sup_{r \in \NN} \, \sum_{n=1}^M \int_T^{\infty} G_{r,L^m M^{1-m}}(t,n,\psi,\lambda) \, \de t \geq1-\delta \Big\} 
 \end{equation}
 \end{theorem}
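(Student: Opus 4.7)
The plan is to derive this theorem essentially as a direct corollary of Proposition~\ref{prop:nbp-lower-bound}, combined with the finite-horizon throughput definition. The key observation is that the event $\{N(t) \leq M \text{ for all } t \in [0,T]\}$ implies the event $\{N(t) < +\infty \text{ for all } t \in [0,T]\}$, so any probabilistic lower bound on the former is also a lower bound on the latter.

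First I would fix an arbitrary $M \in \NN$ and an arbitrary $\lambda \geq 0$ satisfying the inner condition, namely
\begin{equation*}
\sup_{r \in \NN} \sum_{n=1}^M \int_T^\infty G_{r, L^m M^{1-m}}(t,n,\psi,\lambda) \, \de t \, \geq \, 1-\delta.
\end{equation*}
By Proposition~\ref{prop:nbp-lower-bound} applied with this choice of $M$ and $\lambda$ (and with $x_0=0$), we have
\begin{equation*}
\Pr\bigl(N(t; L, m, \lambda, \varphi, \psi, x_0) \leq M \text{ for all } t \in [0,T]\bigr) \, \geq \, 1-\delta.
\end{equation*}
Since $M$ is finite, the inclusion of events $\{N(t) \leq M \, \forall t \in [0,T]\} \subseteq \{N(t) < +\infty \, \forall t \in [0,T]\}$ gives
\begin{equation*}
\Pr\bigl(N(t; L, m, \lambda, \varphi, \psi, x_0) < +\infty \text{ for all } t \in [0,T]\bigr) \, \geq \, 1-\delta.
\end{equation*}

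Next, by the finite-horizon throughput definition, every such $\lambda$ lies in the set whose supremum defines $\lambda_{\max}(L,m,\varphi,\psi,x_0,\delta,T)$. Hence $\lambda_{\max}(L,m,\varphi,\psi,x_0,\delta,T)$ is an upper bound on all such $\lambda$, which yields
\begin{equation*}
\lambda_{\max}(L,m,\varphi,\psi,x_0,\delta,T) \, \geq \, \sup\Bigl\{\lambda \geq 0 \; \Big| \; \sup_{r \in \NN} \sum_{n=1}^M \int_T^\infty G_{r, L^m M^{1-m}}(t,n,\psi,\lambda) \, \de t \geq 1-\delta \Bigr\}.
\end{equation*}
Finally, since $M \in \NN$ was arbitrary, taking the supremum over $M$ on the right-hand side preserves the inequality and produces the claimed bound.

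Because the heavy lifting has already been done in Proposition~\ref{prop:nbp-lower-bound}, there is no real analytic obstacle here; the only subtlety is to ensure that the set of admissible $\lambda$ in the inner supremum is nonempty for the bound to be informative (it always contains $\lambda=0$ since letting $\lambda \to 0$ makes the busy period terminate quickly with small number of arrivals, but an informative bound requires $M$ to be chosen large enough that the sup over $r$ actually exceeds $1-\delta$ for positive $\lambda$). No new machinery, convexity argument, or coupling is needed beyond invoking the proposition and the definition.
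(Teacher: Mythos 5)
Your proof is correct and follows exactly the route the paper intends: the paper's own proof is simply the one-line remark ``Follows from Proposition~\ref{prop:nbp-lower-bound},'' and your write-up spells out that derivation (fix $M$ and $\lambda$ in the inner set, invoke the proposition, use $\{N \leq M\} \subseteq \{N < +\infty\}$, then take the two suprema). No discrepancy.
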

 \begin{proof}
Follows from Proposition \ref{prop:nbp-lower-bound}.
\qed
 \end{proof}

  \begin{theorem}\label{thm:superlinear-bound-non-empty}
 For $L>0$, $m>1$, $\varphi\in\Phi$, $\psi\in\Psi$, $\delta\in(0,1)$, $T>0$, initial condition $x_0 \in [0,L]^{n_0}$, $n_0 \in \NN$, with associated workload $w_0>0$, 
 \begin{multline*}
 \lambda_{\max}(L, m, \varphi, \psi, x_0,\delta,T) \\ \geq \sup_{M \in \NN}\;\sup \Big \{\lambda>0 \; \Big | \sup_{r \in \NN} \, \sum_{n=1}^M \int_T^{\infty} G_{L^m (M+n_0)^{1-m}}(\delta_{w_0}) * G_{r-1,L^m M^{1-m}}(\psi) (t,n,\lambda)\geq 1-\delta \Big\} 
 \end{multline*}
 \end{theorem}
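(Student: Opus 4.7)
The plan is to mirror the one-line proof of Theorem~\ref{thm:superlinear-bound-empty} and deduce this result directly from Proposition~\ref{prop:nbn-lower-bound}, which already does the heavy lifting of translating the busy-period convolution estimates into a probabilistic bound on the queue length over $[0,T]$ for non-zero initial workload $w_0$. The only thing left to do is to convert a bound of the form ``queue length $\le M+n_0$'' into the ``queue length is finite'' statement that appears in the definition of $\lambda_{\max}(L,m,\varphi,\psi,x_0,\delta,T)$, and then take the appropriate suprema.

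Concretely, I would proceed as follows. Fix an arbitrary $M \in \NN$ and let $\lambda>0$ belong to the inner feasible set, i.e.,
\[
\sup_{r \in \NN} \, \sum_{n=1}^M \int_T^{\infty} G_{L^m (M+n_0)^{1-m}}(\delta_{w_0}) * G_{r-1,L^m M^{1-m}}(\psi) (t,n,\lambda)\, \de t \;\geq\; 1-\delta.
\]
By Proposition~\ref{prop:nbn-lower-bound} applied with this same $M$ and $\lambda$, we get
\[
\Pr\bigl(N(t;L,m,\lambda,\varphi,\psi,x_0) \le M+n_0 \ \ \forall t \in [0,T]\bigr) \;\geq\; 1-\delta.
\]
Since $M+n_0 < \infty$, the event $\{N(t) \le M+n_0 \text{ for all } t \in [0,T]\}$ is contained in the event $\{N(t) < +\infty \text{ for all } t \in [0,T]\}$, so the latter also has probability at least $1-\delta$. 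Therefore $\lambda$ satisfies the condition in the definition of $\lambda_{\max}(L,m,\varphi,\psi,x_0,\delta,T)$, and in particular $\lambda \le \lambda_{\max}(L,m,\varphi,\psi,x_0,\delta,T)$.

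Finally, since this holds for every admissible $\lambda$ and every $M \in \NN$, taking the supremum first over $\lambda$ (for fixed $M$) and then over $M$ produces the lower bound stated in the theorem. There is no real obstacle here beyond bookkeeping, because Proposition~\ref{prop:nbn-lower-bound} has already absorbed the technical work of coupling HTQ with a constant-service-rate auxiliary queue HTQ-s (with service rate $L^m(M+n_0)^{1-m}$ for the first busy period reflecting the initial $n_0$ vehicles, and $L^m M^{1-m}$ for subsequent busy periods) via Lemma~\ref{lem:service-rate-bounds}, and into treating the concatenation of busy periods through the convolution $G_{L^m(M+n_0)^{1-m}}(\delta_{w_0}) * G_{r-1,L^m M^{1-m}}(\psi)$. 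The only conceptual point worth flagging, as in the remark following Proposition~\ref{prop:nbp-lower-bound}, is that idle periods are dropped in the lower bound on the probability that $r$ consecutive busy periods straddle $[0,T]$, which is why the resulting throughput bound is conservative but valid.
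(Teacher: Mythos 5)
Your proposal is correct and matches the paper's own proof, which is simply the statement ``Follows from Proposition~\ref{prop:nbn-lower-bound}.'' You have just spelled out the bookkeeping (finite bound on $N(t)$ implies $N(t)<+\infty$, then take suprema over $\lambda$ and $M$) that the paper leaves implicit.
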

 \begin{proof}
 Follows from Proposition \ref{prop:nbn-lower-bound}.
 \qed
 \end{proof}
 
 \begin{remark}
 In Theorems~\ref{thm:superlinear-bound-empty} and \ref{thm:superlinear-bound-non-empty}, we implicitly assume the rather standard convention that supremum over an empty set is zero.
 \end{remark}

\subsection{Throughput Bounds under Batch Release Control Policy}
In this section, we consider a \emph{time-perturbed} version  of the arrival process. 
For a given realization of arrival times, $\{t_1,t_2,\cdots\}$, consider a perturbation map $t_i^{\prime} \equiv t_i^{\prime}(t_1,\ldots,t_i)$ satisfying $t_i^{\prime}\geq t_i$ for all $i$, which prescribes the perturbed arrival times. The magnitude of perturbation is defined as $\eta := E\left(t_i^{\prime}-t_i\right)$, where the expectation is with respect to the Poisson process with rate $\lambda$ that generates the arrival times.

We prove boundedness of the queue length under a specific perturbation map. This perturbation map is best understood in terms of a control policy that governs the release of arrived vehicles into HTQ. In order to clarify the implementation of the control policy, we decompose the proposed HTQ into two queues in series: denoted as HTQ1 and HTQ2, both of which have the same geometric characteristics as HTQ, i.e., a circular road segment of length $L$ (see Figure \ref{fig:htq1-htq2} for illustrations). The original arrival process for HTQ, i.e. spatio-temporal Poisson process with rate $\lambda$ and spatial distribution $\varphi$ is now the arrival process for HTQ1. Vehicles remain stationary at their arrival locations in HTQ1, until released by the control policy into HTQ2. Upon released into HTQ2, vehicles travel according to \eqref{eq:dynamics-moving-coordinates} until they depart after traveling a distance that is sampled from $\psi$, as in the case of HTQ. The time of release of the vehicles into HTQ2 correspond to their perturbed arrival times $t_1^{\prime}, t_2^{\prime}, \ldots$. The average waiting time in HTQ1 under the given release control policy is then the magnitude of perturbation in the arrival times.

%

\begin{figure}[htb!]
\begin{center}
\includegraphics[width=14cm]{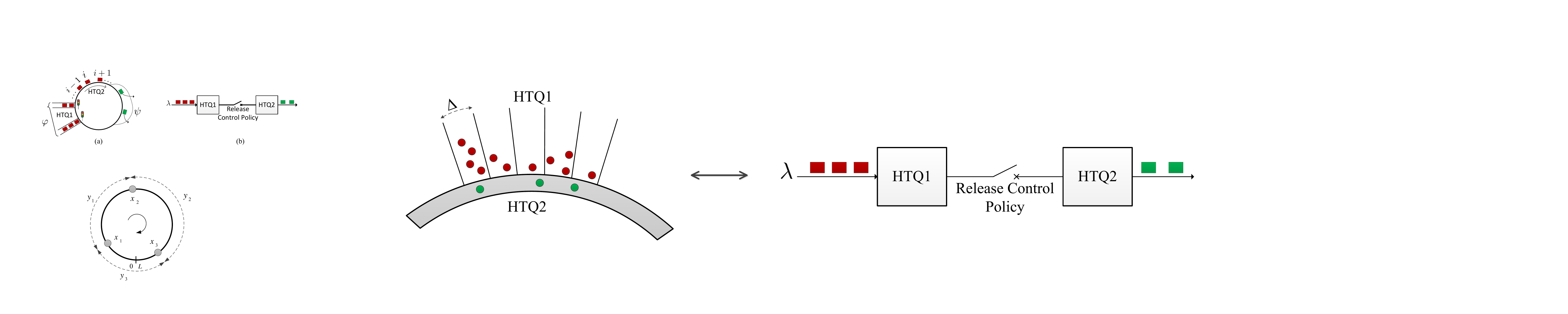} 
\caption{Decomposition of HTQ into HTQ1 and HTQ2 in series.}
\label{fig:htq1-htq2}
\end{center}
\end{figure}

We consider the following class of release control policy, for which we recall from the problem setup in Section~\ref{sec:problem-formulation} that $\text{supp}(\varphi)=[0,\ell]$ for some $\ell \in [0,L]$. 


\begin{definition}[Batch Release Control Policy $\pi^b_\triangle$]
\label{def:batch-release-control-policy}
Divide $[0,\ell]$ into sub-intervals, each of length $\triangle$, enumerated as $1, 2, \ldots, \lceil \frac{\ell}{\triangle} \rceil$. Let $T_1$ be the first time instant when HTQ2 is empty. At time $T_1$, release one vehicle each, if present, from all odd-numbered sub-intervals in $\{1, 2, \ldots, \lceil \frac{\ell}{\triangle} \rceil\}$ simultaneously into HTQ2. Let $T_2$ be the next time instant when HTQ2 is empty. At time $T_2$, release one vehicle each, if present, from all even-numbered sub-intervals in $\{1, 2, \ldots, \lceil \frac{\ell}{\triangle} \rceil\}$ simultaneously into HTQ2. Repeat this process of alternating between releasing one vehicle each from odd and even-numbered sub-intervals every time that HTQ2 is empty.
\end{definition}


\begin{remark}
\begin{enumerate}
\item Under $\pi^b_{\triangle}$, when vehicles are released into HTQ2, the inter-vehicle distances in the front and rear of each vehicle being released is at least equal to $\triangle$. 
\item The order in which vehicles are released into HTQ2 from HTQ1 under $\pi^b_{\triangle}$ may not be the same as the order of arrivals into HTQ1.
\end{enumerate}
\end{remark}

In the next two sub-sections, we analyze the performance of the batch release control policy for sub-linear and super-linear cases. 
\subsubsection{The Sub-linear Case}
In this section, we derive a lower bound on throughput when $m\in(0,1)$. We first derive a trivial lower bound in Proposition \ref{prop:throughput-bound-sublinear} implied by Lemma \ref{lem:service-rate-jumps} and Remark \ref{rem:service-rate-jump-arrival}. Next, we improve this lower bound in Theorem \ref{thm:main-sub-linear} under a under a batch release control policy, $\pi_{\triangle}^b$. 
\begin{proposition}
\label{prop:throughput-bound-sublinear}
For any $L > 0$, $m\in(0,1)$, $\varphi \in \Phi$, $\psi \in \Psi$, $x_0 \in [0,L]^{n_0}$, $n_0 \in \NN$:
$$
  \lambdamax(L,m,\varphi,\psi,x_0,\delta=0) \geq L^m/\bar{\psi} 
$$
\end{proposition}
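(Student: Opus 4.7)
The plan is to mirror the structure of the proof of Theorem~\ref{thm:stable} for the linear case, with the lower bound on the service rate supplied by Lemma~\ref{lem:service-rate-bounds} playing the role that the constant service rate $L$ did in the linear case. Specifically, part~(2) of Lemma~\ref{lem:service-rate-bounds} guarantees that, for $m\in(0,1)$, $s(y)\geq L^m$ for every inter-vehicle configuration $y\in \mc S_N^L$ and every $N\in\NN$. Hence, whenever the HTQ is non-empty, its workload is depleted at rate at least $L^m$, uniformly in the state.

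Following the decomposition in \eqref{eq:workload-linear}, I would first write
$$
w(t) \;\leq\; w_0 + r(t) - L^m\bigl(t-\mathcal{I}(t)\bigr),
$$
where $r(t):=\sum_{i=1}^{A(t)} d_i$ is the cumulative workload brought in by time $t$ and $\mathcal{I}(t)$ is the cumulative idle time. The only change from \eqref{eq:workload-linear} is that equality becomes inequality, which is all that is needed for an upper bound on throughput violations.

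With this bound in hand, the rest of the argument proceeds almost verbatim as in Theorem~\ref{thm:stable}. I would argue by contradiction: suppose that for some $\lambda<L^m/\bar\psi$ and some initial data $(x_0,n_0)$, the queue length grows unbounded with positive probability. Since the arrival process is Poisson and the workloads $\{d_i\}$ are i.i.d., one may, without loss of generality, reduce to the case $\mathcal{I}(t)\equiv 0$, as in Theorem~\ref{thm:stable}. The renewal-reward SLLN invoked in \eqref{eq:slln-renewal} then gives $r(t)/t \to \lambda\bar\psi$ almost surely; fixing any $\varepsilon\in(0,L^m-\lambda\bar\psi)$, there exists $t_0\geq 0$ such that $r(t)\leq t(\lambda\bar\psi+\varepsilon/2)< t(L^m-\varepsilon/2)$ for all $t\geq t_0$, almost surely. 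Plugging this into the workload bound yields $w(t) < w_0 - \varepsilon t/2$ for all $t\geq t_0$, violating $w(t)\geq 0$. The contradiction forces the workload, and hence the queue length, to return to zero in finite time, producing the claimed lower bound $\lambdamax \geq L^m/\bar\psi$.

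No substantial obstacle is anticipated: the entire argument reuses machinery already developed for the linear case, and the only genuinely new ingredient is the uniform-in-$N$ lower bound $s(y)\geq L^m$ from Lemma~\ref{lem:service-rate-bounds}, which lets one replace $L$ in \eqref{eq:workload-linear}--\eqref{eq:s-t-upperbound} by $L^m$ throughout. Unlike the super-linear case (Section~\ref{subsec:superlinear}), one does not need busy-period coupling or any finite-horizon truncation, precisely because the service rate lower bound holds uniformly in the configuration.
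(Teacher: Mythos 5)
Your proposal is correct and follows essentially the same route as the paper: obtain a uniform lower bound $s(y)\geq L^m$ on the service rate, feed it into the workload decomposition, and repeat the SLLN/contradiction argument from the linear case. The only cosmetic difference is the source of the lower bound --- you invoke part~(2) of Lemma~\ref{lem:service-rate-bounds} directly, whereas the paper derives it from Remark~\ref{rem:service-rate-jump-arrival} together with the single-vehicle base case; both yield the identical inequality and your phrasing is arguably the cleaner of the two.
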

\begin{proof}
Remark \ref{rem:service-rate-jump-arrival} implies that, for $m \in (0,1)$, the service rate does not decrease due to arrivals. Therefore, a simple lower bound on the service rate for any state is the service rate when there is only one vehicle in the system, i.e., $L^m$. Therefore,  
%
the workload process is upper bounded as
$w(t) = w_0+r(t)-\int_0^ts(z)\de z \leq w_0+r(t) -L^m(t-\mathcal{I}(t)), \quad \forall t\geq 0$, 
where $r(t)$ and $\mathcal{I}(t)$ denote the renewal reward and the idle time processes, respectively, as introduced in the proof of Proposition \ref{prop:unstable}. Similar to the proof of Proposition \ref{prop:unstable}, it can be shown that, if $\lambda < L^m/\bar{\psi}$, then the workload, and hence the queue length, goes to zero in finite time with probability one. 
%
\qed
\end{proof}

Next, we establish better throughput guarantees than Proposition~\ref{prop:throughput-bound-sublinear},  under a batch release control policy, $\pi_{\triangle}^b$.
The next result characterizes the time interval between release of successive batches into HTQ2 under $\pi_\triangle^b$.

\begin{lemma}
\label{eq:successive-release-time-difference}
For given $\lambda>0$, $\triangle > 0$, $\varphi \in \Phi$, $\psi \in \Psi$ with $\supp(\psi)=[0,R]$, $R>0$, $m\in(0,1)$, $x_0\in[0,L]^{n_0}$, $L>0$, $n_0\in \mathbb{N}$, let $T_1$, $T_2$, $\ldots$ denote the random variables corresponding to time of successive batch releases into HTQ2 under $\pi_{\triangle}^b$. Then, $T_1 \leq \frac{n_0R}{L^m}$, $T_{i+1}-T_i \leq R/\triangle^m$ for all $i \geq 1$, and $\ymin(t) \geq \triangle$ for all $t \geq T_1$.
\end{lemma}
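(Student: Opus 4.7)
The plan is to establish the three claims in the order (1), (3), (2), since the persistent lower bound on $\ymin$ in (3) is the structural fact that immediately drives (2). The key ingredient for (1) is Lemma~\ref{lem:service-rate-bounds}, the key ingredient for (3) is Lemma~\ref{lem:vehicle-distance-monotonicity} together with the definition of $\pi^b_\triangle$, and (2) then follows from a per-vehicle speed lower bound.

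\textbf{Claim~(1).} On $[0,T_1)$ no vehicles are released into HTQ2, so HTQ2 contains only the surviving subset of the $n_0$ initial vehicles. The initial workload is at most $n_0 R$, since each initial vehicle has remaining travel distance in $[0,R]$. For $m\in(0,1)$, Lemma~\ref{lem:service-rate-bounds} gives $s(y)\ge L^m$ whenever the queue is nonempty. Since the workload is continuous at departures and decreases at rate at least $L^m$ between jumps, it must hit zero by time $n_0 R/L^m$, yielding $T_1\le n_0 R/L^m$.

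\textbf{Claim~(3).} At each release epoch $T_i$, the released batch consists of at most one vehicle per odd- or per even-numbered sub-interval of $[0,\ell]$, each sub-interval being of length $\triangle$. Thus any two consecutive released vehicles around the circle are separated by at least one entirely skipped sub-interval of length $\triangle$ (with the wrap-around gap further incorporating the dead zone $[\ell,L]$). Since HTQ2 is empty just before $T_i$, this gives $\ymin(T_i)\ge\triangle$. On $(T_i,T_{i+1})$, Lemma~\ref{lem:vehicle-distance-monotonicity} gives $\tfrac{d}{dt}\ymin\ge 0$, and at any intermediate departure the two inter-vehicle distances flanking the departing vehicle merge into their sum, which is at least either one, so $\ymin$ does not drop at departures either. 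Iterating over $i$, $\ymin(t)\ge\triangle$ for all $t\ge T_1$.

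\textbf{Claim~(2) and main obstacle.} By Claim~(3), every vehicle alive on $[T_i,T_{i+1})$ has instantaneous speed $y_j^m(t)\ge\triangle^m$, so it departs within time $R/\triangle^m$ of $T_i$; since the batch contains no vehicle released after $T_i$, HTQ2 must be empty by $T_i+R/\triangle^m$, giving $T_{i+1}-T_i\le R/\triangle^m$. The main obstacle is Claim~(3), and specifically the two monotonicity facts for $\ymin$ that need separate justification: (a) the snapshot bound at the \emph{multi-vehicle} arrival epoch, which relies on the combinatorial spacing design of $\pi^b_\triangle$ together with circular wrap-around and the support of $\varphi$; and (b) the fact that $\ymin$ is non-decreasing at a departure, an elementary but new observation that does not follow directly from the service-rate estimates of Lemma~\ref{lem:service-rate-jumps} and should be recorded explicitly.
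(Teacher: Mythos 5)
Your proof is correct and follows essentially the same route as the paper: bound $T_1$ via initial workload $\leq n_0 R$ and the $L^m$ service-rate floor from Lemma~\ref{lem:service-rate-bounds}, establish the persistent bound $\ymin \geq \triangle$ after $T_1$, and deduce the per-batch clearing time $R/\triangle^m$ from the per-vehicle speed floor $\triangle^m$. Where you go slightly beyond the paper is in making two points explicit that the paper's one-sentence appeal to Lemma~\ref{lem:vehicle-distance-monotonicity} glosses over: (a) that the combinatorial odd/even spacing in $\pi^b_\triangle$ guarantees the snapshot bound $\ymin(T_i)\geq\triangle$ at release epochs (the paper relies on the remark after Definition~\ref{def:batch-release-control-policy} rather than re-deriving it), and (b) that $\ymin$ does not drop at intermediate departures because the two flanking gaps merge into their sum — a fact not covered by Lemma~\ref{lem:vehicle-distance-monotonicity} (which handles only the flow between jumps) nor by the service-rate bounds of Lemma~\ref{lem:service-rate-jumps}. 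You are right that (b) deserves to be recorded: the paper's argument silently uses it, and it is a clean structural observation independent of $m$. The three claims are handled in a more logically ordered way in your write-up (proving (3) before (2)), but this is cosmetic; the ingredients, dependencies on earlier lemmas, and the overall reasoning are the same as the paper's.
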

\begin{proof}
Since the maximum distance to be traveled by every vehicle is upper bounded by $R$, the initial workload satisfies $w_0\leq n_0R$.  Since the minimum service rate for $m \in (0,1)$ is $L^m$ (see proof of Proposition~\ref{prop:throughput-bound-sublinear}), with no new arrivals, it takes at most $w_0/L^m=n_0 R/L^m$ amount of time for the system to become empty. This establishes the bound on $T_1$. 

Lemma~\ref{lem:vehicle-distance-monotonicity} implies that, under $\pi^b_{\triangle}$, the minimum inter-vehicle distance in HTQ2 is at least $\triangle$ after $T_1$. 
This implies that $\ymin(t) \geq \triangle$ for all $t \geq T_1$, and hence the minimum speed of every vehicle in HTQ2 is at least $\triangle^m$ after $T_1$. Since the maximum distance to be traveled by every vehicle is $R$, this implies that the time between release of a vehicle into HTQ2 and its departure is upper bounded by $R/\triangle^m$, which in turn is also an upper bound on the time required by all the vehicles released in one batch to depart from the system. 
\qed
\end{proof}
Let $N_1(t)$ and $N_2(t)$ denote the queue lengths in HTQ1 and HTQ2, respectively, at time $t$. Lemma~\ref{eq:successive-release-time-difference} implies that, for every $\triangle>0$, $N_2(t)$ is upper bounded for all $t \geq T_1$. The next result identifies conditions under which $N_1(t)$ is upper bounded. 




For $F>0$, let $\Phi_F:=\setdef{\varphi \in \Phi}{\sup_{x \in [0,\ell]} \varphi(x) \leq F}$. For subsequent analysis, we now derive an upper bound on the \emph{load factor}, i.e., the ratio of the arrival and departure rates, associated with a typical sub-queue of HTQ1 among  $\{1, 2, \ldots, \lceil \frac{\ell}{\triangle} \rceil\}$. It is easy to see that, for every $\varphi \in \Phi_F$, $F>0$, the arrival process into every sub-queue is Poisson with arrival rate upper bounded by $\lambda F \triangle$.  Lemma~\ref{eq:successive-release-time-difference} implies that the departure rate is at least $\triangle^m/2R$. Therefore, the load factor for every sub-queue is upper bounded as 
\begin{equation}
\label{eq:load-factor-upper-bound}
\rho \leq \frac{2 R \lambda F \triangle}{\triangle^m}=2 R \lambda F \triangle^{1-m}
\end{equation}
In particular, if 
\be\label{eq:triangle-star}
\triangle < \triangle^*(\lambda):=\left(2 R \lambda F \right)^{-\frac{1}{1-m}},
\ee 
then $\rho<1$. It should be noted that for $n_0<+\infty$, by Lemma \ref{eq:successive-release-time-difference},  $T_1<+\infty$. The service rate is zero during $[0,T_1]$; however, since $T_1$ is finite, this does not affect the computation of load factor. 


\begin{proposition}
\label{prop:pi1-policy}
For any $\lambda>0$, $\varphi \in \Phi_F$, $F>0$, $\psi \in \Psi$ with $\supp(\psi)=[0,R]$, $R>0$, $m\in(0,1)$, $x_0\in[0,L]^{n_0}$, $L>0$, $n_0\in \mathbb{N}$, for sufficiently small $\triangle$, $N_1(t)$ is bounded for all $t \geq 0$ under $\pi_{\triangle}^b$, almost surely.
\end{proposition}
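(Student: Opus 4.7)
The plan is to decompose HTQ1 into $K := \lceil \ell/\triangle \rceil$ parallel sub-queues (one per sub-interval of $[0,\ell]$ of length $\triangle$) and show that, for $\triangle$ sufficiently small, each sub-queue is stochastically dominated by a stable M/D/1 queue, so that the resulting pathwise bound propagates to $N_1(t) = \sum_{k=1}^{K} N_1^{(k)}(t)$.

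First I would identify the per-sub-queue arrival process. Since arrivals to HTQ1 form a spatio-temporal Poisson process with rate $\lambda$ and spatial density $\varphi \in \Phi_F$, the mapping (thinning) theorem gives that the restriction of arrivals to sub-queue $k$ is a Poisson process with rate at most $\lambda F \triangle$, and the $K$ restrictions are mutually independent.

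Second, I would quantify service per sub-queue using Lemma~\ref{eq:successive-release-time-difference}: $T_{i+1}-T_i \leq R/\triangle^m$ between consecutive batch releases and $T_1$ is a.s.\ finite. Since $\pi^b_{\triangle}$ alternates releases between odd- and even-indexed sub-queues, any fixed sub-queue sees a (potential) release event at least once every $2R/\triangle^m$ units of time, providing a deterministic upper bound on its inter-service times. Together with Step~1, this gives the utilization bound \eqref{eq:load-factor-upper-bound}.

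Third, I would run a pathwise coupling with a reference M/D/1 queue having arrival rate $\lambda F \triangle$ and deterministic service time $2R/\triangle^m$. For $\triangle < \triangle^*(\lambda)$ given by \eqref{eq:triangle-star} this queue is positive recurrent, so its length is a.s.\ bounded on $[0,\infty)$. Because the real release opportunities at sub-queue $k$ are at least as frequent as the reference server's firings, a standard majorization yields $N_1^{(k)}(t) \leq N_{k}^{\mathrm{ref}}(t) + N_1^{(k)}(T_1)$ for all $t \geq T_1$, and hence almost sure boundedness on $[0,\infty)$. Summing over the finite set $k \in \{1,\ldots,K\}$ completes the argument.

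The main obstacle is the coupling in the third step. The service epochs at sub-queue $k$ are not an autonomous renewal stream: the batch release times are determined by the full HTQ2 dynamics and are therefore correlated with the travel distances sampled from $\psi$ and with the states of the other sub-queues. The key maneuver is to discard all of this extra information and use only the deterministic worst-case gap $2R/\triangle^m$, replacing the real server by a virtual one that actually waits the full $2R/\triangle^m$ between release opportunities. This produces an i.i.d.\ service pattern independent of the arrivals, places the analysis squarely inside classical M/D/1 stability theory, and the finite transient $[0,T_1]$ (during which arrivals accrue but no service occurs) is absorbed as a finite a.s.\ random initial condition.
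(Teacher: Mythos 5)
Your proposal is correct and follows essentially the same route as the paper: decompose HTQ1 into $\lceil \ell/\triangle\rceil$ sub-queues, bound the per-sub-queue arrival rate by $\lambda F\triangle$ via Poisson thinning, bound the inter-service gap by $2R/\triangle^m$ using Lemma~\ref{eq:successive-release-time-difference} and the odd/even alternation of $\pi^b_\triangle$, and conclude stability from the load factor bound \eqref{eq:load-factor-upper-bound} when $\triangle < \triangle^*(\lambda)$. The only genuine difference is presentational: the paper argues by contradiction (if some $N_{1,j}$ grows unbounded it is eventually always busy, whereupon the law of large numbers applied to the renewal drift forces it back to zero), whereas you make the reference M/D/1 queue explicit and invoke a pathwise coupling/majorization. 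Your formulation has the minor virtue of naming the dominating process and explaining why the dependence between the actual release epochs and the $\psi$-driven HTQ2 dynamics is harmless (one discards the extra information and uses only the worst-case deterministic gap), a point the paper leaves implicit. Conversely, both arguments share the same looseness around what ``bounded for all $t\ge0$, almost surely'' means for a positive-recurrent queue (pathwise suprema of such a queue are not literally finite); your phrase ``a.s.\ bounded on $[0,\infty)$'' inherits that imprecision from the paper, so it is not an additional gap on your part.
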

\begin{proof}
By contradiction, assume that $N_1(t)$ grows unbounded. This implies that there exists at least one sub-queue, say $ j \in \{1, 2, \ldots, \lceil \frac{\ell}{\triangle} \rceil\}$, such that its queue length, say $N_{1,j}(t)$, grows unbounded. In particular, this implies that there exists $t_0 \geq T_1$ such that $N_{1,j}(t) \geq 2$ for all $t \geq t_0$. Therefore, for all $t \geq t_0$, the ratio of arrival rate to departure rate for the $j$-th sub-queue is given by \eqref{eq:load-factor-upper-bound}, which is a decreasing function of $\triangle$, and hence becomes strictly less than one for sufficiently small $\triangle$. A simple application of the law of large numbers then implies that, almost surely, $N_{1,j}(t)=0$ for some finite time, leading to a contradiction.
\qed 
\end{proof}

The following result gives an estimate of the mean waiting time in a typical sub-queue in HTQ1 under the $\pi_{\triangle}^b$ policy.

\begin{proposition}
\label{eq:waiting-time}
For $\varphi \in \Phi_F$, $F>0$, $\psi \in \Psi$, $m \in (0,1)$, there exists a sufficiently small $\triangle$ such that the average waiting time in HTQ1 under $\pi_{\triangle}^b$ is upper bounded as: 
\begin{equation}
\label{eq:W-upper-bound}
W \leq R (2 R \lambda F )^{\frac{m}{1-m}} \left(\frac{2}{m^{\frac{m}{1-m}}} + \frac{m}{m^{\frac{m}{1-m}}-m^{\frac{1}{1-m}}}  \right).
\end{equation}
\end{proposition}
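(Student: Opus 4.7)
The plan is to bound the mean waiting time in HTQ1 by treating each of the $\lceil \ell/\triangle \rceil$ sub-queues induced by $\pi^b_\triangle$ as a single-server queue with Poisson arrivals and worst-case deterministic service, dominating it by an M/D/1, applying the Pollaczek--Khinchine formula, and then selecting $\triangle$ to balance the resulting terms.

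First I would specify the sub-queue dynamics: because $\varphi \in \Phi_F$, the restriction of the spatio-temporal arrival process to any sub-interval of length $\triangle$ is Poisson with rate at most $\lambda_s := \lambda F \triangle$; and by Lemma~\ref{eq:successive-release-time-difference}, successive batch-release epochs are spaced by at most $R/\triangle^m$, so any fixed sub-queue sees a release opportunity at least once every $S := 2R/\triangle^m$ units of time (since $\pi^b_\triangle$ alternates between odd- and even-indexed sub-queues). I would then dominate each sub-queue by an M/D/1 queue with arrivals of rate $\lambda_s$ and deterministic service time $S$: a coupling replaces the true state-dependent inter-release gaps by phantom service completions spaced exactly $S$ apart, and since each real release occurs no later than the corresponding phantom completion, a sample-path induction on the release index shows that the real sub-queue is dominated by the phantom one. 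The Pollaczek--Khinchine formula for M/D/1 then yields the sojourn-time bound
\[
W \leq \frac{\rho S}{2(1-\rho)} + S,
\qquad \rho := \lambda_s S \leq 2R\lambda F\, \triangle^{1-m}.
\]

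Next I would plug in the specific choice $\triangle = (m/(2R\lambda F))^{1/(1-m)}$, which is strictly less than $\triangle^*(\lambda)$ in \eqref{eq:triangle-star} (hence ``sufficiently small'') and makes $\rho = m$. Substituting gives $S = 2R(2R\lambda F)^{m/(1-m)}/m^{m/(1-m)}$, so $\rho S/(2(1-\rho)) = mR(2R\lambda F)^{m/(1-m)}/\bigl((1-m)\, m^{m/(1-m)}\bigr)$, and the elementary identity $(1-m)\, m^{m/(1-m)} = m^{m/(1-m)} - m^{1/(1-m)}$ then collapses the two contributions of the sojourn-time bound into exactly the right-hand side of \eqref{eq:W-upper-bound}.

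The main obstacle will be justifying the stochastic dominance rigorously: the true inter-release gaps are neither i.i.d.\ nor independent of the arrival process, since they are generated by the nonlinear dynamics of HTQ2 together with the arrivals, travel distances, and release orderings of vehicles across all sub-queues. The coupling must introduce an exogenous clock that ticks every $S$ units of time and maintain the inequality ``phantom completion at tick $k$ is no earlier than the $k$th real release'' by induction, using only the pointwise bound from Lemma~\ref{eq:successive-release-time-difference} together with FIFO servicing within each sub-queue.
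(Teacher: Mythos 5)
Your proposal is correct and matches the paper's own proof in substance: both bound each sub-queue by an M/D/1 queue with arrival rate at most $\lambda F\triangle$ and deterministic service time $2R/\triangle^m$ (the factor $2$ coming from the odd/even alternation of $\pi^b_\triangle$), apply the standard M/D/1 sojourn-time formula, and specialize to $\triangle = \left(m/(2R\lambda F)\right)^{1/(1-m)}$ so that $\rho\le m$; the algebraic identity $(1-m)\,m^{m/(1-m)} = m^{m/(1-m)}-m^{1/(1-m)}$ you invoke is exactly how the paper's expression arises. The only cosmetic difference is that the paper obtains the same $\triangle$ by minimizing the looser intermediate bound $R/(\triangle^m-2R\lambda F\triangle)$ and then back-substitutes into the sharper $\frac{R}{\triangle^m}\frac{\rho}{1-\rho}$ term, whereas you posit the optimal $\triangle$ directly; the concern you raise about rigorously justifying the M/D/1 domination is a fair one, but the paper treats it equally informally (``It is easy to see that\dots''), so this is a shared, not a new, gap.
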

\begin{proof}
It is easy to see that the desired waiting time corresponds to the system time of an M/D/1 queue with load factor given by \eqref{eq:load-factor-upper-bound} along with the arrival and departure rates leading to \eqref{eq:load-factor-upper-bound}. Note that, by Lemma \ref{eq:successive-release-time-difference}, for finite $n_0$, the value of $T_1$ is finite and does not affect the average waiting time. Therefore, using standard expressions for M/D/1 queue~\cite{Kleinrock:75}, we get that the waiting time in HTQ1 is upper bounded as follows for $\rho <1$:
\begin{align}
W \leq \frac{2R}{\triangle^m} + \frac{R}{\triangle^m} \frac{\rho}{1-\rho}  & \leq \frac{2R}{\triangle^m} + \frac{R}{\triangle^m} \frac{1}{1-\rho} \nonumber \\ 
\label{eq:waiting-time-upper-bound}
& \leq \frac{2R}{\triangle^m} + \frac{R}{\triangle^m -2 R \lambda F \triangle} 
\end{align} 
It is easy to check that the minimum of the second term in \eqref{eq:waiting-time-upper-bound} over $\big(0,\triangle^*(\lambda)\big)$ occurs at  $\triangle = \left(\frac{m}{2 R \lambda F} \right)^{\frac{1}{1-m}}$. Substitution in the right hand side of the first inequality in \eqref{eq:waiting-time-upper-bound} gives the result.
\qed
\end{proof}

\begin{remark}
\label{rem:W-limit}
\eqref{eq:W-upper-bound} implies that, for every $R>0$, $F>0$, $\lambda>0$, we have $W \to 2R$ as $m \to 0^+$. 
\end{remark}

We extend the notation introduced in \eqref{eq:throughput-def} to $\lambdamax(L,m,\varphi,\psi,x_0,\delta,\eta)$ to also show the dependence on maximum allowable perturbation $\eta$. This is not to be confused with the notation for $\lambdamax$ used in Theorems~\ref{thm:superlinear-bound-empty} and \ref{thm:superlinear-bound-non-empty}, where we used the notion of throughput over finite time horizons. We choose to use the same notations to maintain brevity. 

In order to state the next result, for given $R>0$, $F>0$, $m \in (0,1)$ and $\eta \geq 0$, let $\tilde{W}(m,F,R,\eta)$ 
be the value of $\lambda$ for which the right hand side of \eqref{eq:W-upper-bound} is equal to $\eta$, if such a $\lambda$ exists and is at least $L^m/\bar{\psi}$, and let it be equal to $L^m/\bar{\psi}$, otherwise. The lower bound of $L^m/\bar{\psi}$ in the definition of $\tilde{W}$ is inspired by Proposition~\ref{prop:throughput-bound-sublinear}. The next result formally states $\tilde{W}$ as a lower bound on $\lambdamax$.

\begin{theorem}\label{thm:main-sub-linear}
For any $\varphi \in \Phi_F$, $F>0$, $\psi \in \Psi$ with $\text{supp}(\psi)=[0,R]$, $R>0$, $m \in (0,1)$, $x_0 \in [0,L]^{n_0}$, $n_0 \in \NN$, $L>0$, and maximum permissible perturbation $\eta \geq 0$, 
$$
\lambdamax(L,m,\varphi,\psi,x_0,\delta=0,\eta) \geq \tilde{W}(m,F,R,\eta)
$$
In particular, if $\eta > 2R$, then $\lambdamax(L,m,\varphi,\psi,x_0,\delta=0,\eta) \to + \infty$ as $m \to 0^+$.
\end{theorem}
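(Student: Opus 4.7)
The plan is to show that for every $\lambda<\tilde W(m,F,R,\eta)$ the batch release policy $\pi_\triangle^b$ with a carefully chosen sub-interval length $\triangle$ keeps the HTQ queue length bounded almost surely while inducing an expected perturbation of arrival times no greater than $\eta$. The particular asymptotic statement will then reduce to a direct limit computation on the right-hand side of \eqref{eq:W-upper-bound}.

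First I would dispose of $\lambda\leq L^m/\bar\psi$ via Proposition~\ref{prop:throughput-bound-sublinear} (no control is needed, so the induced perturbation is zero and the claim is immediate). For $\lambda\in(L^m/\bar\psi,\,\tilde W(m,F,R,\eta))$, I would pick $\triangle=\bigl(m/(2R\lambda F)\bigr)^{1/(1-m)}$, the minimizer over $(0,\triangle^*(\lambda))$ identified inside the proof of Proposition~\ref{eq:waiting-time}. Plugging this choice into \eqref{eq:load-factor-upper-bound} gives load factor exactly $\rho=m<1$, so Proposition~\ref{prop:pi1-policy} yields that $N_1(t)$ is bounded almost surely. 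Lemma~\ref{eq:successive-release-time-difference} further forces $N_2(t)\leq\lceil\ell/(2\triangle)\rceil$ pathwise, since no vehicle enters HTQ2 between consecutive release epochs and every batch contains at most that many vehicles. Adding, $N(t)=N_1(t)+N_2(t)$ remains bounded almost surely.

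Next, Proposition~\ref{eq:waiting-time} ensures that the mean waiting time in HTQ1 -- which, by the very construction of $\pi_\triangle^b$, equals the perturbation magnitude $E(t_i'-t_i)$ -- is at most the right-hand side of \eqref{eq:W-upper-bound}. That right-hand side is strictly increasing in $\lambda$ (because $m/(1-m)>0$) and equals $\eta$ at $\lambda=\tilde W(m,F,R,\eta)$ by definition, so for $\lambda<\tilde W$ the induced perturbation is strictly less than $\eta$. Hence every such $\lambda$ is admissible for the $\eta$-perturbed throughput problem, and taking the supremum in $\lambda$ yields $\lambdamax(L,m,\varphi,\psi,x_0,0,\eta)\geq\tilde W(m,F,R,\eta)$.

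For the asymptotic claim, fix $M>0$ arbitrarily and consider the right-hand side of \eqref{eq:W-upper-bound} at $\lambda=M$ as $m\to 0^+$. The three sub-limits to verify are $(2RMF)^{m/(1-m)}\to 1$, $m^{m/(1-m)}\to 1$ (since $m\log m\to 0$), and $m/\bigl(m^{m/(1-m)}-m^{1/(1-m)}\bigr)\to 0$ because the numerator vanishes while the denominator tends to $1-0=1$. So the bound converges to $R\cdot(2+0)=2R<\eta$, and by monotonicity in $\lambda$ one obtains $\tilde W(m,F,R,\eta)\geq M$ for all sufficiently small $m$, whence $\lambdamax\to+\infty$. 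The main obstacle is essentially bookkeeping: confirming that the chosen $\triangle$ lies in the admissible window $(0,\triangle^*(\lambda))$ of Proposition~\ref{eq:waiting-time} (direct substitution gives $\triangle^{1-m}=m/(2R\lambda F)$, so $\rho=m<1$, which is exactly the required inequality), handling strict monotonicity of the waiting-time upper bound in $\lambda$, and dealing with the degenerate branch of the definition of $\tilde W$ in which no $\lambda\geq L^m/\bar\psi$ solves the equation $W_{\mathrm{opt}}(\lambda)=\eta$, whereupon the claim reduces back to Proposition~\ref{prop:throughput-bound-sublinear}.
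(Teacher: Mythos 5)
Your proposal follows essentially the same route as the paper's proof: choose $\triangle = \bigl(m/(2R\lambda F)\bigr)^{1/(1-m)}$, invoke Lemma~\ref{eq:successive-release-time-difference} and Proposition~\ref{prop:pi1-policy} for almost-sure boundedness of $N_1$ and $N_2$, use Proposition~\ref{eq:waiting-time} together with the monotonicity of the right side of~\eqref{eq:W-upper-bound} in $\lambda$ to bound the perturbation by $\eta$ for all $\lambda\leq\tilde W$, and finish with the $W\to 2R$ limit as $m\to 0^+$ (the paper cites Remark~\ref{rem:W-limit} where you redo the calculation). The extra bookkeeping you supply — the explicit $N_2(t)\leq\lceil\ell/(2\triangle)\rceil$ bound, separating off the regime $\lambda\leq L^m/\bar\psi$, and spelling out the three sub-limits — is correct and simply fills in details the paper leaves implicit.
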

\begin{proof}
Consider any $\lambda \leq\tilde{W}(m,F,R,\eta)$, and $\triangle \leq \left(\frac{m}{2 R \lambda F} \right)^{\frac{1}{1-m}}$. Under policy $\pi_\triangle^b$,  Lemma \ref{eq:successive-release-time-difference} and Proposition \ref{prop:pi1-policy} imply that, for finite $n_0$, $N_2(t)$ and $N_1(t)$ remain bounded for all times, with probability one. Also, for $\lambda = \tilde{W}(m,F,R,\eta)$, by Proposition \ref{eq:waiting-time} and the definition of $\tilde{W}(m,F,R,\eta)$, the introduced perturbation remains upper bounded by $\eta$. Since the right hand side of \eqref{eq:W-upper-bound} is monotonically increasing in $\lambda$, perturbations remain bounded by $\eta$ for all $\lambda \leq \tilde{W}(m,F,R,\eta)$. 
In particular, by Remark \ref{rem:W-limit}, we have $W\to 2R$ as $m\to 0^+$. In other words, as $m\to 0^+$, the magnitude of the introduced perturbation becomes independent of $\lambda$. 
Therefore, when $\eta >2R$, and $m\to 0^+$  throughput can grow unbounded while perturbation and queue length remains bounded.
\qed
\end{proof}




\begin{remark}
We emphasize that the only feature required in a batch release control policy is that, at the moment of release, the front and rear distances for the vehicles being released should be greater than $\triangle$. The requirement of the policy in Definition~\ref{def:batch-release-control-policy} for the road to be empty at the moment of release makes the control policy conservative, and hence affects the maximum permissible perturbation. In fact, for special spatial distributions, e.g., when $\varphi$ is a Dirac delta function and the support of $\psi$ is $[0,L-\triangle])$, one can relax the conservatism to guarantee unbounded throughput for arbitrarily small permissible perturbation. 
\end{remark}

\subsubsection{The Super-linear Case}

In this section, we study the throughput for the super-linear case under perturbed arrival process with a maximum permissible perturbation of $\eta$. For this purpose, we consider the batch release control policy $\pi_{\triangle}^b$, defined in Definition \ref{def:batch-release-control-policy}, for our analysis. Time intervals  between release of successive batches, under $\pi_{\triangle}^b$, are characterized the same as Lemma \ref{eq:successive-release-time-difference}. However, in the super linear case, by Lemma \ref{lem:service-rate-bounds}, the initial minimum service rate is $L^mn_0^{1-m}$. Therefore, the time of first release is bounded as $T_1<n_0^mR/L^m$. Moreover, similar to the proof of Lemma \ref{eq:successive-release-time-difference}, it can be shown that $\ymin(t)\geq \triangle$ for all $t\geq T_1$. 

During $[0,T_1]$, the service rate of all sub-queues remain zero; however, when $n_0<+\infty$, $T_1$ is finite and for the computation of load factor this time interval can be neglected. Therefore,  the load factor for each sub-queue will be the same as the sub-linear case \eqref{eq:load-factor-upper-bound}. In this case, however, in order to have $\rho<1$, we get the counterpart of \eqref{eq:triangle-star} as:
\begin{equation}\label{eq:triangle-star-superlinear}
\triangle > \triangle^*(\lambda).
\end{equation}
It should be noted that since the batch release control policy iteratively releases from odd and even sub-queues, we need at least two sub-queues to be able to implement this policy. As a result, $\triangle$ cannot be arbitrary large and $\triangle<\ell/2$. This constraint gives the following  bound on the admissible throughput under this policy
\begin{equation}\label{eq:lambda-star-superlinear}
\lambda<\lambda^*:=(\ell/2)^{m-1}/2RF
\end{equation}
 The following result shows that for the above range of throughput, the queue length in HTQ1, $N_1(t)$, remains bounded at all times. 
  \begin{proposition}
\label{prop:pi1-policy-super-linear}
For any $\lambda<\lambda^*$, $\triangle\in\big(\triangle^*(\lambda),\ell/2\big]$, $\varphi \in \Phi_F$, $F>0$, $\psi \in \Psi$ with $\supp(\psi)=[0,R]$, $R>0$, $m>1$, $x_0\in[0,L]^{n_0}$, $L>0$, $n_0\in \mathbb{N}$, $N_1(t)$ is bounded for all $t \geq 0$ under $\pi_{\triangle}^b$, almost surely.
\end{proposition}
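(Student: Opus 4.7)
The plan is to mirror the contradiction argument from the proof of Proposition~\ref{prop:pi1-policy}, with the inequality on $\triangle$ reversed to reflect that the map $\triangle \mapsto \triangle^{1-m}$ is strictly decreasing for $m>1$. First I would verify that the conclusions of Lemma~\ref{eq:successive-release-time-difference} carry over to the super-linear setting: the monotonicity of $\ymin$ in Lemma~\ref{lem:vehicle-distance-monotonicity} is stated for all $m>0$, so the inequalities $T_{i+1}-T_i \leq R/\triangle^m$ for $i\geq 1$ and $\ymin(t) \geq \triangle$ for $t \geq T_1$ still hold. The only change is in the bound on $T_1$: using the super-linear lower bound $s(y) \geq L^m N^{1-m}$ from Lemma~\ref{lem:service-rate-bounds} one obtains $T_1 \leq n_0^m R/L^m$, which is still finite and so can be neglected in the long-run load factor computation.

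Next, assume for contradiction that $N_1(t)$ is unbounded on some sample path. Because the number of sub-queues $\lceil \ell/\triangle \rceil$ is finite (since $\triangle > \triangle^*(\lambda) > 0$), the pigeonhole principle yields some sub-queue $j$ with $N_{1,j}(t)$ unbounded; hence $N_{1,j}(t) \geq 2$ for all $t$ past some finite $t_0 \geq T_1$. For such a sub-queue, the restricted arrival process is Poisson with rate at most $\lambda F \triangle$ (because $\varphi \in \Phi_F$ and the sub-queue occupies an interval of length $\triangle$). On the departure side, HTQ2 becomes empty at least once per $R/\triangle^m$ time units after $T_1$, and because the batch policy alternates between odd and even sub-queues, sub-queue $j$ sees release events at rate at least $\triangle^m/(2R)$. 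The load factor is therefore bounded as in \eqref{eq:load-factor-upper-bound} by $\rho \leq 2R\lambda F \triangle^{1-m}$, which for $m>1$ is strictly decreasing in $\triangle$; the hypothesis $\triangle > \triangle^*(\lambda)$ thus forces $\rho < 1$.

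To close the argument, I would dominate sub-queue $j$ by an M/D/1 queue with Poisson arrivals of rate $\lambda F \triangle$ and deterministic service times of length $R/\triangle^m$, whose load factor is $\rho < 1$. A renewal-reward / strong-law-of-large-numbers argument analogous to the one used in the proof of Proposition~\ref{prop:unstable}, comparing cumulative arrivals to cumulative potential services in the dominating queue, then shows that sub-queue $j$ empties in finite time almost surely, contradicting $N_{1,j}(t) \geq 2$ for all $t \geq t_0$. The main obstacle I expect is justifying the stochastic domination cleanly: the release epochs for sub-queue $j$ are not the renewal epochs of an independent M/D/1 server, because they are coupled through HTQ2's dynamics and the alternating odd/even schedule. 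Care is required to argue that the actual service times (times between successive releases serving sub-queue $j$) are pathwise upper bounded by $2R/\triangle^m$, so that the dominating M/D/1 workload pathwise majorizes that of sub-queue $j$, and that this pathwise domination is strong enough to transfer the M/D/1 stability conclusion back to sub-queue $j$.
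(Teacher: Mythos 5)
Your proof is correct and takes essentially the same approach as the paper's: a contradiction argument via the pigeonhole principle on the finitely many sub-queues, showing the per-sub-queue load factor $\rho\leq 2R\lambda F\triangle^{1-m}$ is strictly below one because $\triangle^{1-m}$ is decreasing in $\triangle$ for $m>1$ and $\triangle>\triangle^*(\lambda)$, reusing Lemma~\ref{eq:successive-release-time-difference} with the modified bound $T_1\leq n_0^m R/L^m$, and then appealing to the law of large numbers to obtain a contradiction. The domination subtlety you flag at the end is genuine, but it is not addressed by the paper either, whose proof of this proposition is simply a one-sentence reference back to Proposition~\ref{prop:pi1-policy}.
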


\begin{proof}
The proof is similar to proof of Proposition \ref{prop:pi1-policy}. In particular, by \eqref{eq:triangle-star-superlinear} and \eqref{eq:lambda-star-superlinear}, one can show that load factor \eqref{eq:load-factor-upper-bound} remains strictly smaller than one. This implies that no sub-queue in HTQ1 can grow unbounded, and $N_1(t)$ remains bounded for all times, with probability one. 
\qed
\end{proof}


\begin{proposition}
\label{eq:waiting-time-super-linear}
For any $\lambda<\lambda^*$, $\varphi \in \Phi_F$, $F>0$, $\psi \in \Psi$, $m> 1$, the average waiting time in HTQ1 under $\pi_{\triangle}^b$ for $\triangle = \ell/2$ is upper bounded as: 
\begin{equation}
\label{eq:waiting-time-super-linear-eq}
W \leq \frac{2R}{(\ell/2)^m}+\frac{R}{(\ell/2)^m}\frac{2R\lambda F(\ell/2)^{1-m}}{1-2R\lambda F(\ell/2)^{1-m}}
\end{equation}
\end{proposition}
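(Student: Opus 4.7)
The plan is to mirror the argument used for the sub-linear case in Proposition~\ref{eq:waiting-time}, specialized to the fixed choice $\triangle = \ell/2$. First, I would observe that, under $\pi_{\triangle}^b$, the dynamics of HTQ1 decompose across the sub-intervals $\{1,\ldots,\lceil \ell/\triangle\rceil\}$, and each sub-queue can be analyzed as an M/D/1 queue: the arrival process into the $j$-th sub-queue is Poisson with rate at most $\lambda F \triangle$ (since $\varphi \in \Phi_F$), and Lemma~\ref{eq:successive-release-time-difference}, together with the remark in the paragraph preceding this proposition extending it to the super-linear regime, implies that the deterministic inter-service time is at most $R/\triangle^m$ (after the finite transient $[0,T_1]$, which does not affect long-run averages since $T_1 < n_0^m R/L^m < +\infty$).

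Next, I would invoke the standard M/D/1 system-time formula used to derive \eqref{eq:waiting-time-upper-bound}, obtaining
\begin{equation*}
W \;\leq\; \frac{2R}{\triangle^m} + \frac{R}{\triangle^m}\,\frac{\rho}{1-\rho},
\end{equation*}
where $\rho$ is the load factor of a sub-queue, bounded by \eqref{eq:load-factor-upper-bound} as $\rho \leq 2R\lambda F\,\triangle^{1-m}$. This bound is valid precisely when $\rho<1$, which for $m>1$ requires $\triangle > \triangle^*(\lambda)$, as recorded in \eqref{eq:triangle-star-superlinear}.

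Finally, I would substitute $\triangle = \ell/2$ into the bound. The hypothesis $\lambda < \lambda^* = (\ell/2)^{m-1}/(2RF)$ from \eqref{eq:lambda-star-superlinear} is exactly the statement that $2R\lambda F (\ell/2)^{1-m} < 1$, so the denominator $1-\rho$ stays strictly positive and the expression is well-defined. Plugging $\rho \leq 2R\lambda F (\ell/2)^{1-m}$ and $\triangle=\ell/2$ into the display above yields the claimed inequality \eqref{eq:waiting-time-super-linear-eq} directly.

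There is no real obstacle here; the only point requiring care is the verification that Lemma~\ref{eq:successive-release-time-difference} genuinely transfers to $m>1$. This is immediate because Lemma~\ref{lem:vehicle-distance-monotonicity} (non-decrease of $\ymin$ between jumps) holds for all $m>0$, so after the first release time $T_1$ every vehicle in HTQ2 travels at speed at least $\triangle^m$, and hence the travel time of any released batch is bounded by $R/\triangle^m$ in both the sub- and super-linear regimes. Unlike in the sub-linear proposition, no optimization over $\triangle$ is needed here, since in the super-linear case larger $\triangle$ decreases $\rho$ and the maximum feasible value $\triangle=\ell/2$ is the natural choice.
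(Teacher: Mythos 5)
Your proof is correct and follows essentially the same route as the paper: bound each HTQ1 sub-queue as an M/D/1 queue, apply the standard system-time formula with $\rho \leq 2R\lambda F\triangle^{1-m}$, and set $\triangle=\ell/2$, with the condition $\lambda<\lambda^*$ ensuring $\rho<1$. The paper phrases the last step as observing the bound is decreasing in $\triangle$ and hence minimized at $\triangle=\ell/2$, which is the same observation you make about larger $\triangle$ being better in the super-linear regime.
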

\begin{proof}
The proof is very similar to the proof of Proposition \ref{eq:waiting-time}. Thus, we get the following bounds:
\begin{align*}
W \leq \frac{2R}{\triangle^m} + \frac{R}{\triangle^m} \frac{\rho}{1-\rho}  \leq \frac{2R}{\triangle^m}+\frac{R}{\triangle^m}\frac{2R\lambda F\triangle^{1-m}}{1-2R\lambda F\triangle^{1-m}}
\end{align*} 
The right hand side of the above inequality is a decreasing function of $\triangle$; therefore, $\triangle = \ell/2$ minimizes it, and gives \eqref{eq:waiting-time-super-linear-eq}. 
\qed
\end{proof}

Let $\hat{W}(m,F,R,\eta)$ 
be the value of $\lambda$ for which the right hand side of \eqref{eq:waiting-time-super-linear-eq} is equal to $\eta$, if such a $\lambda\leq \lambda^*$ exists, and let it be equal to $\lambda^*$ otherwise. Note that since the right hand side of \eqref{eq:waiting-time-super-linear-eq} is monotonically increasing in $\lambda$, for all $\lambda\leq \hat{W}(m,F,R,\eta)$ the introduced perturbation remains upper bounded by $\eta$. 
\begin{theorem}\label{thm:main-batch-super-linear}
For any $\varphi \in \Phi_F$, $F>0$, $\psi \in \Psi$ with $\text{supp}(\psi)=[0,R]$, $R>0$, $m >1$, $x_0 \in [0,L]^{n_0}$, $n_0 \in \NN$, $L>0$, and maximum permissible perturbation $\eta \geq 0$, 
$$
\lambdamax(L,m,\varphi,\psi,x_0,\delta=0,\eta) \geq \hat{W}(m,F,R,\eta)
.$$
\end{theorem}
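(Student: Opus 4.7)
The plan is to mirror the structure of the proof of Theorem~\ref{thm:main-sub-linear}, using the super-linear counterparts of the ingredients already established in the section. The core idea is to deploy the batch release control policy $\pi_\triangle^b$ with an appropriate choice of $\triangle$, verify that both the HTQ1 and HTQ2 queue lengths stay bounded almost surely, and check that the induced perturbation in arrival times does not exceed $\eta$.

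First, fix any $\lambda \leq \hat{W}(m,F,R,\eta)$. By definition of $\hat{W}$, we have $\lambda \leq \lambda^*$, so \eqref{eq:triangle-star-superlinear} and \eqref{eq:lambda-star-superlinear} guarantee the existence of some $\triangle \in \big(\triangle^*(\lambda), \ell/2 \big]$ for which the load factor in \eqref{eq:load-factor-upper-bound} satisfies $\rho < 1$. Taking in particular $\triangle = \ell/2$ (the choice that minimizes the waiting time bound per Proposition~\ref{eq:waiting-time-super-linear}), I would invoke Proposition~\ref{prop:pi1-policy-super-linear} to conclude that $N_1(t)$ remains bounded for all $t \geq 0$ almost surely. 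For the downstream queue, the super-linear analogue of Lemma~\ref{eq:successive-release-time-difference} (discussed immediately before Proposition~\ref{prop:pi1-policy-super-linear}) gives $T_1 \leq n_0^m R / L^m < +\infty$ and $\ymin(t) \geq \triangle$ for all $t \geq T_1$, which in turn implies that every vehicle released into HTQ2 departs within time $R/\triangle^m$, bounding $N_2(t)$ deterministically after $T_1$. Combining both facts, the total queue length $N(t) = N_1(t) + N_2(t)$ is bounded for all $t \geq 0$ almost surely.

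Next I would verify the perturbation constraint. By Proposition~\ref{eq:waiting-time-super-linear}, the expected waiting time in HTQ1 under $\pi_\triangle^b$ with $\triangle=\ell/2$ is bounded above by the right hand side of \eqref{eq:waiting-time-super-linear-eq}, which is monotonically increasing in $\lambda$. By the definition of $\hat{W}(m,F,R,\eta)$, this bound is at most $\eta$ whenever $\lambda \leq \hat{W}(m,F,R,\eta)$; when no $\lambda$ attains equality with $\eta$ below $\lambda^*$, the definition caps $\hat{W}$ at $\lambda^*$ and the bound is still consistent with the feasibility range of $\pi_\triangle^b$. Hence the average extra waiting time (i.e., the magnitude of the perturbation to the arrival process) does not exceed $\eta$, as required.

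Finally, since $\lambda \leq \hat{W}(m,F,R,\eta)$ was arbitrary and both the bounded-queue property and the permissible perturbation condition hold, we conclude that such $\lambda$ lies within the supremum defining $\lambdamax(L,m,\varphi,\psi,x_0,\delta=0,\eta)$, yielding the desired lower bound. The main obstacle, which is mostly bookkeeping here rather than a genuinely new technical hurdle, is keeping the two separate constraints on $\triangle$ compatible: the lower bound $\triangle > \triangle^*(\lambda)$ that ensures $\rho<1$, and the upper bound $\triangle \leq \ell/2$ that makes the batch policy implementable. Restricting to $\lambda < \lambda^*$ and setting $\triangle = \ell/2$ resolves this tension, so the argument reduces to careful assembly of the preceding results rather than any new estimation.
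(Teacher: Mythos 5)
Your proof is correct and follows the same route as the paper: invoke Proposition~\ref{prop:pi1-policy-super-linear} for boundedness of $N_1(t)$, the super-linear version of Lemma~\ref{eq:successive-release-time-difference} for boundedness of $N_2(t)$, and Proposition~\ref{eq:waiting-time-super-linear} together with the definition of $\hat{W}$ to keep the perturbation below $\eta$. One small slip: with $\triangle=\ell/2$ the load factor bound becomes exactly $1$ at $\lambda=\lambda^*$, so the argument needs $\lambda<\hat{W}(m,F,R,\eta)$ (as in the paper's proof) rather than $\lambda\leq\hat{W}(m,F,R,\eta)$; since $\lambdamax$ is a supremum this does not affect the conclusion.
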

\begin{proof}
For any $\lambda <\hat{W}(m,F,R,\eta)$, under $\pi_\triangle^b$, Lemma \ref{eq:successive-release-time-difference} and Proposition \ref{prop:pi1-policy-super-linear} imply that, for finite $n_0$, $N_2(t)$ and $N_1(t)$ remain bounded for all times, with probability one. Also, by Proposition \ref{eq:waiting-time-super-linear} and the definition of $\hat{W}(m,F,R,\eta)$, the introduced perturbation remains upper bounded by $\eta$. 
\qed
\end{proof}

\section{Simulations}
\label{sec:simulations}
In this section, we present simulation results on throughput analysis, and compare with our theoretical results from previous sections.

\begin{figure}
\centering
\subfigure[]{\includegraphics[width=5cm]{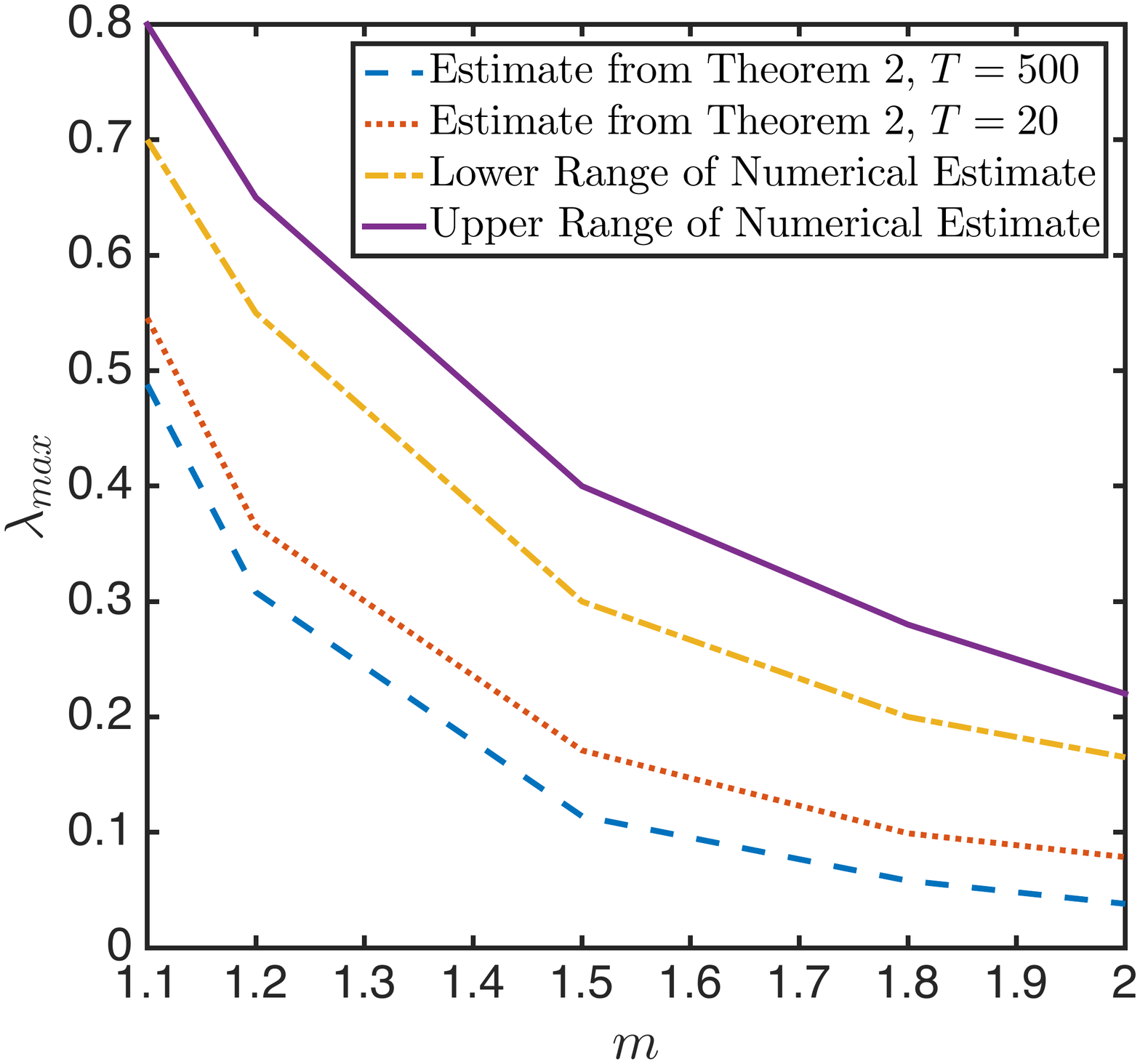}}
\centering
\hspace{0.3in}
\subfigure[]{\includegraphics[width=5cm]{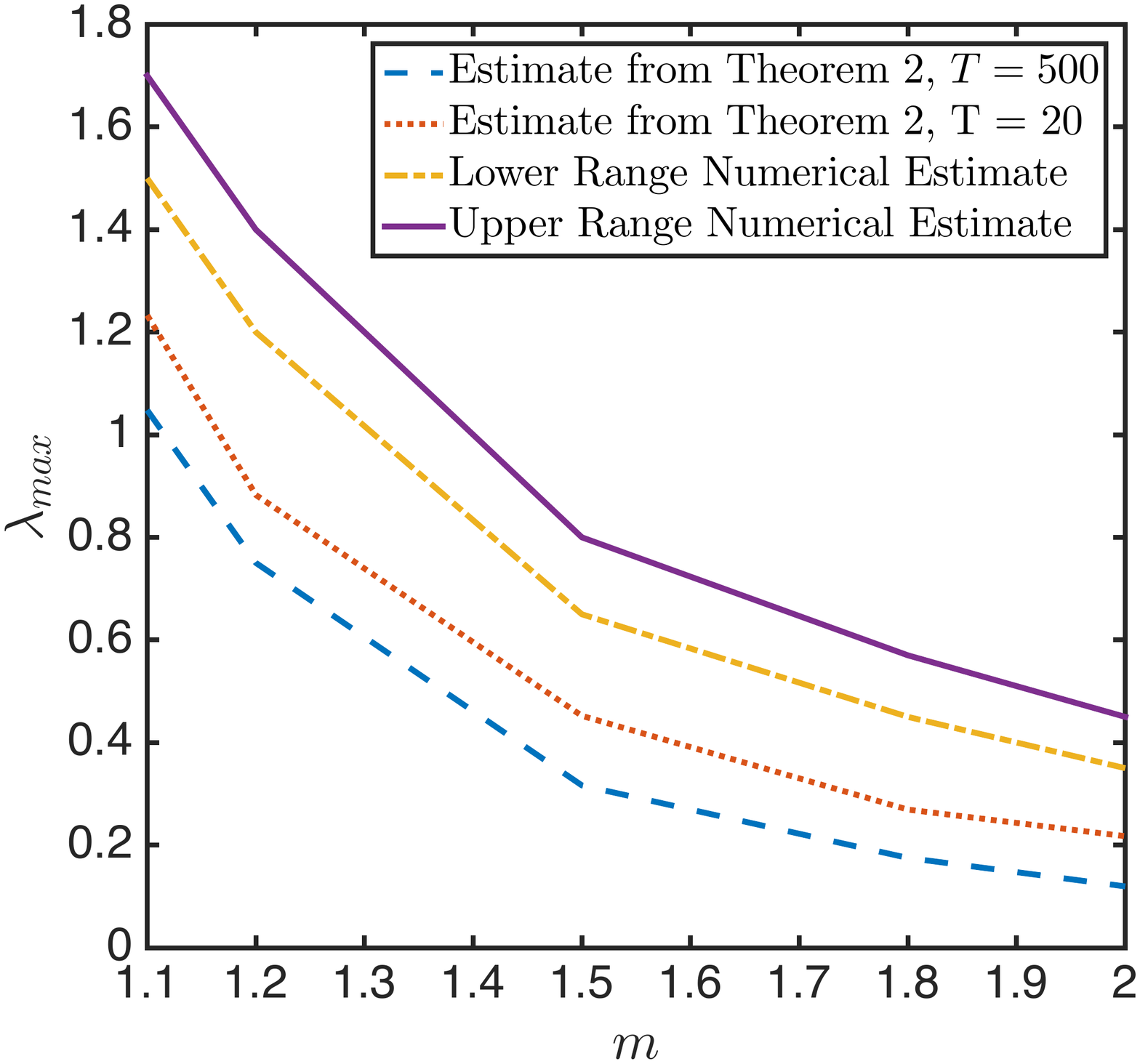}}
\caption{Comparison between theoretical estimates of throughput from Theorem~\ref{thm:superlinear-bound-empty}, and range of numerical estimates from simulations, for zero initial condition. The parameters used for this case are: $L = 1$, $\delta = 0.1$, and (a) $\varphi = \delta_0$, $\psi = \delta_L$, (b)  $\varphi = U_{[0,L]}$, $\psi=U_{[0,L]}$.
 \label{fig:lambda-m-empty}}
\end{figure}

 \begin{figure}[htb!]
\begin{center}
\includegraphics[width=5cm]{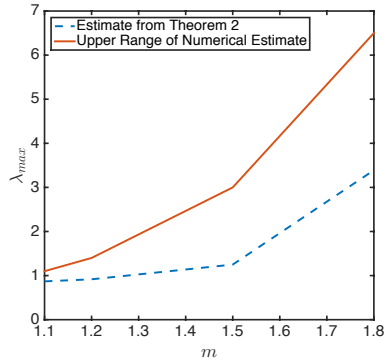} 
\caption{Comparison between theoretical estimates of throughput from Theorem  \ref{thm:superlinear-bound-empty}, and range of numerical estimates from simulations, for zero initial condition. The parameters used for this case are: $L = 100$, $\delta = 0.1$, $T=10$, and $\varphi = \delta_0$, $\psi = \delta_L$.}
\label{fig:superlinear-large-L}
\end{center}
\end{figure} 

 \begin{figure}[htb!]
\begin{center}
\includegraphics[width=5cm]{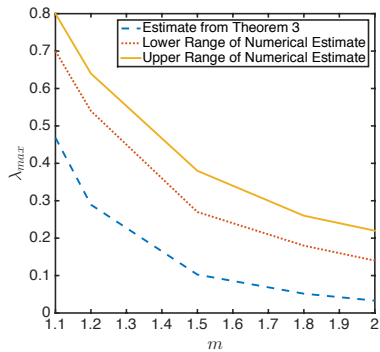} 
\caption{Comparison between theoretical estimates of throughput from Theorem~\ref{thm:superlinear-bound-non-empty}, and range of numerical estimates from simulations.  The parameters used for this case are: $L = 1$, $\delta = 0.1$, $\varphi = \delta_0$,  $\psi = \delta_L$, $w_0=1$ and $n_0=4$, $x_1(0) =0.6, x_2(0) =0.7, x_3(0) =0.8, x_4(0) =0.9$.}
\label{fig:phase-transition-non-empty}
\end{center}
\end{figure}


Figures~\ref{fig:lambda-m-empty}, \ref{fig:superlinear-large-L} and \ref{fig:phase-transition-non-empty} show comparison between the lower bound on throughput over finite time horizons, as given by Theorems~\ref{thm:superlinear-bound-empty} and \ref{thm:superlinear-bound-non-empty}, and the corresponding numerical estimates from simulations. 
 Figures~\ref{fig:lambda-m-empty} and \ref{fig:superlinear-large-L} are for zero initial condition, and Figure~\ref{fig:phase-transition-non-empty} is for non-zero initial condition.

Figures~\ref{fig:sublinear-lowerbound} and \ref{fig:superlinear-batch} show comparison between the lower bound on throughput as given by the bacth release control policy, as per Theorems~\ref{thm:main-sub-linear} and \ref{thm:main-batch-super-linear}, respectively, under a couple of representative values of maximum permissible perturbation $\eta$. In particular, Figure~\ref{fig:sublinear-lowerbound} demonstrates that the lower bound achieved from Theorem \ref{thm:main-sub-linear} increases drastically as $m \to 0^+$. Both the figures also confirm that the throughput indeed increases with increasing maximum permissible perturbation $\eta$.

It is instructive to compare Figures~\ref{fig:lambda-m-empty}(b) and \ref{fig:superlinear-batch}(a), both of which depict throughput estimates for the sub-linear case, however obtained from different methods, namely busy period distribution and batch release control policy. Accordingly, one should bear in mind that the two bounds have different qualifiers attached to them: the bound in Figure~\ref{fig:lambda-m-empty}(b) is valid probabilistically only over a finite time horizon, whereas the bound in Figure~\ref{fig:superlinear-batch}(a) is valid with probability one, although under a perturbation to the arrival process. 

 \begin{figure}[htb!]
\begin{center}
\includegraphics[width=5cm]{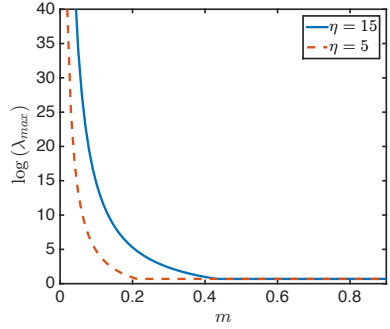} 
\caption{Theoretical estimates of throughput from Theorems~\ref{thm:main-sub-linear} for different values of $\eta$.  The parameters used for this case are: $L = 1$, $\varphi = U_{[0,L]}$,  $\psi = U_{[0,L]}$, and $w_0=0$ . Note that the vertical axis is in logarithmic scale.}
\label{fig:sublinear-lowerbound}
\end{center}
\end{figure}



\begin{figure}
\centering
\subfigure[]{\includegraphics[width=5cm]{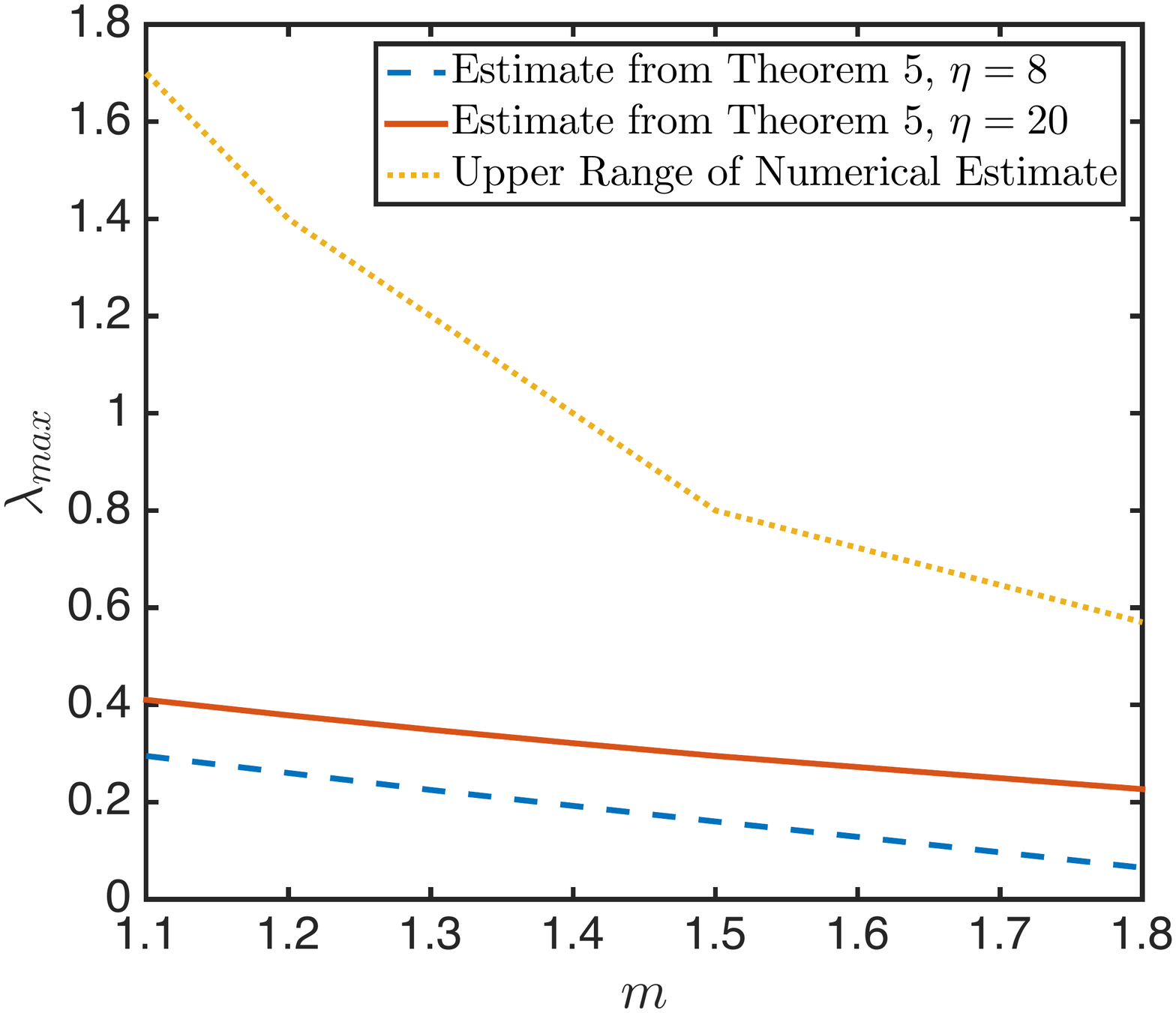}}
\centering
\hspace{0.3in}
\subfigure[]{\includegraphics[width=5cm]{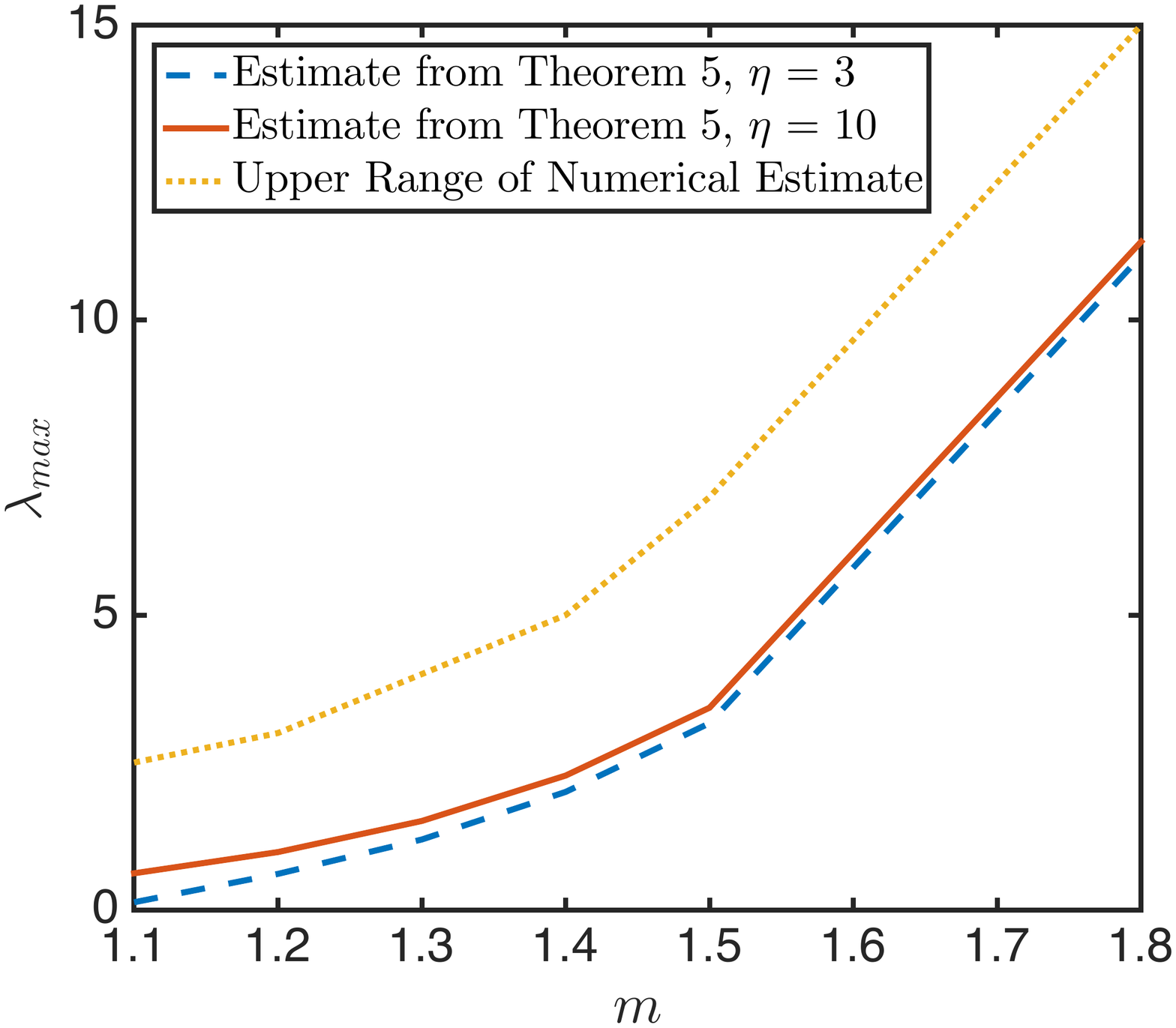}}
\caption{Theoretical estimates of throughput from Theorem \ref{thm:main-batch-super-linear}, and numerical estimates from simulations for different  The parameters used for this case are: $\varphi=U_{[0,L]}$, $\psi=U_{[0,L]}$, and (a) $L = 1$, (b) L = 100. \label{fig:superlinear-batch}}
\end{figure}

 \begin{figure}[htb!]
\begin{center}
\includegraphics[width=5cm]{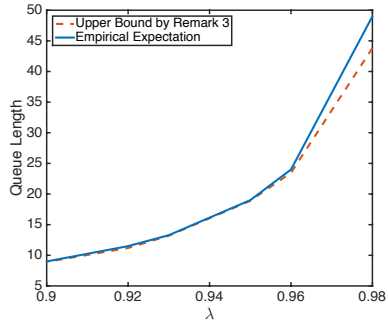} 
\caption{Comparison between the empirical expectation of the queue length and the upper bound suggested by Remark~\ref{remark:waiting-time}. We let the simulations run up to time $t=80,000$. The parameters used for this case are: $L=1$, $m=1$, $\varphi=\delta_0$, $\psi=\delta_L$. For these values, we have $\lambdamax=1$.}
\label{fig:queue-length-linear}
\end{center}
\end{figure}  

Finally, Figure~\ref{fig:queue-length-linear} shows a good agreement between queue length bound suggested by Remark~\ref{remark:waiting-time}, and the corresponding numerical estimates in the linear case.

\section{Conclusions}
\label{sec:conclusions}

In this paper, we formulated and analyzed a novel horizontal traffic queue. A key characteristic of this queue is the state dependence of its service rate. We establish useful properties of the service rate dynamics. We also extend calculations for M/G/1 busy period distributions to our setting, even for non-empty initial condition. These results allow us to provide tight results for throughput in the linear case, and probabilistic bounds on queue length over finite time horizon in the super-linear case. We also study throughput under a batch release control policy, where the additional waiting induced by the control policy is interpreted as a perturbation to the arrival process. We provide lower bound on the throughput for a maximum permissible perturbation. In particular, if the allowable perturbation is sufficiently large, then this lower bound grows unbounded as $m \to 0^+$. Simulation results suggest a sharp phase transition in the throughput as the car-following behavior transitions from super-linear to sub-linear regime. 

In future, we plan to sharpen our analysis to theoretically demonstrate the phase transition behavior. 
This could include, for example, considering other release control policies with better perturbation properties. The increasing nature of the throughput in the super-linear regime for large values of $L$, as illustrated in Figure~\ref{fig:throughput-simulations}, is possibly because the car-following model considered in this paper does not impose any explicit upper bounds on the speed of the vehicles. We plan to extend our analysis to such practical constraints, as well as to higher order, e.g., second order, car-following models, and models emerging from consideration of inter-vehicle distance beyond the vehicle immediately in front. The connections with processor sharing queue, as highlighted in this paper, suggest the possibility of utilizing the construct of measure-valued state descriptors~\cite{Grishechkin:94,Gromoll.Puha.ea:02} to derive fluid and diffusion limits of the proposed horizontal traffic queues. In particular, one could interpret the measure-valued state descriptor to play the role of traffic density in the context of traffic flow theory. Along this direction, we plan to investigate connections between the fluid limit of horizontal traffic queues, and PDE models for traffic flow. 

\bibliographystyle{plain}
\bibliography{./ksmain,./savla}

\section{Appendix}
\label{sec:appendix}

In this section, we gather a few technical results that are used in the main results of the paper. 

%

\begin{definition}[Type $K$ function]~\cite{Smith:08}
Let $g:S\mapsto\mathbb{R}^n$ be a function on $S\subset\mathbb{R}^n$. $g$ is said to be of type $K$ in $S$ if, for each $i \in \until{n}$, $g_i(z_1)\leq g_i(z_2)$ holds true for any two points $z_1$ and $z_2$ in $S$ satisfying $z_1 \leq z_2$ and $z_{1,i}=z_{2,i}$.
\end{definition}

\begin{lemma}
\label{lem:monotonicity-same-size}
Let $\map{g}{S}{\RR^N}$ and $\map{h}{S}{\RR^N}$ be both of type $K$ over $S \subset \RR^N$. Let $z_1(t)$ and $z_2(t)$
be the solutions to $\dot{z}=g(z)$ and $\dot{z}=h(z)$, respectively, starting from initial conditions $z_1(0)$ and $z_2(0)$ respectively. 
Let $S$ be positively invariant under $\dot{z}=g(z)$ and $\dot{z}=h(z)$.
If $g(z) \leq h(z)$ for all $z \in S$, and $z_1(0) \leq z_2(0)$, then $z_1(t) \leq z_2(t)$ for all $t \geq 0$. 
\end{lemma}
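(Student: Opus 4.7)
The plan is to prove this via the classical Kamke--Müller perturbation argument. For each $\epsilon>0$, introduce the perturbed trajectory $z^\epsilon(t)$ solving $\dot{z}^\epsilon = h(z^\epsilon) + \epsilon \mathbf{1}$ with $z^\epsilon(0) = z_2(0)$, defined on some right-neighborhood of $0$. I will first establish the \emph{strict} comparison $z_1(t) < z^\epsilon(t)$ componentwise on this neighborhood, and then recover the desired non-strict comparison by letting $\epsilon \to 0^+$ and invoking continuous dependence of ODE solutions on the vector field, so that $z^\epsilon \to z_2$ uniformly on any compact subinterval of $[0,\infty)$ on which $z_2$ lives in $S$.

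For the strict comparison, set $\phi(t) := z^\epsilon(t) - z_1(t)$, so $\phi(0) \geq 0$. If $\phi_i(0) = 0$ for some $i$, the type-$K$ property of $g$ applied to $z_1(0) \leq z^\epsilon(0)$ (with equality in coordinate $i$) yields $g_i(z_1(0)) \leq g_i(z^\epsilon(0)) \leq h_i(z^\epsilon(0))$, so $\dot{\phi}_i(0) \geq \epsilon > 0$. Hence $\phi(t) > 0$ componentwise on some interval $(0, \delta)$. Next, argue by contradiction: suppose the set $\{t > 0 : \phi_j(t) \leq 0 \text{ for some } j\}$ is non-empty, and let $t^*$ be its infimum. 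Then $t^* \geq \delta > 0$, and by continuity there is an index $i$ with $\phi_i(t^*) = 0$ and $\phi_i(s) > 0$ for $s$ in a left-neighborhood of $t^*$, so the left derivative satisfies $\dot{\phi}_i(t^*) \leq 0$. However, at $t^*$ one still has $z_1(t^*) \leq z^\epsilon(t^*)$ with equality in coordinate $i$, so the type-$K$ hypothesis on $g$ gives $g_i(z_1(t^*)) \leq g_i(z^\epsilon(t^*))$, and combining with the pointwise bound $g \leq h$ on $S$ yields
$$\dot{\phi}_i(t^*) = h_i(z^\epsilon(t^*)) + \epsilon - g_i(z_1(t^*)) \geq \epsilon > 0,$$
a contradiction. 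Hence $\phi(t) > 0$ on the entire interval of common existence.

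Letting $\epsilon \to 0^+$ and using continuous dependence of solutions on the right-hand side (applied to the family $\dot{z} = h(z) + \epsilon \mathbf{1}$, whose limit at $\epsilon = 0$ is $\dot{z} = h(z)$ with initial datum $z_2(0)$), one obtains $z^\epsilon(t) \to z_2(t)$ uniformly on any compact interval $[0,T]$, and the componentwise strict inequality $z_1(t) < z^\epsilon(t)$ passes to the non-strict limit $z_1(t) \leq z_2(t)$. Since $T$ is arbitrary, this holds for all $t \geq 0$.

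The main obstacle is the technical one of ensuring $z^\epsilon$ is defined on the same interval $[0,T]$ as $z_2$ and converges uniformly to $z_2$ there. If $h$ admits a locally Lipschitz extension to a neighborhood of the trajectory of $z_2$, standard Gronwall-type estimates suffice; under only continuity of $h$, one can instead use a Peano-style compactness argument, extracting a subsequential limit of $\{z^\epsilon\}$ that must coincide with $z_2$ by uniqueness of any limit lying in $S$ (or, in the non-unique case, one argues directly against every solution branch of $\dot z = h(z)$ emanating from $z_2(0)$). All the essential content of the proof is the careful bookkeeping at the ``first-touch'' time $t^*$ and the use of the type-$K$ property exactly at that contact point.
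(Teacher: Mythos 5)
Your proof is correct and follows the classical Kamke--M\"uller perturbation scheme, which is in fact a different --- and more rigorous --- route than the paper's. The paper argues directly by contradiction at a ``first crossing'' time $\tilde{t}$, at which it posits $z_1(\tilde{t}) \leq z_2(\tilde{t})$, $z_{1,k}(\tilde{t}) = z_{2,k}(\tilde{t})$, and the \emph{strict} inequality $g_k(z_1(\tilde{t})) > h_k(z_2(\tilde{t}))$, then derives a contradiction with the type-$K$ property plus $g\leq h$. The weak link in that argument is the strict inequality: the failure of $z_1\leq z_2$ only forces, at the first-touch time, the one-sided derivative condition $g_k(z_1(\tilde t)) \geq h_k(z_2(\tilde t))$ (non-strict), which does \emph{not} contradict the non-strict conclusion $g_k(z_1(\tilde t)) \leq h_k(z_2(\tilde t))$ that type-$K$ and $g\leq h$ yield. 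Equality is possible, and with only continuity (as in the textbook $\dot z = 2\sqrt{|z|}$, $g=h$, trajectories $z_1(t)=t^2$ and $z_2\equiv 0$ from the same initial point) the asserted comparison can actually fail. Your introduction of $\dot z^\epsilon = h(z^\epsilon) + \epsilon\mathbf{1}$ manufactures exactly the strict gap $\dot\phi_i(t^*)\geq\epsilon>0$ that the paper's argument lacks, and the passage $\epsilon\to 0^+$ transfers the strict comparison to the non-strict one under continuous dependence.

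One caveat worth tightening: the final limiting step requires $z^\epsilon\to z_2$, which is not automatic under mere continuity of $h$. In the non-Lipschitz case the $z^\epsilon$ converge (along subsequences) to the \emph{maximal} solution of $\dot z = h(z)$ from $z_2(0)$, so if $z_2$ is some other solution branch the argument only delivers $z_1 \leq$ (maximal solution), not $z_1\leq z_2$ --- your parenthetical ``one argues directly against every solution branch'' is too optimistic. In practice this just means the lemma needs an unstated uniqueness (or local Lipschitz) hypothesis, which the paper's applications ($y_i^m$ away from $y_i=0$) satisfy; you correctly flag this issue, and under that hypothesis your proof is complete while the paper's requires the same perturbation patch you supplied.
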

\begin{proof}
By contradiction, let $\tilde{t} \geq 0$ be the smallest time at which, there exists, say $k \in \until{N}$, such that $z_1(\tilde{t}) \leq z_2(\tilde{t})$, $z_{1,k}(\tilde{t}) = z_{2,k}(\tilde{t})$, and  
\begin{equation}
\label{eq:class-K-contradiction}
g_k(z_1(\tilde{t})) > h_k(z_2(\tilde{t})).
\end{equation}
Since $g(z)$ is of class $K$, $z_1(\tilde{t}) \leq z_2(\tilde{t})$ and $z_{1,k}(\tilde{t}) = z_{2,k}(\tilde{t})$ imply that $g(z_1(\tilde{t})) \leq g(z_2(\tilde{t}))$. This, combined with the assumption that $g(z) \leq h(z)$ for all $z \in S$ implies that $g(z_1(\tilde{t})) \leq h(z_2(\tilde{t}))$, which contradicts \eqref{eq:class-K-contradiction}.
\qed
\end{proof}

Lemma~\ref{lem:monotonicity-same-size} is relevant because the basic dynamical system in our case is of type $K$. 

\begin{lemma}
\label{lem:car-following-type-K}
For any $L > 0$, $m>0$, and $N \in \NN$, the right hand side of \eqref{eq:x-dynamics-in-Rn} is of type $K$ in $\RR_+^N$.
\end{lemma}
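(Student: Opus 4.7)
The plan is to verify the type $K$ property directly from the definition, exploiting the very local nature of how each component $\dot{x}_i$ depends on the state.

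First, I would write out the right-hand side of \eqref{eq:x-dynamics-in-Rn} explicitly as $f_i(x) = y_i^m(x)$, where by construction each $y_i$ depends only on two of the coordinates of $x$: for $i \in \until{N-1}$, one has $y_i = x_{i+1} - x_i$, while for $i = N$ the inter-vehicle distance $y_N$ involves $x_N$ and $x_1$ (through the periodic identification with an additive $L$), so $y_N$ depends only on $x_N$ and $x_1$. In particular, $f_i$ has no dependence on $x_i$ through any coordinate other than $x_i$ itself.

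Next, I would fix an index $i \in \until{N}$ and take any two states $z_1, z_2 \in \RR_+^N$ satisfying $z_1 \leq z_2$ componentwise with $z_{1,i} = z_{2,i}$. For $i \in \until{N-1}$ the componentwise inequality gives $z_{1,i+1} \leq z_{2,i+1}$, hence
\[
y_i(z_1) = z_{1,i+1} - z_{1,i} \leq z_{2,i+1} - z_{2,i} = y_i(z_2).
\]
For $i = N$ the same argument applies with the pair $(x_N, x_1)$: since $z_{1,1} \leq z_{2,1}$ and $z_{1,N} = z_{2,N}$, one obtains $y_N(z_1) \leq y_N(z_2)$.

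Finally, since $m > 0$ and the inter-vehicle distances are non-negative, the scalar map $t \mapsto t^m$ is non-decreasing on $\RR_+$, so the inequalities above yield $f_i(z_1) = y_i(z_1)^m \leq y_i(z_2)^m = f_i(z_2)$, which is exactly the type $K$ condition. There is no real obstacle here; the only point requiring a little care is the bookkeeping for $i = N$ under the lift from the circle to $\RR_+^N$, where one must check that $y_N$ still depends only on coordinates other than $x_N$ (specifically on $x_1$), so that fixing $x_N$ and increasing the remaining coordinates can only increase $y_N$ and hence $f_N$.
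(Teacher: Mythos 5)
Your proof is correct and follows essentially the same route as the paper: both compute how $y_i$ changes between the two comparable states (with $z_{1,i}=z_{2,i}$), treat the index $i=N$ separately because of the wrap-around dependence on $x_1$, and then conclude via monotonicity of $t\mapsto t^m$ on $\RR_+$. The observation that each $f_i$ depends on only two coordinates is the same locality structure the paper exploits.
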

\begin{proof}
Consider $\tilde{x}, \hat{x} \in \RR_+^N$ such that $\tilde{x} \leq \hat{x}$. 
If $\tilde{x}_i = \hat{x}_i$ for some $i \in \until{N}$, then, according to \eqref{eq:inter-vehicle-distance-Rn}, $y_i(\tilde{x})-y_i(\hat{x})= \left(\tilde{x}_{i+1} - \hat{x}_{i+1} \right) -\left(\tilde{x}_i-\hat{x}_i \right) = \tilde{x}_{i+1} - \hat{x}_{i+1}$ if $i \in \until{N-1}$, and is equal to $\left(\tilde{x}_{1} - \hat{x}_{1} \right) - \left(\tilde{x}_N-\hat{x}_N \right)= \tilde{x}_{1} - \hat{x}_{1}$ if $i=N$. In either case, $y_i(\tilde{x}) \leq y_i(\hat{x})$, which also implies $y_i^m(\tilde{x}) \leq y_i^m(\hat{x})$ for all $m > 0$.
\qed
\end{proof}


%
%

In order to state the next lemma, we need a couple of additional definitions. 

\begin{definition}[Monotone Aligned and Monotone Opposite Functions]
Two strictly monotone functions $\map{h}{\RR}{\RR}$ and $\map{g}{\RR}{\RR}$ are said to be \emph{monotone-aligned} if they are both either strictly increasing, or strictly decreasing. Similarly, the two functions are called \emph{monotone opposite} if one of them is strictly increasing, and the other is strictly decreasing. 
\end{definition}


\begin{lemma}
\label{lem:appendix-general-summation}
Let $\map{h}{\RR_+}{\RR}$ and $\map{g}{\RR_+}{\RR}$ be strictly monotone functions. Then, for every $y \in \mc S_N^L$, $n \in \NN$, $L>0$,
\begin{equation}
\label{eq:summation-general}
\sum_{i=1}^N h(y_i) \left(g(y_{i+1}) - g(y_i) \right)
\end{equation}
is non-negative if $h$ and $g$ are monotone-opposite, and is non-positive if $h$ and $g$ are monotone-aligned. Moreover, \eqref{eq:summation-general} is equal to zero if and only if $y=\frac{L}{N} \onebf$.
\end{lemma}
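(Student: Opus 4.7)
The plan is to recognize $\sum_{i=1}^N h(y_i)\bigl(g(y_{i+1}) - g(y_i)\bigr)$ as a ``rearrangement gap'' and then invoke the classical rearrangement inequality. Using the cyclic convention $y_{N+1} \equiv y_1$, rewrite
\[
\sum_{i=1}^N h(y_i)\bigl(g(y_{i+1}) - g(y_i)\bigr) \;=\; \sum_{i=1}^N h(y_i)\,g(y_{\sigma(i)}) \;-\; \sum_{i=1}^N h(y_i)\,g(y_i),
\]
where $\sigma \colon \until{N} \to \until{N}$ is the cyclic shift $i \mapsto (i \bmod N)+1$. Since $h$ and $g$ are strictly monotone, the sequences $(h(y_i))_{i=1}^N$ and $(g(y_i))_{i=1}^N$ are ordered the same way as $(y_i)$ up to possibly a reversal; hence they are similarly sorted in the monotone-aligned case and oppositely sorted in the monotone-opposite case.

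Next, applying the rearrangement inequality to the right-hand side with permutation $\sigma$ yields $\sum_i h(y_i)\,g(y_{\sigma(i)}) \le \sum_i h(y_i)\,g(y_i)$ in the aligned case (the natural pairing is the maximum), and the reverse inequality in the opposite case (it is the minimum). Substituting into the identity above gives the stated non-positivity / non-negativity of \eqref{eq:summation-general}. The trivial direction of the equality statement ($y = (L/N)\onebf$ makes every summand vanish) is immediate.

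For the nontrivial direction of the equality case, I would rely on the auxiliary identity
\[
\sum_{i=1}^N h(y_{i+1})\bigl(g(y_{i+1}) - g(y_i)\bigr) \;-\; \sum_{i=1}^N h(y_i)\bigl(g(y_{i+1}) - g(y_i)\bigr) \;=\; \sum_{i=1}^N \bigl(h(y_{i+1}) - h(y_i)\bigr)\bigl(g(y_{i+1}) - g(y_i)\bigr),
\]
whose right-hand side is \emph{termwise} non-negative in the aligned case by strict monotonicity, strictly so whenever $y_i \neq y_{i+1}$. Combining this termwise bound with the sign already established for the summation on the left (both sums are non-positive in the aligned case, by the rearrangement step applied to the second sum as well) forces $y_i = y_{i+1}$ for every $i$ when equality holds, hence $y_1 = \cdots = y_N$; the constraint $y \in \mc S_N^L$ then pins down $y = (L/N)\onebf$. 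The opposite case is handled identically after flipping signs. The main obstacle I anticipate is precisely the equality direction, since the textbook equality clause of the rearrangement inequality is somewhat delicate in the presence of ties among the $y_i$'s; the termwise argument above is intended to sidestep this by reducing everything to the transparent strict monotonicity of $h$ and $g$.
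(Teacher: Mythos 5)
Your rearrangement reformulation for the \emph{sign} of the sum is correct and gives a cleaner, more order-theoretic route to the inequality than the paper's argument: writing the sum as $\sum_i h(y_i)g(y_{\sigma(i)}) - \sum_i h(y_i)g(y_i)$ and comparing the cyclic pairing to the identity pairing yields exactly the stated non-positivity (aligned) or non-negativity (opposite). The paper instead encodes the sum as the integral of a step function $f(z)=\sum_i f_i(z)$ over the range of $g$ and establishes that $f$ is pointwise of one sign by matching, for each level $z$, the even number of intervals $I_i$ containing $z$; your approach substitutes a rearrangement inequality for that geometric pairing and is a genuine alternative for this half of the lemma.

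The equality-case argument, however, contains a sign error that leaves a real gap. In the aligned case $\sum_i h(y_i)\bigl(g(y_{i+1})-g(y_i)\bigr)$ is indeed non-positive, but the other sum $\sum_i h(y_{i+1})\bigl(g(y_{i+1})-g(y_i)\bigr)$ is non-\emph{negative}, not non-positive: re-indexing it as $\sum_j h(y_j)\bigl(g(y_j)-g(y_{j-1})\bigr)$ and applying the same rearrangement step gives $\sum_j h(y_j)g(y_j) - \sum_j h(y_j)g(y_{\sigma^{-1}(j)}) \ge 0$. (Sanity check with $N=2$, $y=(1,3)$, $h=g=\mathrm{id}$: the first sum is $3\cdot 2 + 1\cdot(-2)=4$, the second is $1\cdot 2 + 3\cdot(-2)=-4$, and the identity's right-hand side is $8 = 4-(-4)$.) With the correct signs, your auxiliary identity asserts that a non-negative quantity minus a non-positive quantity equals a termwise non-negative right-hand side; this is automatically satisfied and is perfectly compatible with the second sum vanishing while the right-hand side remains strictly positive, so nothing forces $y_i = y_{i+1}$. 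To salvage the rearrangement route you would need to invoke the equality characterization of the rearrangement inequality for the specific permutation $\sigma$ — and because $\sigma$ is a single full $N$-cycle and the sequences $h(y_i)$, $g(y_i)$ are simultaneously constant or non-constant (strict monotonicity), that characterization does force all $y_i$ to coincide — but it is a non-trivial statement that you have not supplied. The paper avoids this entirely by reading off the equality case from the pointwise pairing: each matched pair contributes zero to $\int f$ only when the two associated $y$-values agree.
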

\begin{proof}
For $i \in \until{N}$, let $I_i$ be the interval with end points $g(y_i)$ and $g(y_{i+1})$. 
For $i \in \until{N}$, let $f_i(z):=\sgn{g(y_{i+1}) - g(y_i)} h(y_i) \onebf_{I_i}(z)$. 
Let $g_{\text{min}}:=\min_{i \in \until{N}} g(y_i)$, and $g_{\text{max}}:=\max_{i \in \until{N}} g(y_i)$. With $f(z):=\sum_{i=1}^N f_i(z)$,
 \eqref{eq:summation-general} can then be written as:
\begin{equation}
\label{eq:line-integral}
\sum_{i=1}^N h(y_i) \left(g(y_{i+1}) - g(y_i) \right) = \int_{g_{\text{min}}}^{g_{\text{max}}} f(z) \, dz.
\end{equation}
We now show that, for every $z \in [g_{\text{min}}, g_{\text{max}}] \setminus \{g(y_i): i \in \until{N}\}$,  $f(z)$ is non-negative if $h$ and $g$ are monotone-opposite, and is non-positive if $h$ and $g$ are monotone-aligned. 
This, together with \eqref{eq:line-integral}, will then prove the lemma.

It is easy to see that every $z \in [g_{\text{min}}, g_{\text{max}}] \setminus \{g(y_i): i \in \until{N}\}$ belongs to an even number of intervals in $\{I_i: \, i \in \until{N}\}$, say $I_{\ell_1}, I_{\ell_2}, \ldots$, with $\ell_1 < \ell_2 < \ldots$ (see Figure \ref{fig:intervals} for an illustration).
We now show that $f_{\ell_1}(z) + f_{\ell_2}(z)$ is non-negative if $h$ and $g$ are monotone-opposite, and is non-positive if $h$ and $g$ are monotone-aligned. The same argument holds true for $f_{\ell_3}(z)+f_{\ell_4}(z), \ldots$. 
Assume that $g(y_{\ell_1}) \leq g(y_{\ell_2})$; the other case leads to the same conclusion. By definition of $f_i$'s, $f_{\ell_1}(z)=h(y_{\ell_1})$ and $f_{\ell_2}(z)=-h(y_{\ell_2})$. $g(y_{\ell_1}) \leq g(y_{\ell_2})$ implies that 
$f_{\ell_1}(z)+f_{\ell_2}(z)=h(y_{\ell_1})-h(y_{\ell_2})$ is non-negative if $h$ and $g$ are monotone-opposite, and is non-positive if $h$ and $g$ are monotone-aligned, with the equality holding true if and only if $y_{\ell_1}=y_{\ell_2}$. 
\begin{figure}[htb!]
 \centering
 \includegraphics[width=15.5cm]{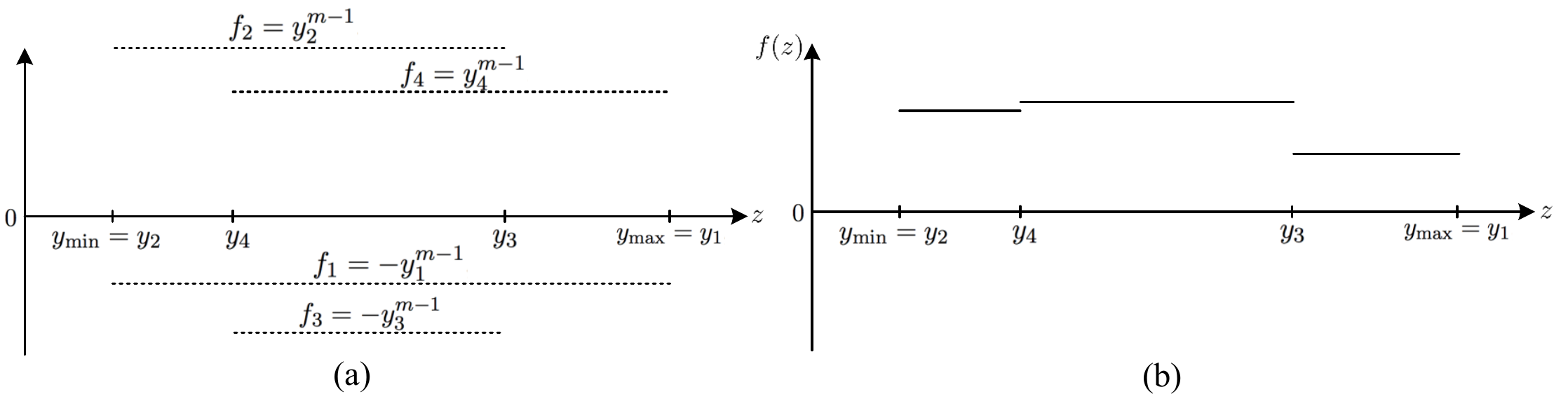} 
  \caption{A schematic view of (a) $f_i(z), i=\{1,2,3,4\}$ and (b) $f(z)=\sum_{i=1}^4f_i(z)$ for a $y\in\mc S_4^L$ ($L=1$) with $\ymin = y_2 < y_4 < y_3 < y_1 = \ymax$ for a $m<1$. }
    \label{fig:intervals}
\end{figure}
\qed
\end{proof}

%


\begin{lemma}\label{lemma:t-n}
For $n \in \NN \setminus \{1\} $, let $\psi_n$ be the n-fold convolution of $\psi \in \Psi$. Then, 
$$\int_{0}^t z \, \psi(z)\psi_{n-1}(t-z)\de z = \frac{t}{n}\psi_n(t) \qquad \forall \, t \geq 0$$
\end{lemma}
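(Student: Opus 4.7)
My plan is to establish the identity via a probabilistic symmetry argument. Let $X_1,\ldots,X_n$ be i.i.d.\ random variables with common density $\psi$, and set $S_k := \sum_{i=1}^k X_i$, so that $\psi_k$ is by construction the density of $S_k$.

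First I observe that, by conditioning on $X_1=z$ and using the independence of $X_1$ and $S_n - X_1$ (the latter having density $\psi_{n-1}$), the joint density of $(X_1,S_n)$ at the point $(z,t)$ equals $\psi(z)\psi_{n-1}(t-z)$. Consequently the left-hand side of the claimed identity equals the density-weighted conditional expectation $E[X_1\mid S_n=t]\,\psi_n(t)$. Next, I invoke the exchangeability of $(X_1,\ldots,X_n)$: the conditional expectation $E[X_i\mid S_n=t]$ takes the same value for every $i\in\until{n}$. Summing the $n$ equal quantities gives $n\,E[X_1\mid S_n=t] = E[S_n\mid S_n=t] = t$, whence $E[X_1\mid S_n=t] = t/n$, which yields the desired identity.

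The main obstacle is that densities are determined only up to sets of measure zero, so the conditional-expectation step must be handled carefully. I would sidestep this by unfolding the convolutions into an $n$-fold integral over the simplex $\{x\in\RR_+^n : \sum_i x_i = t\}$ with integrand $x_1\,\psi(x_1)\cdots\psi(x_n)$ against the natural surface measure $d\sigma$. Permuting the dummy variables shows that this integral is invariant under replacing $x_1$ by $x_i$ for any $i\in\until{n}$, so it coincides with the average $\tfrac{1}{n}\int (\sum_i x_i)\,\psi(x_1)\cdots\psi(x_n)\,d\sigma$, which, since $\sum_i x_i = t$ on the simplex, evaluates to $\tfrac{t}{n}\,\psi_n(t)$. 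An equally viable alternative is a short induction on $n$, using $\psi_n(t)=\int_0^t\psi(z)\psi_{n-1}(t-z)\,\de z$ together with a change of variable $z \mapsto t-z$ to split the integrand as in the $n=2$ warm-up; in that case the symmetry manifests directly as $\int_0^t z\,\psi(z)\psi_{n-1}(t-z)\,\de z + \int_0^t (t-z)\,\psi(z)\psi_{n-1}(t-z)\,\de z = t\,\psi_n(t)$ when combined with the inductive hypothesis.
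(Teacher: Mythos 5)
Your proposal is correct and takes essentially the same approach as the paper: both identify the left-hand side with $E[X_1 \mid S_n = t]\,\psi_n(t)$ and then use exchangeability of the i.i.d.\ summands (the paper phrases it as linearity plus identical distribution) to obtain $E[X_1 \mid S_n = t] = t/n$. The extra care you add about measure-theoretic subtleties, and the alternative simplex-integral and induction routes, go beyond the paper's treatment but do not change the underlying argument.
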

\begin{proof}
Let $J_1,\cdots,J_n$ be $n$ random variables, all with distribution $\psi$. Therefore, the probability distribution function of the random variable $V:=\sum_{i=1}^n J_i$ is $\psi_n$. 
Using linearity of the expectation, we get that 
\begin{equation*}
t=E\left[\sum_{i=1}^n J_i|V=t\right]=\sum_{i=1}^n E\left[J_i|V=t\right]=n \,  E\left[J_1|V=t \right]
\end{equation*}
%
i.e., 
\begin{equation}
\label{eq:D1-cond-sum}
E\left[J_1|V=t \right] = \frac{t}{n}
\end{equation}
Let $f_{J_1|V}(j_1|t)$ denote the probability distribution function of $J_1|V$. By definition:
\be\label{eq:f-d1-z}
f_{J_1|V}(j_1|t) = \frac{f_{J_1,V}(j_1,t)}{\psi_n(t)}=\frac{\psi(j_1)\psi_{n-1}(t-j_1)}{\psi_n(t)}
\ee
Therefore, using \eqref{eq:D1-cond-sum} and \eqref{eq:f-d1-z}, we get that
\begin{equation*}
E[J_1|V=t] = \int_0^t zf_{J_1|V}(z|t)\, \de z = \int_0^t z\frac{\psi(z)\psi_{n-1}(t-z)}{\psi_n(t)} \, \de z = \frac{t}{n}
\end{equation*}
Simple rearrangement gives the lemma. 
\qed
\end{proof}

The following is an adaptation of \cite[Lemma 2.3.4]{Ross:96}. 
\begin{lemma}\label{lemma:busy-period-type1-prob}
Let $a_1, \cdots, a_{n-1}$ denote the ordered values from a set of $n-1$ independent uniform $(0,t)$ random variables. Let $\tilde d_0=z \geq 0$ be a constant and $\tilde d_1, \tilde d_2, \cdots \tilde d_{n-1}$ be i.i.d. non-negative random variables that are also independent of $\{a_1, \cdots, a_{n-1}\}$, then 
\begin{align*}
\Pr(\tilde d_{k}+ \cdots +\tilde d_{n-1}\leq a_{n-k},k=1,\cdots,n-1 |\tilde d_0+ \cdots +\tilde d_{n-1}=t, \tilde{d}_0=z)
 = \begin{cases} z/t & z<t \\
                          0      & \text{otherwise}
    \end{cases}
\end{align*}
\end{lemma}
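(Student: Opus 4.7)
My proof plan is induction on $n$, with the key step being a conditioning argument on the number of uniforms lying in the interval $(t-z,t)$.

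The base case $n=2$ is immediate: the conditioning forces $\tilde d_1 = t-z$, so the probability is $\Pr(a_1 \geq t-z) = z/t$. For the induction step I let $K$ denote the number of the $n-1$ uniforms falling in $(t-z,t)$, so $K \sim \mathrm{Binomial}(n-1,\,z/t)$ independent of the $\tilde d_i$'s. If $K=0$ then the $k=1$ inequality $a_{n-1}\geq t-z$ fails and the conditional probability is zero. If $K=k\geq 1$, the $k$ largest uniforms exceed $t-z$ and therefore automatically satisfy the inequalities for $k'=1,\ldots,k$ (since each tail sum $\tilde d_{k'}+\cdots+\tilde d_{n-1}$ is at most $t-z$), while the $m:=n-1-k$ smallest uniforms $b_1<\cdots<b_m$ are distributed as order statistics of $m$ i.i.d.\ $\mathrm{Uniform}(0,t-z)$ variables.

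The crucial observation is that the residual inequalities on the $b_i$'s match the lemma's own statement at the smaller size $n' := n-k$: setting $\tilde d'_0 := \tilde d_1 + \cdots + \tilde d_k$ and $\tilde d'_j := \tilde d_{k+j}$ for $j=1,\ldots,m$, one has $\sum_{j=0}^{m}\tilde d'_j = t-z$ with $\tilde d'_1,\ldots,\tilde d'_m$ i.i.d.\ and independent of $\tilde d'_0$; after re-indexing, the residual event reads $\tilde d'_j + \cdots + \tilde d'_m \leq b_{m-j+1}$ for $j=1,\ldots,m$, which is exactly the lemma's event with parameters $(n',t') = (n-k,\,t-z)$ and deterministic first coordinate $w := \tilde d'_0$. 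The inductive hypothesis then gives the conditional probability $w/(t-z)$. Averaging over $w$ using the exchangeability of $\tilde d_1,\ldots,\tilde d_{n-1}$ conditioned on their sum $t-z$ (each having conditional mean $(t-z)/(n-1)$) yields $E[w\mid\cdot] = k(t-z)/(n-1)$, hence $\Pr(E\mid K=k) = k/(n-1)$.

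Putting it together,
\[
\Pr(E) \;=\; \sum_{k=1}^{n-1}\binom{n-1}{k}\!\left(\tfrac{z}{t}\right)^{\!k}\!\!\left(1-\tfrac{z}{t}\right)^{\!n-1-k}\!\frac{k}{n-1} \;=\; \frac{z}{t},
\]
where the last identity uses $k\binom{n-1}{k} = (n-1)\binom{n-2}{k-1}$ and the binomial theorem. The only nontrivial step in the plan is the re-indexing that identifies the residual event with an instance of the lemma at reduced size $n-k$; once this identification is made, the induction closes cleanly, while the case $z \geq t$ is degenerate (the conditioning event has probability zero under the standing continuity assumption on the $\tilde d_i$'s).
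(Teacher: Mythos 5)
The paper itself gives no proof of this lemma, only a citation to Ross's Lemma 2.3.4, calling the present statement an ``adaptation'' of it (Ross proves the symmetric case where $\tilde d_0$ is a random variable with the same distribution as the others, yielding $1/n$). Your proof is a complete, self-contained argument for the asymmetric version, and it is correct. The main structure---condition on the number $K$ of uniforms landing in $(t-z,t)$, observe that when $K=k$ the inequalities for $k'\le k$ are automatic and the residual inequalities on the $m=n-1-k$ smaller order statistics constitute an instance of the lemma at size $n-k$ on the interval $(0,t-z)$ with first coordinate $W:=\tilde d_1+\cdots+\tilde d_k$, apply the inductive hypothesis to get $W/(t-z)$, average using exchangeability to obtain $k/(n-1)$, then sum the binomial series---all checks out, including the independence of $K$ (a function of the uniforms) from $W$ (a function of the $\tilde d$'s), which justifies taking $E[W\mid K=k,\text{sum}]=E[W\mid\text{sum}]$. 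This is in fact essentially the same inductive conditioning device Ross uses, so your argument supplies the proof the paper delegates to the reference.

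One minor caveat worth flagging: when you invoke the inductive hypothesis at the reduced instance, the collapsed coordinate $W$ can equal $t-z$ with positive probability if the $\tilde d_i$ have an atom at $0$; in that boundary case the residual event holds trivially (probability $1$), which agrees with $W/(t-z)=1$ but not with the ``$0$ otherwise'' branch as literally written in the lemma. You correctly note this is moot under a continuity assumption on the $\tilde d_i$, but if one wants the lemma to hold verbatim for arbitrary nonnegative i.i.d.\ $\tilde d_i$, the case split in the statement should read $z/t$ for $z\le t$ rather than for $z<t$; the same correction then propagates cleanly through the induction. This is an imprecision in the lemma as stated rather than a gap in your argument.
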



\end{document}